\def\thanks#1{\protected@xdef\@thanks{\@thanks
        \protect\footnotetext{#1}}}
\newtheorem{theorem}{Theorem}[section]
\newtheorem{lemma}{Lemma}[section]
\theoremstyle{definition}
\newtheorem{definition}{Definition}[section]
\theoremstyle{remark}
\newtheorem{remark}{Remark}[section]
\newenvironment{proof}[1][Proof]{\noindent\textbf{#1.} }
\begin{document}
\title{\textbf{Incomplete Information Linear-Quadratic Mean-Field Games and Related Riccati Equations}
\thanks{This work was supported by the National Key R\&D Program of China (No. 2022YFA1006104), the National Natural Science Foundation of China (Nos. 12022108, 11971267, 12001320, 11831010, 61961160732, 61977043), the China Postdoctoral Science Foundation (No. 2023M732085), the Natural Science Foundation of Shandong Province (Nos. ZR2022JQ01, ZR2019ZD42, ZR2020ZD24), the Taishan Scholars Young Program of Shandong (No. TSQN202211032), the Distinguished Young Scholars Program and the Young Scholars Program of Shandong University. (\emph{Corresponding author: Ke Yan}.)}
}

\author{Min Li,~ Tianyang Nie, ~Shunjun Wang ~and~ Ke Yan \thanks{Min Li is with the School of Mathematics, Shandong University, Jinan 250100, China, and also with the Geotechnical and Structural Engineering Research Center, Shandong University, Jinan 250061, China (e-mail: lim@sdu.edu.cn).} \thanks{Tianyang Nie and Ke Yan are with the School of Mathematics, Shandong University, Jinan 250100, China (e-mail: nietianyang@sdu.edu.cn; 201812092@mail.sdu.edu.cn).}
\thanks{Shujun Wang is with the School of Management, Shandong University, Jinan 250100, China (e-mail: wangshujun@sdu.edu.cn).}
}

\date{}

\maketitle

\begin{abstract}
We study a class of linear-quadratic mean-field games  with incomplete information. For each agent, the state is given by a linear forward stochastic differential equation with common noise. Moreover, both the state and control variables can enter the diffusion coefficients of the state equation.
We deduce the open-loop adapted decentralized strategies and feedback decentralized strategies by mean-field forward-backward stochastic differential equation  and Riccati equations, respectively. The well-posedness of the corresponding consistency condition system is obtained and the limiting state-average turns out to be the solution of a mean-field stochastic differential equation driven by common noise.
We also verify the $\varepsilon$-Nash equilibrium property of the decentralized control strategies. Finally, a network security problem is studied to illustrate our results as an application.
 \end{abstract}

 \textbf{Keywords:}
Common noise, forward-backward stochastic differential equation, mean-field game, incomplete information, Riccati equation, $\varepsilon$-Nash equilibrium

\section{Introduction}
\label{sec:introduction}
The present work of mean-field games (MFGs) were introduced by Lasry and Lions \cite{LL2007} and simultaneously by Huang, Malham\'e and Caines \cite{HMC2006}.
The asymptotic Nash equilibrium for stochastic differential games with infinite number of agents subject to a mean-field interaction when the number of agents goes to infinity leads to the theory of MFGs.
In recent years, it has widely application in reality, such as finance, economics and engineering, etc.
 The interested readers can refer to can refer to \cite{BP2014,HHL2018,HHN2018,MB2017,WZ2017} for linear-quadratic (LQ) MFGs, and refer to \cite{BFY2013,BDLP2009,C2010,CD2013} for further analysis of MFGs and related topics.

In the real world, usually agents can only get incomplete information at most cases.
Wang, Wu, and Xiong \cite{WWX2018} studied the optimal control problems for forward-backward stochastic differential equation (FBSDE) with incomplete information and introduced a backward separation idea to overcome the difficulty arising from an LQ optimal control problem.
Recently, the large population problems with incomplete information are extensively studied. For example, Huang, Caines and Malham\'e \cite{HCM2006} investigated dynamic games in a large population of stochastic agents which have local noisy measurements of its own state.
Huang, Wang and Wu \cite{HWW2016} studied the backward mean-field linear-quadratic-Gaussian (LQG) games of weakly coupled stochastic large population system with full and partial information. \c{S}en and Caines \cite{SC2019} considered the nonlinear MFGs where an individual agent has noisy observation on its own state. Furthermore, some literature can be found in \cite{BCL2017,BLM2017,LF2019,WXX2017,WZZ2013}
 for the mean-filed type control problems with incomplete information.

The above mentioned literatures about MFGs for stochastic large population systems are independent of the random factors in each agent's state process. However, many models in applications do not satisfy this assumption. For example, the financial market models often consider some common market noise affecting the agents.  In \cite{CFS2015}, Carmona, Fouque and Sun  gave an explicit example of a LQ mean-field model with common noise used to interbank lending and borrowing. The presence of common noise which means the fact that they are influenced by common information e.g. public data. Consequently, the model of MFGs with common noise is
a general setting in reality and have attracted significant attentions recently.

In our paper, we consider $N$-player game models in which individual agents are subjected to two independent sources of noise: an idiosyncratic noise, independent from one individual to another, and a homogenous one, identical to all the players, accounting for the common environment in which the individual states evolve. This is mainly due to the fact that the states of the individual agents are subjected to correlated noise term.

Let us recall the literatures on MFGs with common noise. Carmona, Delarue and Lacker \cite{CDL2016} discussed the existence and uniqueness of an equilibrium in the presence of a common noise, see also \cite{CD2018}, Volume \uppercase\expandafter{\romannumeral2}, Chapter 3.
Using PDE approach, Cardaliaguet, Delarue, Lasry and Lions \cite{CDLL2019} studied the master equation and convergence of MFGs with common noise. Tchuendom \cite{T2018} showed that a common noise may restore uniqueness in LQ MFGs. Bayraktar, Cecchin, Cohen and Delaure \cite{BCCD2022} focused on finite state MFGs with common noise. Mou and Zhang \cite{MZ2020} considered non-smooth data MFGs and global well-posedness of master equations with common noise. Li, Mou, Wu, Zhou \cite{LMWZ2022} discussed a class of LQ mean field games of  controls with common noise, which proved the global well-posedness of the corresponding master equation without any monotonicity conditions.
The literatures related to our work mainly include \cite{BFH2021,HY2021,HW2016}. Huang and Wang \cite{HW2016} studied the dynamic optimization of large population system with partial information and common noise. Bensoussan, Feng and Huang \cite{BFH2021} considered a class of LQG MFGs with partial observation structure for individual agents and the dynamic of individuals are driven by common noise.
Huang and Yang \cite{HY2021} investigated asymptotic solvability of a LQ mean-field social optimization problem with controlled diffusions and indefinite state and control weights, where the state is driven by the individual noise and common noise.

The contributions of our paper are the following.
\begin{itemize}
\item Firstly, we assume the states of all agents are governed by some underlying common noise, so the individual agents are not independent of each other. Compared with our recent paper \cite{LNW2022}, where the state of all agents depend on two independent noises, it leads to the state-average limit is deterministic. In current paper, with the help of conditional expectation, the presence of common noise makes the state-average limit in MFGs analysis be some stochastic process instead of deterministic process, i.e., the dynamic of the limit of state-average satisfies a stochastic differential equation (SDE) driven by the common noise, see Remark \ref{remark for m} and \eqref{m-1} for more details.
\item Secondly, we study LQ MFGs with common noise, in which the state and the control variable can both enter the diffusion coefficients in front of  individual noise and common noise for individual state. Compared with \cite{HW2016}, the diffusion terms in  \cite{HW2016} are independent of the state and control variables. Our model is more complicated than the one in \cite{HW2016}.  Due to the dependence of control variables of  diffusion coefficients, our Riccati equations are no longer in standard form. Thus, there arise difficulties when we represent the optimal feedback control strategies of open-loop adapted policies via some Riccati equations.
\item Thirdly, we introduce two Riccati equations to decompose the consistency condition system of our LQ MFGs. One Riccati equation is introduced due to the mean-field interaction, see \eqref{Gamma-1}. Another Riccati equation, see \eqref{P-1}, is no longer in standard form as in Yong and Zhou \cite{YZ1999}, since both the diffusion coefficients in front of individual noise and common noise depend on the control variable.  We are successful to establish the well-posedness of these two Riccati equations under suitable assumptions, see Lemma \ref{Plemma}, Theorem \ref{RCCtheorem} and  Theorem \ref{RCCtheorem Gamma}.
\item Finally, we apply the results to solving a network security problem. An explicit form of the unique approximate Nash equilibrium is obtained through Riccati equations.
\end{itemize}

The rest of the paper is organized as follows. In Section \ref{sec2}, we formulate the LQ MFG problem with incomplete information.
Moreover, we introduce the Riccati equations to decompose the consistency condition system and verify the $\varepsilon$-Nash equilibrium of the control strategies.
Section \ref{sec3} establishes the well-posedness of this new kind of Riccati equation and the strategies are given explicitly by solutions of Riccati equations.
In Section \ref{sec4}, we give an example for the network security model.

\section{Incomplete Information LQ MFGs}\label{sec2}
For fixed $T >0$, we consider a finite time interval $[0,T]$. Let $(\Omega,\mathcal{F},\mathbb{F},\mathbb{P})$ be a complete filtered probability space satisfying the usual conditions on which a standard $(1+N)$-dimensional Brownian motion $\{W_0(t),W_i(t), 1\leq i \leq N\}_{0\leq t\leq T}$ is defined.
Here, $W_i$ is the individual noise while $W_0$ is the common noise due to underlying common factors. Let $\mathbb{F}=\{\mathcal{F}_t\}_{0\leq t\leq T}$ be the natural filtration with $\mathcal{F}_t:=\sigma\{W_i(t),0\leq i \leq N, 0\leq t \leq T\} \vee \mathcal{N}_{\mathbb{P}}$ (where $\mathcal{N}_{\mathbb{P}}$ is the class of $\mathbb{P}$-null sets of $\mathcal{F}_t$).
Define $\mathcal{F}_t^{W_0}:=\sigma\{W_0(t),0\leq t \leq T\}\vee \mathcal{N}_{\mathbb{P}}$,
$\mathcal{F}_t^{W_i}:=\sigma\{W_i(t),0\leq t \leq T\}\vee \mathcal{N}_{\mathbb{P}}$,
$\mathcal{F}_t^{i}:=\sigma\{W_0(t),W_i(t),0\leq t \leq T\}\vee \mathcal{N}_{\mathbb{P}}$ and $\mathcal{G}_t:=\sigma\{W_i(t),1\leq i \leq N, 0\leq t \leq T\}\vee \mathcal{N}_{\mathbb{P}}$.
In our setting, we denote by
$\{\mathcal{F}_t^{W_0}\}_{0\leq t\leq T}$ the common information taking effects on all agents;
$\{\mathcal{F}_t^{W_i}\}_{0\leq t\leq T}$ the individual information of the $i$-th agent;
$\{\mathcal{F}_t^{i}\}_{0\leq t\leq T}$ the full information of the $i$-th agent;
$\{\mathcal{F}_t\}_{0\leq t\leq T}$ denotes the complete information of the system.
In our paper, the $i$-th agent cannot access the information of other agents and can only observe its individual noise $W_i(\cdot)$.

 Throughout the paper, $\mathbb{R}^{n}$ denotes the $n$-dimensional Euclidean space with its norm and inner product denoted by $|\cdot|$ and $\langle \cdot, \cdot \rangle$, respectively. For a given vector or matrix $M$, let $M^{\top}$ stand for its transpose. We denote the set of symmetric $n\times n$ (resp. positive semi definite) matrices with real elements by $\mathcal{S}^n$ ($\mathcal{S}_{+}^n$). If $M\in \mathcal{S}^n$ is positive (semi) definite, we write $M>(\geq)0$.
 For positive constant $k$, if $M\in \mathcal{S}^n$ and $M > kI$, we denote $M\gg0$.
 For a given Hilbert space $\mathcal{H}$ and a filtration $\{\mathcal{F}_t\}_{0\leq t \leq T}$, let $L^2_\mathcal{F}(0,T;\mathcal{H})$ denote the space of all $\mathcal{F}_t$-progressively measurable processes $g(\cdot)$ satisfying $\mathbb{E}\int_{0}^{T}|g(t)|^2dt<+\infty$; $L^2(0,T;\mathcal{H})$ denotes the space of all deterministic functions $g(\cdot)$ satisfying $\int_{0}^{T}|g(t)|^2dt<+\infty$; $L^{\infty}(0,T;\mathcal{H})$ denotes the space of uniformly bounded functions;
 $C([0,T];\mathcal{H})$ denotes the space of continuous functions.

Now, we consider a large population system with $N$ individual agents $\{\mathcal{A}_i\}_{1\leq i \leq N}$. The state $x_i(\cdot)$ for $i$-th agent $\mathcal{A}_i$ is given by the following linear SDE
\begin{equation}\label{state}
\left\{
\begin{aligned}
dx_{i}(t)=&[A(t)x_{i}(t)+B(t)u_{i}(t)+\alpha(t)x^{(N)}(t)+b(t)]dt \\
&+[C(t)x_{i}(t)+D(t)u_{i}(t)+\beta(t)x^{(N)}(t)+\sigma (t)]dW_{i}(t) \\
&+[C_0(t)x_{i}(t)+D_0(t)u_{i}(t)+\beta_0(t)x^{(N)}(t)+\sigma_0(t)]dW_{0 }(t), \\
x_{i}(0)=&x,
\end{aligned}
\right.
\end{equation}
where $x\in \mathbb{R}^n$ and $x^{(N)}(\cdot):=\frac{1}{N}\sum_{i=1}^N x_{i}(\cdot)$ denotes the state-average of population.
The corresponding coefficients $A(\cdot),B(\cdot),\alpha(\cdot),b(\cdot),C(\cdot),D(\cdot),\beta(\cdot),\sigma(\cdot), C_0(\cdot),D_0(\cdot),\beta_0(\cdot),$ $\sigma_0(\cdot)$ are some deterministic matrix-valued functions with appropriate dimensions.

For $1\leq i\leq N$, the centralized admissible control set $\mathcal{U}_{ad}^c$ is defined as
\begin{equation*}
  \mathcal{U}_{ad}^c:=\{u_i(\cdot)|u_i(\cdot)\in L^2_{\mathcal{G}_t}(0,T;\mathbb{R}^k)\}.
\end{equation*}
Moreover, for $1\leq i\leq N$, the decentralized admissible control set $\mathcal{U}_i$ is defined as
\begin{equation*}
  \mathcal{U}_i:=\{u_i(\cdot)|u_i(\cdot)\in L^2_{\mathcal{F}_t^{W_i}}(0,T;\mathbb{R}^k)\}.
\end{equation*}

Let $u(\cdot)=(u_1(\cdot),\ldots,u_N(\cdot))$ be the set of control strategies of all agents and $u_{-i}(\cdot)=(u_1(\cdot),\ldots,u_{i-1}(\cdot),u_{i+1}(\cdot),\ldots,u_N(\cdot))$ be the set of control strategies except for $i$-th agent $\mathcal{A}_i$.
The cost functional of $\mathcal{A}_i$ is given by
\begin{equation}\label{cost}
\begin{aligned}
&\mathcal{J}_{i}(u_{i}(\cdot ),u_{-i}(\cdot ))\\
&=\frac{1}{2}\mathbb{E}\Big\{\int_{0}^{T}\big[\langle
Q(t)(x_{i}(t)-x^{(N)}(t)),x_{i}(t)-x^{(N)}(t)\rangle+\langle R(t)u_{i}(t),u_{i}(t)\rangle \big]dt+\langle Gx_{i}(T),x_{i}(T)\rangle \Big\}.
\end{aligned}
\end{equation}

Moreover, we introduce the following assumptions of coefficients.

\begin{enumerate}
  \item[(A1)] 
$
A(\cdot),\alpha(\cdot),C(\cdot),\beta(\cdot),C_0(\cdot),\beta_0(\cdot) \in L^{\infty }(0,T;\mathbb{R}^{n\times n});\\
b(\cdot),\sigma(\cdot),\sigma_0(\cdot) \in L^{\infty}(0,T;\mathbb{R}^{n});\\
B(\cdot),D(\cdot),D_0(\cdot) \in L^{\infty }(0,T;\mathbb{R}^{n\times k});$
  \item[(A2)]
 $ Q(\cdot)\in L^{\infty }(0,T;\mathcal{S}^{n}),Q(\cdot)\geq 0,
 R\in L^{\infty }(0,T;\mathcal{S}^{k}),\\
 R(\cdot)\gg0,
 G\in \mathcal{S}^{n}, G\geq 0.$
\end{enumerate}

We mention that under assumptions \textup{(A1)}, the system of SDE \eqref{state} admits a unique solution. Under  assumptions \textup{(A2)}, the cost functional \eqref{cost} is well-defined.

Now, we formulate the following incomplete information large population problem and aim to find Nash equilibrium.\\
\textbf{Problem (\uppercase\expandafter{\romannumeral1}).}
For $1 \leq i \leq N$, finding the control strategy set $\bar{u}(\cdot)=(\bar{u}_{1}(\cdot),\ldots ,\bar{u}_{N}(\cdot))$ such that
\begin{equation*}
\mathcal{J}_{i}(\bar{u}_{i}(\cdot ),\bar{u}_{-i}(\cdot ))=\underset{
u_{i}\left( \cdot \right) \in \mathcal{U}_{ad}^c}{\inf }\mathcal{J}
_{i}(u_{i}(\cdot ),\bar{u}_{-i}(\cdot )),
\end{equation*}
where $\bar{u}_{-i}(\cdot)=(\bar{u}_1(\cdot),\ldots,\bar{u}_{i-1}(\cdot),\bar{u}_{i+1}(\cdot),\ldots,\bar{u}_N(\cdot))$.
For $1\leq i\leq N$, we  call $\bar{u}_i(\cdot)$ the Nash equilibrium of \textbf{Problem (\uppercase\expandafter{\romannumeral1})}. Moreover, the corresponding state $\bar{x}_i(\cdot)$ is called the optimal centralized trajectory.

Due to the coupling state-average $x^{(N)}(\cdot)$, it is complicated to study \textbf{Problem (\uppercase\expandafter{\romannumeral1})}. We will use MFG theory to seek the approximate Nash equilibrium, which bridges the ``centralized'' LQ games to the limiting state-average, as the number of agents $N$ tends to infinity. It is usually to replace the state-average by its frozen limit term.

As $N \rightarrow +\infty$, suppose $\bar{x}^{(N)}(\cdot)=\frac{1}{N}\sum_{i=1}^{N}\bar{x}_i(\cdot)$ is approximated by some processes $m(\cdot)$ which will be determined later by the consistency condition system.
We introduce the following auxiliary state for $\mathcal{A}_i$
\begin{equation}\label{z-lstate}
\left\{
\begin{aligned}
dz_{i}(t)=&[A(t)z_{i}(t)+B(t)u_{i}(t)+\alpha(t)m(t)+b(t)]dt \\
&+[C(t)z_{i}(t)+D(t)u_{i}(t)+\beta(t)m(t)+\sigma(t)]dW_{i}(t) \\
&+[C_0(t)z_{i}(t)+D_0(t)u_{i}(t)+\beta_0(t)m(t)+\sigma_0(t)]dW_{0}(t), \\
z_{i}(0)=&~x,
\end{aligned}%
\right.
\end{equation}
and the limiting cost functional
\begin{equation}\label{lcost}
\begin{aligned}
J_i(u_{i}(\cdot )) =&\frac{1}{2}\mathbb{E}\Big\{\int_{0}^{T}\big[\langle
Q(t)(z_{i}(t)-m(t)),z_{i}(t)-m(t)\rangle +\langle
R(t)u_{i}(t),u_{i}(t)\rangle \big]dt +\langle Gz_{i}(T),z_{i}(T)\rangle \Big\}.
\end{aligned}
\end{equation}
Then the corresponding auxiliary limiting problem with incomplete information is proposed as follows.

Now, we formulate the following limiting stochastic control problem with incomplete information.\\
\textbf{Problem (\uppercase\expandafter{\romannumeral2}).}
For $1\leq i\leq N$,
find $\bar{u}_i(\cdot)\in \mathcal{U}_{i}$ satisfying
\begin{equation*}
J_{i}(\bar{u}_{i}(\cdot ))=\underset{
u_{i}\left( \cdot \right) \in \mathcal{U}_{i}}{\inf }J
_{i}(u_{i}(\cdot )).
\end{equation*}
Then $\bar{u}_i(\cdot)$ is called the decentralized optimal control for \textbf{Problem (\uppercase\expandafter{\romannumeral2})}. Moreover, the corresponding state $\bar{z}_i(\cdot)$ is called the decentralized optimal trajectory.

\begin{remark}\label{remark for m}
Compared with \cite{LNW2022}, where the state-average limit is deterministic, the counterpart $m(\cdot)$ in our setting turns out to be a \emph{stochastic process} which is $\mathcal{F}_t^{W_0}$-adapted. In fact, we will see later that $m(\cdot)$ solves a SDE driven by the common noise, see \eqref{m-1}, which yields that $m(\cdot)\in L^2_{\mathcal{F}^{W_0}}(0,T;\mathbb{R}^n)$.
\end{remark}
\subsection{Open-loop decentralized strategies}\label{sec2.1}
Noting that with the help of the frozen limiting state-average, the \textbf{Problem (\uppercase\expandafter{\romannumeral2})} is essentially a LQ stochastic control problem with incomplete information.
To this end, we will apply the stochastic maximum principle with partial information to obtain optimal control.

Let us define the Hamiltonian function $H:[0,T]\times\mathbb{R}^n\times\mathbb{R}^n\times\mathbb{R}^n\times\mathbb{R}^n\times \mathbb{R}^m\rightarrow\mathbb{R}$ as
\begin{equation}\label{Hamiltonian}
\begin{aligned}
&H(t,p_{i},q_{i},q_{i,0},z_{i},u_{i}) \\
=&\langle
p_{i},Az_{i}+Bu_{i}+\alpha m+b\rangle +\langle q_{i},Cz_{i}+Du_{i}+\beta m+\sigma
\rangle \\
&+\langle q_{i,0},C_0z_{i}+D_0u_{i}+
\beta_0m+\sigma_0\rangle -\frac{1}{2}\langle Q(z_{i}-m),z_{i}-m\rangle -\frac{1}{2}\langle Ru_{i},u_{i}\rangle.
\end{aligned}
\end{equation}
and we introduce the following adjoint equation
\begin{equation}\label{adjoint}
\left\{
\begin{aligned}
dp_{i}(t)=&-\Big[A^{\top }(t)p_{i}(t)+C^{\top}(t)q_{i}\left( t\right) +
C_0^{\top }(t)q_{i,0}\left( t\right)-Q(t)\big(z_i(t) -m(t)\big)\Big]dt\\
 &+q_{i}\left( t\right) dW_{i}(t)+q_{i,0}\left( t\right) dW_{0}(t), \\
p_{i}(T)=&-Gz_{i}\left( T\right).
\end{aligned}
\right.
\end{equation}
Note that the solution $(p_i(\cdot),q_i(\cdot),q_{i,0}(\cdot))$  of \eqref{adjoint} belongs to the space $L^2_{\mathcal{F}^i}(0,T;\mathbb{R}^n)\times L^2_{\mathcal{F}^i}(0,T;\mathbb{R}^n)\times L^2_{\mathcal{F}^i}(0,T;\mathbb{R}^n)$.

By applying the stochastic maximum principle, we can obtain the open-loop decentralized strategies for \textbf{Problem (\uppercase\expandafter{\romannumeral2})}.

\begin{theorem}
For $1\leq i\leq N$, let \textup{(A1)}-\textup{(A2)} hold. Then the decentralized optimal control of \textbf{Problem (\uppercase\expandafter{\romannumeral2})} is given by
\begin{equation}\label{gcontrol}
\begin{aligned}
\bar{u}_{i}(t)&=R(t)^{-1}\big(B^{\top }(t)\mathbb{E}[\bar{p}
_{i}(t)|\mathcal{F}_t^{W_i}]+D^{\top }(t)\mathbb{E}[\bar{q}_{i}(t)|\mathcal{F}_t^{W_i}]+D_0^{\top }(t)\mathbb{E}[\bar{q}_{i,0}(t)|\mathcal{F}_t^{W_i}]\big),
\end{aligned}
\end{equation}
where $(\bar{z}_i(\cdot),\bar{p}_i(\cdot),\bar{q}_i(\cdot),\bar{q}_{i,0}(\cdot))$ satisfy the following stochastic Hamiltonian system
\begin{equation}\label{H system}
\left\{
\begin{aligned}
d\bar{z}_{i}(t)=&\Big\{A(t)\bar{z}_{i}(t)+B(t)R^{-1}(t)(B^{\top }(t)\mathbb{E}[\bar{p}_{i}(t)|\mathcal{F}_t^{W_i}]+D^{\top }(t)\mathbb{E}[\bar{q}_{i}(t)|\mathcal{F}_t^{W_i}]\\
&+D_0^{\top}(t)\mathbb{E}[\bar{q}_{i,0}(t)|\mathcal{F}_t^{W_i}])+\alpha(t) m(t)+b(t)\Big\}dt\\
&+\Big\{C(t)\bar{z}_{i}(t)+D(t)R^{-1}(t)(B^{\top }(t)\mathbb{E}[\bar{p}_{i}(t)|
\mathcal{F}_t^{W_i}]+D^{\top }(t)\mathbb{E}[\bar{q}_{i}(t)|\mathcal{F}_t^{W_i}]\\
&+D_0^{\top }(t)\mathbb{E}[\bar{q}_{i,0}(t)|\mathcal{F}_t^{W_i}])+\beta(t)m(t)+\sigma (t)\Big\}dW_{i}(t)\\
&+\Big\{C_0(t)\bar{z}_{i}(t)+D_0(t)R^{-1}(t)(B^{\top }(t)\mathbb{E}[\bar{p
}_{i}(t)|\mathcal{F}_t^{W_i}]+D^{\top }(t)\mathbb{E}[\bar{q}_{i}(t)|
\mathcal{F}_t^{W_i}]\\
&+D_0^{\top }(t)\mathbb{E}[\bar{q}_{i,0}(t)|\mathcal{F}_t^{W_i}])+\beta_0(t)m(t)+\sigma_0(t)\Big\}dW_{0}(t), \\
d\bar{p}_{i}(t)=&-\Big[A^{\top }(t)\bar{p}_{i}(t)+C^{\top}(t)\bar{q}_{i}\left( t\right) +
C_0^{\top }(t)\bar{q}_{i,0}\left( t\right) \\
&-Q(t)\big(\bar{z}_i(t)-m(t)\big)\Big]dt +\bar{q}_{i}\left( t\right) dW_{i}(t)+\bar{q}_{i,0}\left( t\right) dW_{0}(t), \\
\bar{z}_{i}(0)=&x,\quad \bar{p}_{i}(T)=-G\bar{z}_{i}\left( T\right).
\end{aligned}
\right.
\end{equation}
\end{theorem}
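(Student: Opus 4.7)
The plan is to derive the optimality condition via a partial-information stochastic maximum principle of Pontryagin type, exploiting the linear-convex structure of \textbf{Problem (\uppercase\expandafter{\romannumeral2})}. Since $Q\geq 0$, $R\gg 0$, and $G\geq 0$, the cost functional $J_i$ is strictly convex in $u_i(\cdot)\in\mathcal{U}_i$, so a first-order variational condition is both necessary and sufficient for optimality; no second-order spike variation is needed.

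First I would fix a candidate optimal pair $(\bar u_i(\cdot),\bar z_i(\cdot))$, pick an arbitrary admissible perturbation $v(\cdot)\in L^2_{\mathcal{F}^{W_i}}(0,T;\mathbb{R}^k)$, and form $\bar u_i+\varepsilon v$. A standard convex-variation computation delivers the linear variational SDE for $\delta z_i:=\partial_\varepsilon z_i^\varepsilon\big|_{\varepsilon=0}$, with coefficients $A,C,C_0$ acting on $\delta z_i$ and $B,D,D_0$ acting on $v$. Differentiating $J_i(\bar u_i+\varepsilon v)$ at $\varepsilon=0$ produces
\[
\mathbb{E}\int_0^T\!\big[\langle Q(\bar z_i-m),\delta z_i\rangle+\langle R\bar u_i,v\rangle\big]dt+\mathbb{E}\langle G\bar z_i(T),\delta z_i(T)\rangle.
\]
Applying It\^o's formula to $\langle \bar p_i,\delta z_i\rangle$, where $(\bar p_i,\bar q_i,\bar q_{i,0})$ solves the adjoint BSDE \eqref{adjoint} driven by $\bar z_i$, absorbs the running $Q$-term and, via $\bar p_i(T)=-G\bar z_i(T)$, the terminal $G$-term, leaving the variational identity
\[
\mathbb{E}\int_0^T\!\big\langle R\bar u_i-B^\top\bar p_i-D^\top\bar q_i-D_0^\top\bar q_{i,0},\,v\big\rangle dt=0,\qquad\forall\,v\in\mathcal{U}_i.
\]

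To extract a pointwise formula, I would take conditional expectation with respect to $\mathcal{F}_t^{W_i}$ inside the time integral. Because $v(t)$ is $\mathcal{F}_t^{W_i}$-measurable and arbitrary, $R(t)$ is deterministic, and $\bar u_i(\cdot)\in\mathcal{U}_i$ is itself $\mathcal{F}_t^{W_i}$-adapted, a standard orthogonality argument yields
\[
R(t)\bar u_i(t)=B^\top(t)\mathbb{E}[\bar p_i(t)|\mathcal{F}_t^{W_i}]+D^\top(t)\mathbb{E}[\bar q_i(t)|\mathcal{F}_t^{W_i}]+D_0^\top(t)\mathbb{E}[\bar q_{i,0}(t)|\mathcal{F}_t^{W_i}].
\]
Inverting $R(t)$, which is permissible since $R\gg 0$, delivers \eqref{gcontrol}; substituting this expression into \eqref{z-lstate} and coupling with \eqref{adjoint} produces the Hamiltonian system \eqref{H system}. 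Sufficiency is automatic from the strict convexity of $J_i$.

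The main technical obstacle I anticipate lies in the projection step: the adjoint triple $(\bar p_i,\bar q_i,\bar q_{i,0})$ is naturally adapted to the larger filtration $\mathcal{F}_t^i=\sigma(W_0,W_i)$, while admissible controls are constrained to the coarser $\mathcal{F}_t^{W_i}$. Here I would invoke the backward separation philosophy of Wang--Wu--Xiong \cite{WWX2018}: solve \eqref{H system} on $\mathcal{F}_t^i$, and only at the final step project the optimality condition onto the observation filtration. A subtle secondary point is self-consistency of \eqref{H system}, since $\bar u_i$ depends on conditional expectations of $\bar p_i,\bar q_i,\bar q_{i,0}$, turning \eqref{H system} into a conditional-mean-field FBSDE whose solvability must be established separately; this will be handled by the Riccati decomposition announced in the introduction and pursued in Section \ref{sec3}.
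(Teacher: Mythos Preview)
Your proposal is correct and follows essentially the same route as the paper: the paper's proof simply invokes the partial-information stochastic maximum principle to obtain the conditional stationarity condition $\mathbb{E}[\partial_{u_i}H\,|\,\mathcal{F}_t^{W_i}]=0$, computes $\partial_{u_i}H$ from \eqref{Hamiltonian}, and uses $R\gg0$ to solve for $\bar u_i$. Your version is the same argument spelled out in full (convex variation, It\^o duality with the adjoint BSDE, projection onto $\mathcal{F}_t^{W_i}$), so the two differ only in that you derive the SMP inline whereas the paper cites it as a black box.
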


The proof is simple. In fact, the results follows by the application of the stochastic maximum principle.

\begin{proof}
For the decentralized optimal control $\bar{u}_{i}(\cdot)$, suppose that $\bar{z}_i(\cdot)$ is corresponding state trajectory, and $(\bar{p}_i(\cdot),\bar{q}_i(\cdot),\bar{q}_{i,0}(\cdot))$ is the unique solution to the second equation of \eqref{H system} with respect to $(\bar{z}_{i}(\cdot),\bar{u}_{i}(\cdot))$, the maximum principle reads as the following form
\begin{equation}\label{SMP1}
\begin{aligned}
&\mathbb{E}\Big[\Big\langle \frac{\partial H}{\partial u_{i}}(t,\bar{p}_{i}(t),\bar{q}_{i}(t),
\bar{q}_{i,0}(t),\bar{z}_{i}(t),\bar{u}_{i}(t)),u_i-\bar{u}_{i}(t)\Big\rangle \Big|\mathcal{F}
_{t}^{W_i}\Big]= 0,
\quad \text{for any}\ u_i\in \mathbb{R}^k,\ a.s.\ a.e.
\end{aligned}
\end{equation}
which yields that (by noticing \eqref{Hamiltonian})
\begin{equation*}
\begin{aligned}
\Big\langle &\mathbb{E}[B^{\top }(t)\bar{p}_{i}(t)+D^{\top }(t)\bar{q}_{i}(t)+D_0^{\top
}(t)\bar{q}_{i,0}(t)-R(t)\bar{u}_{i}(t)|\mathcal{F}_{t}^{W_i}],
u_i-\bar{u}_{i}(t)\Big\rangle = 0,
\end{aligned}
\end{equation*}
and by noticing
$u_i$ is an arbitrary element of $\mathbb{R}^k$, we have
\begin{equation*}
\begin{aligned}
&\mathbb{E}[B^{\top }(t)\bar{p}_{i}(t)+D^{\top }(t)\bar{q}_{i}(t)+D_0^{\top
}(t)\bar{q}_{i,0}(t)-R(t)\bar{u}_{i}(t)|\mathcal{F}
_{t}^{W_i}]= 0.
\end{aligned}
\end{equation*}
Then from $R(\cdot)\gg0$, we obtain \eqref{gcontrol}.
\end{proof}

\begin{remark}
Due to the convexity of the admissible control set, here we only need the first-order adjoint equation. Moroever, since the cost functional of \textbf{Problem (\uppercase\expandafter{\romannumeral2})} is strictly convex, it admits a unique optimal control, thus the sufficiency of optimal control can also be obtained. Furthermore, a feedback optimal control can be obtained via Riccati equation by applying stochastic maximum principle instead of using dynamic programming principle (DPP) and Hamilton-Jacobi-Bellman (HJB) equation. We mention that it will have some difficulties to obtain feedback optimal control via DPP, in fact feedback optimal control can be given through solving related HJB equations, however due to the randomness of the  coefficients of \eqref{z-lstate} (recall that $m$ is an $\mathcal{F}_t^{W_0}$-adapted process), the related HJB equation will be stochastic PDE whose solvability is challenging.
\end{remark}

Now let us study the unknown frozen limiting state-average $m(\cdot)$. When $N\rightarrow\infty$, we would like to approximate $\bar{x}_i(\cdot)$ by $\bar{z}_i(\cdot)$ (see \eqref{CC system-1}), thus $\frac{1}{N}\sum_{i=1}^N \bar{x}_{i}(\cdot)$ is approximated by $\frac{1}{N}\sum_{i=1}^N \bar{z}_{i}(\cdot)$. Moreover, inspired by \cite{HHN2018},
recall that for $i\neq j$, $\bar{z}_i$ and $\bar{z}_j$ are identically distributed and conditional independent (under $\mathbb{E}[\cdot|\mathcal{F}_{\cdot}^{W_0}]$),
by the conditional strong law of large number, it follows that (the convergence is in the sense of almost surely, see \cite{MNZ2005})
\begin{equation}\label{limit}
m(\cdot)=\underset{N\rightarrow \infty }{\lim }\frac{1}{N}\overset{N}{\underset{
i=1}{\sum }}\bar{z}_{i}(\cdot )=\mathbb{E}[\bar{z}_{i}(\cdot)|\mathcal{F}_{\cdot}^{W_0}].
\end{equation}

From \eqref{z-lstate}, by replacing $m(\cdot)$ by $\mathbb{E}[\bar{z}_{i}(\cdot)|\mathcal{F}_{\cdot}^{W_0}]$, we obtain
\begin{equation}\label{s-lstate}
  \left\{
\begin{aligned}
d\bar{z}_{i}&(t)=\{A(t)\bar{z}_{i}(t)+B(t)\bar{u}_i(t)+\alpha(t) \mathbb{E}[\bar{z}_{i}(t)|\mathcal{F}_t^{W_0}]+b(t)\}dt\\
&+\{C(t)\bar{z}_{i}(t)+D(t)\bar{u}_i(t)+\beta(t) \mathbb{E}[\bar{z}_{i}(t)|\mathcal{F}_t^{W_0}]+\sigma (t)\}dW_{i}(t)\\
&+\{C_0(t)\bar{z}_{i}(t)+D_0(t)\bar{u}_i(t)+\beta_0(t) \mathbb{E}[\bar{z}_{i}(t)|\mathcal{F}_t^{W_0}]+\sigma_0(t)\}dW_{0}(t), \\
\bar{z}_{i}&(0)=x.
\end{aligned}
\right.
\end{equation}
By taking expectation on both side of \eqref{s-lstate}, we obtain
\begin{equation}\label{Ez-equation}
  \left\{
\begin{aligned}
d\mathbb{E}[\bar{z}_{i}(t)]=&\{(A(t)+\alpha(t))\mathbb{E}[\bar{z}_{i}(t)]
+B(t)\mathbb{E}[\bar{u}_i(t)]+b(t)\}dt,\\
\mathbb{E}[\bar{z}_{i}(0)]=&x.
\end{aligned}
\right.
\end{equation}
Recall that the decentralized admissible control
$\bar{u}_i$ is $\{\mathcal{F}_t^{W_i}\}_{0\leq t\leq T}$-adapted and $W_i(\cdot)$ and $W_0(\cdot)$ are independent, we have $\mathbb{E}[\bar{u}_i(t)|\mathcal{F}_t^{W_0}]=\mathbb{E}[\bar{u}_i(t)]$. Thus, by taking conditional expectation w.r.t. $\mathcal{F}_t^{W_0}$ on both side of \eqref{s-lstate},  we have (recalling $m(\cdot)=\mathbb{E}[\bar{z}_i(\cdot)|\mathcal{F}_\cdot^{W_0}]$)
\begin{equation}\label{m-1}
  \left\{
\begin{aligned}
&dm(t)=\{(A(t)+\alpha(t))m(t)+B(t)\mathbb{E}[\bar{u}_i(t)]+b(t)\}dt\\
&+\{(C_0(t)+\beta_0(t))m(t)+D_0(t)\mathbb{E}[\bar{u}_i(t)]+\sigma_0(t)\}dW_{0}(t), \\
&m(0)=x,
\end{aligned}
\right.
\end{equation}
which implies that
\begin{equation}\label{Em-1}
  \left\{
\begin{aligned}
d\mathbb{E}[m(t)]=&\{(A(t)+\alpha(t))\mathbb{E}[m(t)]
+B(t)\mathbb{E}[\bar{u}_i(t)]+b(t)\}dt,\\
\mathbb{E}[m(0)]=&x.
\end{aligned}
\right.
\end{equation}
Now, we are going back to  \eqref{H system}, by replacing $m(\cdot)$ with $\mathbb{E}[\bar{z}_{i}(\cdot)|\mathcal{F}^{W_0}]$, we deduce the following consistency condition system (or Nash certainty equivalence equation system, see, e.g. \cite{HCM2006, HMC2006}) which is a mean-field forward backward stochastic differential equation (MF-FBSDE)
\begin{equation}\label{CC system-1}
\left\{
\begin{aligned}
d\bar{z}_{i}(t)=&\{A(t)\bar{z}_{i}(t)+B(t)R^{-1}(t)(B^{\top }(t)\mathbb{E}[\bar{p}_{i}(t)|\mathcal{F}_{t}^{W_i}]+D^{\top}(t)\mathbb{E}[\bar{q}_{i}(t)|\mathcal{F}_{t}^{W_i}]\\
&+D_0^{\top}(t)\mathbb{E}[\bar{q}_{i,0}(t)|\mathcal{F}
_{t}^{W_i}])+\alpha(t) \mathbb{E}[\bar{z}_{i}(t)|\mathcal{F}_{t}^{W_0}]+b(t)\}dt\\
&+\{C(t)\bar{z}_{i}(t)+D(t)R^{-1}(t)(B^{\top }(t)\mathbb{E}[\bar{p}_{i}(t)|
\mathcal{F}_{t}^{W_i}]+D^{\top }(t)\mathbb{E}[\bar{q}_{i}(t)|\mathcal{F}_{t}^{W_i}]\\
&+D_0^{\top }(t)\mathbb{E}[\bar{q}_{i,0}(t)|\mathcal{F}_{t}^{W_i}])+\beta(t) \mathbb{E}[\bar{z}_{i}(t)|\mathcal{F}_{t}^{W_0}]+\sigma (t)\}dW_{i}(t)\\
&+\{C_0(t)\bar{z}_{i}(t)+D_0(t)R^{-1}(t)(B^{\top }(t)\mathbb{E}[\bar{p
}_{i}(t)|\mathcal{F}_{t}^{W_i}]+D^{\top }(t)\mathbb{E}[\bar{q}_{i}(t)|
\mathcal{F}_{t}^{W_i}]\\
&+D_0^{\top }(t)\mathbb{E}[\bar{q}_{i,0}(t)|\mathcal{F}_{t}^{W_i}])
+\beta_0(t) \mathbb{E}[\bar{z}_{i}(t)|\mathcal{F}_{t}^{W_0}]+\sigma_0(t)\}dW_{0}(t), \\
d\bar{p}_{i}(t)=&-[A^{\top }(t)\bar{p}_{i}(t)+C^{\top}(t)\bar{q}_{i}\left( t\right) +
C_0^{\top }(t)\bar{q}_{i,0}\left( t\right) -Q(t)(\bar{z}_i\left(
t\right) -\mathbb{E}[\bar{z}_{i}(t)|\mathcal{F}_{t}^{W_0}])]dt\\
&+\bar{q}_{i}\left( t\right) dW_{i}(t)+\bar{q}_{i,0}\left( t\right) dW_{0}(t), \\
\bar{z}_{i}(0)=&x,\quad \bar{p}_{i}(T)=-G\bar{z}_{i}\left( T\right).
\end{aligned}
\right.
\end{equation}

Now let us consider MF-FBSDE \eqref{CC system-1}, which has fully coupled structure and conditional expectation terms. There are difficulties for establishing its well-posedness due to these features. By the discounting method, Hu, Huang and Nie \cite{HHN2018} first proved the well-posedness of a kind of MF-FBSDE with conditional expectation. The follow-up work \cite{LNW2022} extended \cite{HHN2018} to more general case. Using Theorem 3.3 in \cite{LNW2022}, we can obtain the following well-posedness of MF-FBSDE \eqref{CC system-1}.
\begin{theorem}
Let $\lambda^{\ast}$ be the maximum eigenvalue of matrix $\frac{A+A^{\top}}{2}$, suppose that $4\lambda^{\ast }<-2|\alpha|-6|C|^{2}-6|C_0|^{2}-5|\beta|^{2}-5|\beta_0|^{2}$, there exists a constant $\theta_1>0$ , which depends on $\lambda^{\ast}$, $|C|$, $|C_0|$, $|\alpha|$, $|\beta|$, $|\beta_0|$, $|Q|$, $|G|$, and does not depend on $T$, such that when $|B|$, $|D|$, $|D_0|$ and $|R^{-1}|\in[0,\theta_1)$,  then there exists a unique adapted solution $(\bar{z}_i(\cdot),\bar{p}_i(\cdot),\bar{q}_i(\cdot),\bar{q}_{i,0}(\cdot))\in $ $L^2_{\mathcal{F}^i}([0,T];\mathbb{R}^n\times \mathbb{R}^n\times \mathbb{R}^n\times \mathbb{R}^n)$ to MF-FBSDE \eqref{CC system-1}.
\end{theorem}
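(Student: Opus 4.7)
The plan is to invoke Theorem 3.3 of [LNW2022] as a black box, after recasting \eqref{CC system-1} into the canonical mean-field FBSDE form to which it applies. First I would identify the generator of the forward equation with the map sending $(\bar z_i,\mathbb{E}[\bar z_i\mid\mathcal{F}_t^{W_0}], \mathbb{E}[\bar p_i\mid\mathcal{F}_t^{W_i}],\mathbb{E}[\bar q_i\mid\mathcal{F}_t^{W_i}],\mathbb{E}[\bar q_{i,0}\mid\mathcal{F}_t^{W_i}])$ to the drift/diffusion triple in \eqref{CC system-1}, and the generator of the backward equation with the map sending $(\bar z_i,\mathbb{E}[\bar z_i\mid\mathcal{F}_t^{W_0}],\bar p_i,\bar q_i,\bar q_{i,0})$ to the driver of the $\bar p_i$-equation. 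A direct inspection shows that these coefficients are affine in their arguments with Lipschitz constants bounded by the $L^\infty$ norms $|A|,|B|,|\alpha|,|C|,|D|,|\beta|,|C_0|,|D_0|,|\beta_0|,|R^{-1}|,|Q|,|G|$, so the Lipschitz hypothesis of [LNW2022, Theorem 3.3] is automatic.

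Next I would verify the dissipativity/discounting assumption. The structural constant appearing in that theorem is built from (i) the maximum eigenvalue $\lambda^\ast$ of $(A+A^\top)/2$, which controls the quadratic term $2\langle \bar z_i,A\bar z_i\rangle$ arising when Itô's formula is applied to $|\bar z_i|^2$ weighted by a discount $e^{-\mu t}$, and (ii) the squared norms of the diffusion coefficients in front of $\bar z_i$ and of its conditional mean, which contribute the $6|C|^2+6|C_0|^2+5|\beta|^2+5|\beta_0|^2$ together with the linear mean-field term $2|\alpha|$. The hypothesis $4\lambda^\ast<-2|\alpha|-6|C|^2-6|C_0|^2-5|\beta|^2-5|\beta_0|^2$ is precisely the sign condition that, after choosing an appropriate discount parameter, makes the forward dynamics contractive independently of the horizon $T$. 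In parallel, the backward equation is driven by the $Q$ and $G$ weights, which combined with the same discount bound the contribution of $\bar p_i$ through $|Q|$ and $|G|$.

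With the dissipative forward part under control, the remaining degrees of freedom $|B|,|D|,|D_0|,|R^{-1}|$ appear only through the coupling terms that feed the adjoint conditional means into the forward SDE. These coupling terms contract provided the product $|R^{-1}|\cdot(|B|+|D|+|D_0|)^2$ is sufficiently small; this is exactly the smallness threshold $\theta_1$ in the statement, and its value depends only on $\lambda^\ast,|\alpha|,|C|,|C_0|,|\beta|,|\beta_0|,|Q|,|G|$ because all horizon dependence has already been absorbed by the discount. A standard contraction mapping on $L^2_{\mathcal{F}^i}(0,T;\mathbb{R}^n)^{\,4}$ equipped with the weighted norm $\mathbb{E}\int_0^T e^{-\mu t}(\,\cdot\,)\,dt$ then yields a unique adapted solution.

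The main obstacle I anticipate is bookkeeping the two distinct conditional expectations $\mathbb{E}[\cdot\mid\mathcal{F}_t^{W_0}]$ and $\mathbb{E}[\cdot\mid\mathcal{F}_t^{W_i}]$ simultaneously when applying the energy estimate: one must exploit the independence of $W_0$ and $W_i$ (so that conditioning on $\mathcal{F}_t^{W_0}$ commutes appropriately with integrals against $dW_i$, and vice versa) to ensure that taking conditional expectations does not enlarge the Lipschitz constants, which is precisely where the $1$-contractivity of conditional expectation in $L^2$ is used. Once this is handled, the statement follows directly from [LNW2022, Theorem 3.3] applied to \eqref{CC system-1}.
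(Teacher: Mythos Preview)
Your proposal is correct and matches the paper's approach exactly: the paper does not give a self-contained proof but simply states that the result follows by applying Theorem~3.3 of \cite{LNW2022} to \eqref{CC system-1}, remarking that the presence of the conditional expectations $\mathbb{E}[\cdot\mid\mathcal{F}_t^{W_0}]$ in place of ordinary expectations does not alter the argument. Your additional discussion of how the structural constants arise and of the $L^2$-contractivity of conditional expectation is precisely the bookkeeping the paper leaves implicit.
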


\begin{remark}
We mention that the consistency condition system \eqref{CC system-1} is a fully coupled conditional MF-FBSDE, which has the conditional expectation terms $\mathbb{E}[\bar{z}_i(\cdot)|\mathcal{F}^{W_0}]$, $\mathbb{E}[\bar{p}_i(\cdot)|\mathcal{F}^{W_i}]$,  $\mathbb{E}[\bar{q}_i(\cdot)|\mathcal{F}^{W_i}]$ and $\mathbb{E}[\bar{q}_{i,0}(\cdot)|\mathcal{F}^{W_i}]$.
In comparison with our current work, the MF-FBSDE in \cite{LNW2022} includes the expectation term $\mathbb{E}[\bar{z}_i(\cdot)]$. Even through our MF-FBSDE \eqref{CC system-1} includes the conditional expectation term $\mathbb{E}[\cdot|\mathcal{F}^{W_0}]$, the approach for proving the well-posedness of \eqref{CC system-1} is the same as we discussed in \cite{LNW2022}.
\end{remark}

\subsection{$\varepsilon$-Nash Equilibrium Analysis for Problem \textbf{(\uppercase\expandafter{\romannumeral1})}}\label{sec2.2}

In subsection \ref{sec2.1}, we obtained the optimal strategy profile $\bar{u}(\cdot)=(\bar{u}_1(\cdot),\bar{u}_2(\cdot),\ldots,\bar{u}_N(\cdot))$ of \textbf{Problem (\uppercase\expandafter{\romannumeral2})}, see Theorem 2.1 and \eqref{gcontrol}. In this section, we will show  $\bar{u}(\cdot)=(\bar{u}_1(\cdot),\bar{u}_2(\cdot),\ldots,\bar{u}_N(\cdot))$ is an $\varepsilon$-Nash equilibrium for \textbf{Problem (\uppercase\expandafter{\romannumeral1})}. Firstly, we give the definition of $\varepsilon$-Nash equilibrium.

\begin{definition}\label{varepsilon Nash equilibrium}
A set of controls $u_i(\cdot)\in \mathcal{U}_{ad}^{c}$, $1\leq i \leq N$, for $N$ agents is called an $\varepsilon$-Nash equilibrium with respect to the cost $\mathcal{J}_i$, $1\leq i \leq N$, if there exists $\varepsilon=\varepsilon(N)\geq 0$ and $\underset{N\rightarrow\infty}{\lim }\varepsilon(N)=0$ such that for any $1\leq i \leq N$, we have
\begin{equation*}
\mathcal J_{i}(\bar{u}_{i}(\cdot ),\bar{u}_{-i}(\cdot ))\leq \mathcal{J}_{i}(u_{i}(\cdot),\bar{u}_{-i}(\cdot))+\varepsilon,
\end{equation*}
when any alternative strategy $u_{i}(\cdot )\in \mathcal{U}_{ad}^{c}$ is applied by $\mathcal{A}_i$.
\end{definition}

\begin{remark}
  If $\varepsilon=0$, Definition \rm{\ref{varepsilon Nash equilibrium}} can reduce to the usual exact Nash equilibrium.
\end{remark}

Now, we give the following main result of this section and its proof will be given later.
\begin{theorem}\label{main theorem}
  Under \textup{(A1)} and \textup{(A2)}, the strategy set $(\bar{u}_1(\cdot),\bar{u}_2(\cdot),\ldots,\bar{u}_N(\cdot))$ is an $\varepsilon$-Nash equilibrium of \textbf{Problem (\uppercase\expandafter{\romannumeral1})}, where $\bar{u}_i(\cdot)$, $1\leq i\leq N$, is given by \eqref{gcontrol}.
\end{theorem}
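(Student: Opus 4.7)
The plan is to run the standard $\varepsilon$-Nash perturbation scheme: compare the $N$-player cost $\mathcal{J}_i$ under the candidate profile to the limiting cost $J_i$, exploit optimality of $\bar{u}_i$ for \textbf{Problem (II)}, and show the resulting gap shrinks to zero with rate $O(N^{-1/2})$ thanks to a conditional law of large numbers. I would first collect uniform-in-$(i,N)$ moment bounds for $\bar{x}_i$, $\bar{z}_i$, $\bar{u}_i$ and $m$ from assumptions \textup{(A1)}--\textup{(A2)} applied to \eqref{state}, \eqref{z-lstate}, \eqref{m-1} and the stochastic Hamiltonian system \eqref{H system}; these moment bounds will be used freely throughout.

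The key quantitative input is the state-average estimate
\begin{equation*}
\mathbb{E}\sup_{0\leq t\leq T}\bigl|\bar{x}^{(N)}(t) - m(t)\bigr|^2 = O(N^{-1}),
\end{equation*}
obtained by splitting $\bar{x}^{(N)} - m = (\bar{x}^{(N)} - \bar{z}^{(N)}) + (\bar{z}^{(N)} - m)$ with $\bar{z}^{(N)}:=\frac{1}{N}\sum_i \bar{z}_i$. The second term is controlled by the conditional strong law of large numbers applied to the $\bar{z}_i$'s, which are identically distributed and conditionally independent given $\mathcal{F}^{W_0}$, with conditional mean $m$ by \eqref{limit}; the first term is handled by Gronwall and BDG after subtracting \eqref{z-lstate} from \eqref{state}. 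Combining this with the quadratic shape of \eqref{cost} and \eqref{lcost} and Cauchy--Schwarz yields $|\mathcal{J}_i(\bar{u}_i,\bar{u}_{-i}) - J_i(\bar{u}_i)| = O(N^{-1/2})$, uniformly in $i$.

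For the deviation step fix $i$ and $u_i\in\mathcal{U}_{ad}^c$. Without loss of generality I may assume $\mathcal{J}_i(u_i,\bar{u}_{-i})\leq \mathcal{J}_i(\bar{u}_i,\bar{u}_{-i})+1$ (otherwise Definition \ref{varepsilon Nash equilibrium} holds trivially), which together with $R\gg 0$ yields a uniform $L^2$-bound on $u_i$. Denoting by $x_j^\ast$ the states under the perturbed profile $(u_i,\bar{u}_{-i})$, a single agent's change perturbs the state-average only by $O(1/N)$, so Gronwall combined with the previous estimate still gives $\mathbb{E}\sup_t|x^{\ast,(N)}(t) - m(t)|^2 = O(N^{-1})$. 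Introducing the auxiliary state $z_i^\ast$ driven by $u_i$ with $m$ frozen (as in \eqref{z-lstate}) and applying Gronwall once more produces $\mathbb{E}\sup_t|x_i^\ast(t) - z_i^\ast(t)|^2 = O(N^{-1})$, and therefore
\begin{equation*}
\mathcal{J}_i(u_i,\bar{u}_{-i}) \geq \tilde{J}_i(u_i) - O(N^{-1/2}),
\end{equation*}
where $\tilde{J}_i(u_i)$ denotes the functional \eqref{lcost} evaluated along $(z_i^\ast,u_i)$.

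The remaining, and main, obstacle is to show $\tilde{J}_i(u_i)\geq J_i(\bar{u}_i)$ despite the mismatch between the admissible classes $\mathcal{U}_{ad}^c$ ($\mathcal{G}_t$-adapted, hence observing every $W_j$) and $\mathcal{U}_i$ (only $\mathcal{F}_t^{W_i}$). My plan is a projection argument: set $\hat{u}_i(t):=\mathbb{E}[u_i(t)\mid\mathcal{F}_t^{W_i}]\in\mathcal{U}_i$ and show $J_i(\hat{u}_i)\leq \tilde{J}_i(u_i)$ by a conditional-Jensen calculation on the quadratic cost, which is legitimate because $m$ is $\mathcal{F}^{W_0}$-adapted, the coefficients in \eqref{z-lstate} are deterministic, and $W_i$ is independent of $W_0$ and of $\{W_j:j\neq i\}$, so conditioning on $\mathcal{F}_t^{W_i}$ commutes with the linear dynamics in a controlled way. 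Optimality of $\bar{u}_i$ for \textbf{Problem (II)} then gives $J_i(\bar{u}_i)\leq J_i(\hat{u}_i)\leq \tilde{J}_i(u_i)$, and chaining all estimates produces $\mathcal{J}_i(\bar{u}_i,\bar{u}_{-i}) \leq \mathcal{J}_i(u_i,\bar{u}_{-i}) + \varepsilon(N)$ with $\varepsilon(N) = O(N^{-1/2}) \to 0$, as required.
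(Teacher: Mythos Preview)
Your overall scheme---state-average estimates of order $O(N^{-1})$, cost comparisons $|\mathcal{J}_i-J_i|=O(N^{-1/2})$ at the candidate and at the deviation, then chaining---is exactly what the paper does in Lemmas~\ref{lemma est 1}--\ref{difference of cf 2} and the short proof of Theorem~\ref{main theorem}. In fact the paper is \emph{less} careful than you: it simply writes $J_i(\bar{u}_i)\le J_i(u_i)$ for the perturbed $u_i\in\mathcal{U}_{ad}^c$ without commenting on the mismatch between $\mathcal{U}_{ad}^c$ and $\mathcal{U}_i$ that you correctly flag.

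The one place where your proposal is shaky is the projection step. Saying that ``conditioning on $\mathcal{F}_t^{W_i}$ commutes with the linear dynamics in a controlled way'' and then invoking conditional Jensen is not a proof: because the auxiliary state \eqref{z-lstate} carries a $dW_0$ integral and the $\mathcal{F}^{W_0}$-adapted coefficient $m$, the state driven by $\hat u_i:=\mathbb{E}[u_i\mid\mathcal{F}_t^{W_i}]$ is \emph{not} the $\mathcal{F}^{W_i}$-conditional expectation of the state driven by $u_i$, so Jensen on the quadratic cost does not apply directly. A cleaner way to close this gap is variational. The gradient of $J_i$ at $\bar u_i$ equals
\[
\xi_i(t):=B^\top(t)\bar p_i(t)+D^\top(t)\bar q_i(t)+D_0^\top(t)\bar q_{i,0}(t)-R(t)\bar u_i(t),
\]
which by \eqref{gcontrol} is $\mathcal{F}_t^{i}$-adapted with $\mathbb{E}[\xi_i(t)\mid\mathcal{F}_t^{W_i}]=0$. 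Since $W_0$ is independent of $\mathcal{G}_t=\sigma(W_1,\dots,W_N)$, for any $\mathcal{G}_t$-measurable $v$ one has $\mathbb{E}[\xi_i(t)\mid\mathcal{G}_t]=\mathbb{E}[\xi_i(t)\mid\mathcal{F}_t^{W_i}]=0$, hence $\mathbb{E}\langle\xi_i(t),v\rangle=0$. Thus the first-order optimality condition for $\bar u_i$ holds against all test directions in $\mathcal{U}_{ad}^c$, and the strict convexity of $J_i$ (from $Q\ge0$, $R\gg0$, $G\ge0$) gives $J_i(\bar u_i)\le J_i(u_i)$ for every $u_i\in\mathcal{U}_{ad}^c$. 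With this substitution your argument goes through and matches the paper's.
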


In order to prove the above theorem, let us give several lemmas.

\begin{lemma}\label{lemma est 1}
Let \textup{(A1)}-\textup{(A2)} hold,  it follows that
\begin{equation}\label{est 2}
\mathbb{E}\underset{0\leq t\leq T}{\sup }|\bar{x}^{(N)}(t)-m(t)|^{2}=O(\frac{1}{N}),
\end{equation}
\begin{equation}\label{est 3}
\underset{1\leq i\leq N}{\sup }\mathbb{E}\underset{0\leq t\leq T}{\sup}|
\bar{x}_{i}(t)-\bar{z}_{i}(t)|^{2}=O(\frac{1}{N}).
\end{equation}
\end{lemma}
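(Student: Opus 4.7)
The plan is to prove \eqref{est 3} first and then derive \eqref{est 2} via a triangle inequality. The key observation is that by the symmetry of the coefficients and the $\mathcal{F}^{W_i}$-adaptedness of $\bar{u}_i$ in \eqref{gcontrol}, the strategies $\bar{u}_1,\dots,\bar{u}_N$ are i.i.d.\ (genuinely, not merely conditionally i.i.d., because after taking the conditional expectations in \eqref{gcontrol} the common noise $W_0$ has been integrated out), so the states $\bar{z}_1,\dots,\bar{z}_N$ are conditionally i.i.d.\ given $\mathcal{F}^{W_0}$ with common conditional mean $m(t)$ in view of \eqref{limit}.

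First I would estimate $Y_t:=\bar{z}^{(N)}(t)-m(t)$. Averaging \eqref{s-lstate} over $i$ and subtracting \eqref{m-1} (using that $\mathbb{E}\bar{u}_i(t)$ is independent of $i$ by symmetry) yields
\begin{equation*}
dY_t=[AY_t+BU_N]dt+\frac{1}{N}\sum_{i=1}^N[C\bar{z}_i+D\bar{u}_i+\beta m+\sigma]dW_i+[C_0Y_t+D_0U_N]dW_0,\quad Y_0=0,
\end{equation*}
where $U_N(t):=\frac{1}{N}\sum_i(\bar{u}_i(t)-\mathbb{E}\bar{u}_i(t))$. By the i.i.d.\ property, $\mathbb{E}|U_N(t)|^2=O(1/N)$; and by independence of the $W_i$'s together with uniform $L^2$-bounds on $\bar{z}_i,\bar{u}_i$ under \textup{(A1)}--\textup{(A2)}, the quadratic variation of the $W_i$-integral is of order $1/N$. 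Applying It\^o's formula to $|Y_t|^2$, then BDG and Gr\"onwall's inequality, gives $\mathbb{E}\sup_{t\leq T}|Y_t|^2=O(1/N)$.

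Next, for $\eta_i:=\bar{x}_i-\bar{z}_i$, subtracting \eqref{z-lstate} from \eqref{state} under $u_i=\bar{u}_i$ gives
\begin{equation*}
d\eta_i=[A\eta_i+\alpha(\bar{x}^{(N)}-m)]dt+[C\eta_i+\beta(\bar{x}^{(N)}-m)]dW_i+[C_0\eta_i+\beta_0(\bar{x}^{(N)}-m)]dW_0,
\end{equation*}
with $\eta_i(0)=0$. Standard It\^o/BDG estimates, combined with the decomposition $|\bar{x}^{(N)}-m|^2\leq 2|\bar{x}^{(N)}-\bar{z}^{(N)}|^2+2|Y|^2\leq \frac{2}{N}\sum_j|\eta_j|^2+2|Y|^2$ (Cauchy--Schwarz) and the exchangeability $\mathbb{E}|\eta_j|^2=\mathbb{E}|\eta_1|^2$, produce
\begin{equation*}
\mathbb{E}\sup_{s\leq t}|\eta_1(s)|^2\leq K\int_0^t\mathbb{E}\sup_{r\leq s}|\eta_1(r)|^2\,ds+\frac{K'}{N},
\end{equation*}
and Gr\"onwall's lemma closes the loop to yield \eqref{est 3}. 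Then \eqref{est 2} follows immediately from the triangle inequality together with Step 1 and \eqref{est 3}.

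The main obstacle is the $O(1/N)$ control of the $W_i$-stochastic integral in $dY_t$: this rests on uniform (in $i$) $L^2$-boundedness of $(\bar{z}_i,\bar{u}_i)$, which comes from the standard a priori estimates for the decentralized LQ problem under \textup{(A1)}--\textup{(A2)}, and, crucially, on the genuine cross-$i$ independence of $\bar{u}_i$ forced by the $\mathcal{F}^{W_i}$-measurability in \eqref{gcontrol} combined with the symmetry of the data. Once this is in hand, Steps 1 and 2 are largely mechanical applications of It\^o, BDG and Gr\"onwall.
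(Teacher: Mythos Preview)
Your argument is correct, but the route is different from the paper's. The paper proves \eqref{est 2} \emph{first} and \emph{directly}: it averages \eqref{state} (with $u_i=\bar u_i$) over $i$, subtracts \eqref{m-1}, and applies BDG/Gr\"onwall to $\bar x^{(N)}-m$ in one shot; the $\frac{1}{N}\sum_i(\cdot)\,dW_i$ term is $O(1/N)$ because $\mathbb{E}\sup_t|\bar x_i|^2$ and $\mathbb{E}\sup_t|m|^2$ are uniformly bounded, and the drift/$W_0$ contribution from $\bar u^{(N)}-\mathbb{E}\bar u_i$ is $O(1/N)$ by the same i.i.d.\ observation you make. Estimate \eqref{est 3} then follows from \eqref{est 2} by a plain Gr\"onwall on $\bar x_i-\bar z_i$, with no bootstrap needed.

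Your version instead inserts the auxiliary quantity $\bar z^{(N)}-m$ and recovers \eqref{est 3} via a triangle decomposition plus the exchangeability identity $\mathbb{E}|\eta_j|^2=\mathbb{E}|\eta_1|^2$ to close a Gr\"onwall loop, and only then gets \eqref{est 2}. This works, and the exchangeability step is legitimate (the agents are symmetric and the decentralized controls $\bar u_i$ are $\mathcal{F}^{W_i}$-adapted), but it is slightly longer than necessary: since one already has uniform $L^2$ bounds on $\bar x_i$, the paper's direct comparison of $\bar x^{(N)}$ with $m$ avoids both the intermediate $\bar z^{(N)}$ estimate and the bootstrap. On the other hand, your decomposition makes the role of the conditional law of large numbers for $\bar z_i$ more transparent and separates the ``propagation-of-chaos'' error ($\bar z^{(N)}-m$) from the ``coupling'' error ($\bar x_i-\bar z_i$), which can be conceptually cleaner in more general settings.
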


\begin{proof}
From \eqref{state} and \eqref{m-1},
by using Cauchy-Schwarz inequality and Burkholder-Davis-Gundy (BDG) inequality, it follows that there exists a constant $K$ (independent of $N$, which may vary line to line in the following)
\begin{equation*}
  \begin{aligned}
  \mathbb{E}&\underset{0\leq s\leq t}{\sup}|\bar{x}^{(N)}(s)-m(s)|^{2}
 \leq
  K \mathbb{E}\int_{0}^{t}\big|\bar{x}^{(N)}(s)-m(s)\big|^2ds  +K\mathbb{E}\int_{0}^{t}\big|\frac{1}{N}\sum_{i=1}^{N}\big(\bar{u}_i(t)-\mathbb{E}[\bar{u}_i(t)]\big)\big|^2dt\\
  &+\frac{K}{N^{2}}\mathbb{E}\overset{N}{\underset{i=1}{\sum }}
\int_{0}^{T}|C(t)\bar{x}_{i}(t)+D(t)\bar{u}_{i}(t)+\alpha(t)(\bar{x}^{(N)}(t)-m(t))+\alpha(t)m(t)+\sigma (t)|^{2}dt \\
&+\frac{K}{N^{2}}\mathbb{E}\overset{N}{\underset{i=1}{\sum }}\int_{0}^{T}|C_0(t)\bar{x}_{i}(t)+D_0(t)\bar{u}_{i}(t)+\beta(t)(\bar{x}^{(N)}(t)-m(t))
+\beta_0(t)m(t)+\sigma_0(t)|^{2}dt.
  \end{aligned}
\end{equation*}
Therefore, noticing that $\mathbb{E}\sup\limits_{0\leq t\leq T}|\bar{x}_i(t)|^2<+\infty$ and $\mathbb{E}\sup\limits_{0\leq t\leq T}|m(t)|^2\leq K$,
we obtain \eqref{est 2} by Gronwall's inequality. Then,
from \eqref{state}, \eqref{z-lstate} and \eqref{est 2}, we can show \eqref{est 3} by Gronwall's inequality.
\end{proof}

\begin{lemma}\label{difference of cf 1}
 For $1\leq i \leq N$, we have
\begin{equation*}
|\mathcal{J}_{i}(\bar{u}_{i}(\cdot ),\bar{u}_{-i}(\cdot ))-J_{i}(\bar{u}%
_{i}(\cdot ))|=O\Big(\frac{1}{\sqrt{N}}\Big).
\end{equation*}
\end{lemma}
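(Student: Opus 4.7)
The plan is to expand $\mathcal{J}_i(\bar{u}_i(\cdot),\bar{u}_{-i}(\cdot))-J_i(\bar{u}_i(\cdot))$ directly from \eqref{cost} and \eqref{lcost}. Because the control $\bar{u}_i(\cdot)$ entering both functionals is identical, the quadratic terms $\langle R\bar{u}_i,\bar{u}_i\rangle$ cancel exactly, and the proof reduces to estimating two pieces: the running cost involving $\bar{x}_i-\bar{x}^{(N)}$ versus $\bar{z}_i-m$, and the terminal cost involving $\bar{x}_i(T)$ versus $\bar{z}_i(T)$.

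For the running piece I would use the symmetric polarization identity
\begin{equation*}
\langle Qa,a\rangle-\langle Qb,b\rangle=\langle Q(a-b),a+b\rangle,\qquad Q\in\mathcal{S}^n,
\end{equation*}
with $a=\bar{x}_i-\bar{x}^{(N)}$ and $b=\bar{z}_i-m$, so that $a-b=(\bar{x}_i-\bar{z}_i)-(\bar{x}^{(N)}-m)$. Taking expectation, applying Cauchy--Schwarz, and using $Q\in L^\infty(0,T;\mathcal{S}^n)$, this contribution is bounded by a universal constant times
\begin{equation*}
\Bigl(\mathbb{E}\!\int_0^T\!\bigl(|\bar{x}_i-\bar{z}_i|^2+|\bar{x}^{(N)}-m|^2\bigr)dt\Bigr)^{1/2}\Bigl(\mathbb{E}\!\int_0^T\!\bigl(|\bar{x}_i-\bar{x}^{(N)}|^2+|\bar{z}_i-m|^2\bigr)dt\Bigr)^{1/2}.
\end{equation*}
By \eqref{est 2} and \eqref{est 3} in Lemma \ref{lemma est 1}, the first factor is $O(1/\sqrt{N})$, while the second factor is $O(1)$ thanks to standard SDE moment estimates applied to \eqref{state}, \eqref{z-lstate} and \eqref{m-1}, together with the $L^2$-boundedness of $\bar{u}_i$ that comes from its admissibility and the finiteness of $J_i(\bar{u}_i)$. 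Their product is therefore $O(1/\sqrt{N})$.

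The terminal contribution $\mathbb{E}\langle G\bar{x}_i(T),\bar{x}_i(T)\rangle-\mathbb{E}\langle G\bar{z}_i(T),\bar{z}_i(T)\rangle$ is handled by the same polarization trick with $a=\bar{x}_i(T)$, $b=\bar{z}_i(T)$, yielding a bound of order $(\mathbb{E}|\bar{x}_i(T)-\bar{z}_i(T)|^2)^{1/2}$ times a uniformly bounded second-moment factor; this is $O(1/\sqrt{N})$ directly from \eqref{est 3}. Summing the two pieces gives the claim.

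The estimation is essentially routine once Lemma \ref{lemma est 1} is available; the only mildly delicate point is verifying the uniform-in-$N$ second moment bound on $\bar{x}^{(N)}$ so that the \emph{sum} factor in the Cauchy--Schwarz inequality above stays $O(1)$. This is obtained by averaging \eqref{state} over $i$ and running a Gronwall argument, noting that the coupling through $\bar{x}^{(N)}$ is linear with bounded coefficients and that the control energy is controlled uniformly in $N$. The analogous bound on $m(\cdot)$ follows at once from the linear SDE \eqref{m-1}.
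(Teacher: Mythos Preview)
Your proposal is correct and follows essentially the same route as the paper. The paper writes the difference of quadratic forms as $\langle Q(a-b),a-b\rangle+2\langle Q(a-b),b\rangle$, which is algebraically identical to your polarization $\langle Q(a-b),a+b\rangle$; it then applies Cauchy--Schwarz together with the estimates \eqref{est 2}--\eqref{est 3} of Lemma~\ref{lemma est 1} and the uniform moment bounds $\mathbb{E}\sup_t|\bar{z}_i(t)|^2\le K$, $\mathbb{E}\sup_t|m(t)|^2\le K$, exactly as you do.
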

\begin{proof}
From \eqref{cost} and \eqref{lcost}, we have
\begin{equation*}
\begin{aligned}
&\mathcal{J}_{i}(\bar{u}_{i}(\cdot ),\bar{u}_{-i}(\cdot))-J_{i}(\bar{u}_{i}(\cdot))\\
 =&\frac{1}{2}\mathbb{E}\Big\{\int_{0}^{T}\Big[\big\langle Q(t)\big(\bar{x}
_{i}(t)-\bar{x}^{(N)}(t)\big),\bar{x}_{i}(t)-\bar{x}^{(N)}(t)\big\rangle-\big\langle Q(t)\big(\bar{z}_{i}(t)-m(t)\big),\bar{z}_{i}(t)-m(t)\big\rangle \Big]dt\\
&+\big\langle G\bar{x}_{i}(T),\bar{x}_{i}(T)\big\rangle-\big\langle G\bar{z}_{i}(T),\bar{z}_{i}(T)\big\rangle\Big\}.
\end{aligned}
\end{equation*}

By noticing that Lemma \ref{lemma est 1}, $\mathbb{E}\sup\limits_{0\leq t\leq T}|\bar{z}_i(t)|^2\leq K$ and $\mathbb{E}\sup\limits_{0\leq t\leq T}|m(t)|^2\leq K$, for some constant $K$ independent of $N$, we have (using $\langle Qa,a\rangle-\langle Qb,b\rangle=\langle Q(a-b),a-b\rangle+2\langle Q(a-b),b\rangle$)
\begin{equation*}
\begin{aligned}
&\Big|\mathbb{E}\int_{0}^{T}\Big[\big\langle Q(t)(\bar{x}_{i}(t)-\bar{x}^{(N)}(t)),
\bar{x}_{i}(t)-\bar{x}^{(N)}(t)\rangle -\langle Q(t)(\bar{z}
_{i}(t)-m(t)),\bar{z}_{i}(t)-m(t)\big\rangle \Big]dt\Big| \\
\leq &K\int_{0}^{T}\mathbb{E}\big|\bar{x}_{i}(t)-\bar{z}_{i}(t)\big|^{2}dt+K
\int_{0}^{T}\mathbb{E}\big|\bar{x}^{(N)}(t)-m(t)\big|^{2}dt\\
&+K\int_{0}^{T}\big(\mathbb{E}\big|
\bar{x}_{i}(t)-\bar{z}_{i}(t)\big|^{2}+\mathbb{E}\big|\bar{x}^{(N)}(t)-m(t)\big|^{2}\big)^{
\frac{1}{2}}dt=O\Big(\frac{1}{\sqrt{N}}\Big).
\end{aligned}
\end{equation*}%
Similarly, we can also prove that the order of terminal term is $O\Big(\frac{1}{\sqrt{N}}\Big)$, which completes the proof.
\end{proof}

Next, let us consider a perturbed control $u_i(\cdot)$ for $\mathcal{A}_i$, the corresponding state system is, for $1\leq i\leq N$,
\begin{equation}\label{perturbed state yi}
\left\{
\begin{aligned}
dy_{i}(t)=&\big[A(t)y_{i}(t)+B(t)u_{i}(t)+\alpha(t) y^{(N)}(t)+b(t)\big]dt \\
&+\big[C(t)y_{i}(t)+D(t)u_{i}(t)+\beta(t) y^{(N)}(t)+\sigma(t)\big]dW_{i}(t)\\
&+\big[C_0(t)y_{i}(t)+D_0(t)u_{i}(t)+\beta_0(t) y^{(N)}(t)+\sigma_0(t)\big]dW_{0}(t),\\
y_{i}(0)=&x,
\end{aligned}
\right.
\end{equation}
whereas other agents keep the control $\bar{u}_j(\cdot)$, $1\leq i\leq N$, $j\neq i$, and have the following state dynamics
\begin{equation}\label{perturbed state yj}
\left\{
\begin{aligned}
dy_{j}(t)=&\big[A(t)y_{j}(t)+B(t)\bar{u}_{j}(t)+\alpha(t) y^{(N)}(t)+b(t)\big]dt \\
&+\big[C(t)y_{j}(t)+D(t)\bar{u}_{j}(t)+\beta(t) y^{(N)}(t)+\sigma(t)\big]dW_{j}(t)\\
&+\big[C_0(t)y_{j}(t)+D_0(t)\bar{u}_{j}(t)+\beta_0(t) y^{(N)}(t)+\sigma_0(t)\big]dW_{0}(t),\\
y_{j}(0)=&x,
\end{aligned}
\right.
\end{equation}
where $y^{(N)}(\cdot)=\frac{1}{N}\sum\limits_{i=1}^N y_{i}(\cdot)$.
To prove that $(\bar{u}_1(\cdot),\ldots,\bar{u}_N(\cdot))$ is an $\varepsilon$-Nash equilibrium, we need to show
\begin{equation*}
\underset{u_{i}\left( \cdot \right) \in \mathcal{U}_{ad}^{c}}{\inf }\mathcal{J
}_{i}(u_{i}(\cdot ),\bar{u}_{-i}(\cdot ))
\geq \mathcal{J}_{i}(\bar{u}_{i}(\cdot ),\bar{u}_{-i}(\cdot ))-\varepsilon .
\end{equation*}
Then it only needs to consider the perturbation $\mathcal{U}_{ad}^{c}$ such that
$\mathcal{J}_{i}(u_{i}(\cdot ),\bar{u}_{-i}(\cdot )) \leq\mathcal{J}_{i}(\bar{u}_{i}(\cdot ),\bar{u}_{-i}(\cdot ))$.
Then
\begin{equation*}
\begin{aligned}
&\mathbb{E}\int_{0}^{T}\langle R(t)u_{i}(t),u_{i}(t)\rangle dt\leq \mathcal{J}
_{i}(u_{i}(\cdot ),\bar{u}_{-i}(\cdot ))\leq \mathcal{J}_{i}(\bar{u}_{i}(\cdot ),
\bar{u}_{-i}(\cdot ))
= J_{i}(\bar{u}_{i}(\cdot ))+O(\frac{1}{\sqrt{N}}),
\end{aligned}
\end{equation*}
which yields that $\mathbb{E}\int_{0}^{T}|u_{i}(t)|^{2}dt\leq K$.

For the $i$-th agent, we consider the limiting state with perturbation control
\begin{equation}\label{bar yi}
  \left\{
  \begin{aligned}
  d\bar{y}_i(t)=&\big[A(t)\bar{y}_i(t)+B(t)u_{i}(t)+\alpha(t)m(t)+b(t)\big]dt \\
&+\big[C(t)\bar{y}_i(t)+D(t)u_{i}(t)+\beta(t)m(t)+\sigma(t)\big]dW_{i}(t)\\
&+\big[C_0(t)\bar{y}_i(t)+D_0(t)u_{i}(t)+\beta_0(t)m(t)+\sigma_0(t)\big]dW_{0}(t),\\
\bar{y}_{i}(0)=&x.
  \end{aligned}
  \right.
\end{equation}

Then, we have the following estimates.
\begin{lemma}\label{lemma est 2}
Let \textup{(A1)}-\textup{(A2)} hold, then
\begin{equation}\label{est 4}
\mathbb{E}\underset{0\leq t\leq T}{\sup }|y^{(N)}(t)-m(t)|^{2}=O(\frac{1}{N}),
\end{equation}
\begin{equation}\label{est 5}
\underset{1\leq i\leq N}{\sup }\mathbb{E}\underset{0\leq t\leq T}{\sup }
|y_{i}(t)-\bar{y}_{i}(t)|^{2}=O(\frac{1}{N}).
\end{equation}
\end{lemma}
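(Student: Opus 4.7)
The plan is to mirror the argument of Lemma \ref{lemma est 1}, adjusted for the single deviating agent $\mathcal{A}_i$. The structural point is that agent $i$ contributes to the empirical average $y^{(N)}$ only with weight $1/N$, so the centralized nature of its perturbation $u_i(\cdot)\in\mathcal{U}_{ad}^c$ is absorbed at the correct order.

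First, I would average \eqref{perturbed state yi} and \eqref{perturbed state yj} over $k=1,\ldots,N$ and subtract \eqref{m-1} to obtain an SDE for $y^{(N)}(t)-m(t)$. Its drift is $(A+\alpha)(y^{(N)}-m)+\tfrac{1}{N}B(u_i-\mathbb{E}[\bar{u}_1])+\tfrac{1}{N}\sum_{j\neq i}B(\bar{u}_j-\mathbb{E}[\bar{u}_1])$, the $dW_0$-coefficient has the analogous structure with $(C_0,\beta_0,D_0,\sigma_0)$, and the $dW_k$-coefficients carry a factor $1/N$. Applying Cauchy--Schwarz and the Burkholder--Davis--Gundy inequality to $\mathbb{E}\sup_{0\leq s\leq t}|y^{(N)}(s)-m(s)|^2$, the pieces linear in $y^{(N)}-m$ feed a Gronwall-type bound, while three kinds of error terms remain: (i) the single-deviator contribution $\tfrac{1}{N}(u_i-\mathbb{E}[\bar{u}_1])$, which is $O(1/N^2)$ in $L^2$ by virtue of the a-priori bound $\mathbb{E}\int_0^T|u_i(t)|^2 dt\leq K$ established just before \eqref{bar yi}; (ii) the LLN residual $\tfrac{1}{N}\sum_{j\neq i}(\bar{u}_j-\mathbb{E}[\bar{u}_1])$, which is $O(1/N)$ in $L^2$ by mutual independence of the $\bar{u}_j$'s; (iii) the individual-noise integrals $\tfrac{1}{N}\int_0^t[\,\cdots]\,dW_k$ whose combined variance is $O(1/N)$ via BDG and orthogonality of the $\{W_k\}$. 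Gronwall's inequality then yields \eqref{est 4}.

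For \eqref{est 5}, I would subtract \eqref{bar yi} from \eqref{perturbed state yi} to obtain an SDE for $y_i(t)-\bar{y}_i(t)$ whose drift and diffusion are linear in $y_i-\bar{y}_i$ plus forcing terms $\alpha(y^{(N)}-m)$ in the drift, $\beta(y^{(N)}-m)$ on $dW_i$, and $\beta_0(y^{(N)}-m)$ on $dW_0$. Squaring, taking expectation, applying BDG to the stochastic integrals, substituting \eqref{est 4}, and invoking Gronwall produce \eqref{est 5}. The bound is uniform in $i$ because neither the coefficients nor the estimate on $y^{(N)}-m$ depend on $i$.

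The main obstacle lies in step (ii): justifying the $L^2$ law of large numbers for $\tfrac{1}{N}\sum_{j\neq i}(\bar{u}_j-\mathbb{E}[\bar{u}_1])$. Each $\bar{u}_j$ defined via \eqref{gcontrol} filters through $\mathbb{E}[\cdot|\mathcal{F}^{W_j}]$ and is therefore $\mathcal{F}^{W_j}$-adapted; since $\{W_j\}_{j=1}^N$ are mutually independent and independent of $W_0$, the $\bar{u}_j$'s are mutually independent and satisfy $\mathbb{E}[\bar{u}_j|\mathcal{F}_t^{W_0}]=\mathbb{E}[\bar{u}_j]$, which is precisely the input required for the pointwise $O(1/N)$ $L^2$-bound used when estimating both the drift and the $dW_0$-diffusion residuals in the SDE for $y^{(N)}-m$.
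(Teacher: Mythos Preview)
Your proposal is correct and follows essentially the same route as the paper: average the perturbed dynamics, subtract \eqref{m-1}, apply BDG and Cauchy--Schwarz, isolate the single-deviator term at order $1/N^2$, the LLN residual $\tfrac{1}{N}\sum_{j\neq i}(\bar u_j-\mathbb E[\bar u_1])$ at order $1/N$, and the orthogonal individual-noise integrals at order $1/N$, then close with Gronwall; \eqref{est 5} follows likewise. The only point the paper makes more explicit is that, to justify the i.i.d.\ property of the $\bar u_j$, it takes conditional expectation of the Hamiltonian system \eqref{H system} with respect to $\mathcal F_t^{W_j}$ to obtain a closed $\mathcal F^{W_j}$-adapted system \eqref{H system-1}, which pins down that each $\bar u_j$ is a measurable functional of $W_j$ alone---your one-line appeal to \eqref{gcontrol} is correct in spirit but the paper's argument makes the decoupling from $W_0$ fully transparent.
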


\begin{proof}
Firstly, we prove \eqref{est 4}.
From \eqref{perturbed state yi}, \eqref{perturbed state yj}, \eqref{m-1}
and BDG inequality, we get
\begin{equation*}
\begin{aligned}
  \mathbb{E}&\underset{0\leq s\leq t}{\sup}|y^{(N)}(s)-m(s)|^{2} \leq
  K \mathbb{E}\int_{0}^{t}\big|y^{(N)}(s)-m(s)\big|^2ds+\frac{K}{N^2}\mathbb{E}\int_{0}^{t}|u_i(s)|^2ds\\
  &+K \mathbb{E}\int_{0}^{t}\big|\frac{1}{N}\sum_{j=1,j\neq i}^{N}\bar{u}_j(s)-\mathbb{E}[\bar{u}_i(s)]\big|^2ds+\frac{K}{N^2}\sum_{j=1}^{N}\mathbb{E}\int_{0}^{t}|C(t)y_j(s)+\beta(t)(y^{(N)}(t)-m(t))\\
 &+\beta(t)m(t)+\sigma(s)|^2ds
 +\frac{K}{N^2}\sum_{j=1}^{N}\mathbb{E}\int_{0}^{t}|C_0(t)y_j(s)+\beta_0(t)(y^{(N)}(t)-m(t))\\
 &+\beta_0(t)m(t)+\sigma_0(s)|^2ds
 +\frac{K}{N^2}\sum_{j=1,j\neq i}^{N}\mathbb{E}\int_{0}^{t}|\bar{u}_j(s)|^2ds.\\
\end{aligned}
\end{equation*}

Moreover, taking conditional expectation w.r.t. $\mathcal{F}_t^{W_i}$ on both side of \eqref{H system}, we have
\begin{equation}\label{H system-1}
\left\{
\begin{aligned}
d\hat{\bar{z}}_{i}(t)=&\Big\{A(t)\hat{\bar{z}}_{i}(t)+B(t)R^{-1}(t)\big(B^{\top }(t)\mathbb{E}[\bar{p}_{i}(t)|\mathcal{F}_t^{W_i}]+D^{\top }(t)\mathbb{E}[\bar{q}_{i}(t)|\mathcal{F}_t^{W_i}]\\
&+D_0^{\top}(t)\mathbb{E}[\bar{q}_{i,0}(t)|\mathcal{F}_t^{W_i}]\big)+\alpha(t) \mathbb{E}[m(t)]+b(t)\Big\}dt\\
&+\Big\{C(t)\hat{\bar{z}}_{i}(t)+D(t)R^{-1}(t)\big(B^{\top }(t)\mathbb{E}[\bar{p}_{i}(t)|
\mathcal{F}_t^{W_i}]+D^{\top }(t)\mathbb{E}[\bar{q}_{i}(t)|\mathcal{F}_t^{W_i}]\\
&+D_0^{\top }(t)\mathbb{E}[\bar{q}_{i,0}(t)|\mathcal{F}_t^{W_i}]\big)+\beta(t)\mathbb{E}[m(t)]+\sigma (t)\Big\}dW_{i}(t)\\
d\hat{\bar{p}}_{i}(t)=&-\Big[A^{\top }(t)\hat{\bar{p}}_{i}(t)+C^{\top}(t)\hat{\bar{q}}_{i}\left( t\right) +
C_0^{\top }(t)\hat{\bar{q}}_{i,0}\left( t\right) -Q(t)\big(\hat{\bar{z}}_i(t)\\
&-\mathbb{E}[m(t)]\big)\Big]dt
+\hat{\bar{q}}_{i}\left( t\right) dW_{i}(t), \\
\hat{\bar{z}}_{i}(0)=&x,\quad \hat{\bar{p}}_{i}(T)=-G\hat{\bar{z}}_{i}\left( T\right),
\end{aligned}
\right.
\end{equation}
then from \eqref{H system-1} and \eqref{gcontrol}, we obtain that $\{\bar{u}_i(t)\}$, $1\leq i \leq N$ are independent and identically distributed. Then,
it follows that $\mathbb{E}[\bar{u}_i(\cdot)]=\mathbb{E}[\bar{u}_j(\cdot)]$, for $1\leq i,j \leq N$ and $j\neq i$. Denote $\mu(t)=\mathbb{E}[\bar{u}_i(t)]$, we have
\begin{equation*}
\begin{aligned}
&\int_0^T\mathbb{E}|\frac{1}{N}\overset{N}{\underset{j=1,j\neq i}{\sum }}\bar{u}_{j}(t)-\mu(t)|^2dt\leq\frac{2}{N^2}\int_0^T\mathbb{E}|\mu(t)|^2dt\\
&+\frac{2(N-1)^2}{N^2}\int_0^T\mathbb{E}|\frac{1}{N-1}\overset{N}{\underset{j=1,j\neq i}{\sum }}\bar{u}_{j}(t)-\mu(t)|^2dt\\
&=\frac{2(N-1)}{N^2}\int_0^T\mathbb{E}|\bar{u}_{j}(t)-\mu(t)|^2dt+\frac{2}{N^2}\int_0^T\mathbb{E}|\mu(t)|^2dt\\
&=O(\frac{1}{N}).
\end{aligned}
\end{equation*}
By using the fact that $\mathbb{E}\underset{0\leq t\leq T}{\sup}|y_j(t)|^2\leq K$, and recalling $\mathbb{E}\int_{0}^{T}|u_{i}(t)|^{2}dt\leq K$,  we have
\begin{equation*}
\begin{aligned}
&\frac{K}{N^2}\mathbb{E}\int_0^T|u_i(t)|^2dt+\frac{K}{N^2}\mathbb{E}\overset{N}{\underset{j=1}{\sum}}\int_0^T|C(t)y_j(t)+\beta(t)(y^{(N)}(t)-m(t))
+\beta(t)m(t)+\sigma(t)|^2dt\\
&+\frac{K}{N^2}\mathbb{E}\overset{N}{\underset{j=1}{\sum}}\int_0^T|C_0(t)y_j(t)+\beta_0(t)(y^{(N)}(t)-m(t))+\beta_0(t)m(t)+\sigma_0(t)|^2dt\\
&\leq \frac{K}{N}\Big(1+\mathbb{E}\int_0^T|y^{(N)}(t)-m(t)|^{2}dt\Big).
\end{aligned}
\end{equation*}
Moreover, by i.i.d property of $\bar{u}_i(\cdot)$, we get
$\frac{K}{N^2}\mathbb{E}\overset{N}{\underset{j=1,j\neq i}{\sum }}\int_0^T|\bar{u}_j(t)|^2dt=O(\frac{1}{N})$.
Then, we have
\begin{equation*}
\mathbb{E}\underset{0\leq t\leq T}{\sup }|y^{(N)}(t)-m(t)|^{2}\leq K\mathbb{E
}\int_{0}^{T}|y^{(N)}(t)-m(t)|^{2}dt+O(\frac{1}{N}).
\end{equation*}
From Gronwall's inequality, we obtain \eqref{est 4}.

Secondly, from \eqref{perturbed state yi}, \eqref{bar yi} and \eqref{est 4},
by using Gronwall's inequality, we obtain \eqref{est 5}.
\end{proof}
\smallskip

\begin{lemma}\label{difference of cf 2}
Let \textup{(A1)}-\textup{(A2)} hold, for any $1\leq i\leq N$, we have
\begin{equation*}
\Big|\mathcal{J}_{i}(u_{i}(\cdot ),\bar{u}_{-i}(\cdot ))-J_{i}(u_{i}(\cdot ))\Big|=O(\frac{1}{\sqrt{N}}).
\end{equation*}
\end{lemma}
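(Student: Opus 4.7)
The plan is to mimic the proof of Lemma \ref{difference of cf 1}, replacing Lemma \ref{lemma est 1} by Lemma \ref{lemma est 2}. Writing out \eqref{cost} and \eqref{lcost} for the perturbed configuration, the control-cost term $\mathbb{E}\int_0^T \langle R(t)u_i(t),u_i(t)\rangle dt$ cancels between $\mathcal{J}_i(u_i(\cdot),\bar{u}_{-i}(\cdot))$ and $J_i(u_i(\cdot))$, so the difference reduces to
\begin{equation*}
\begin{aligned}
&\mathcal{J}_i(u_i(\cdot),\bar{u}_{-i}(\cdot))-J_i(u_i(\cdot))
=\frac{1}{2}\mathbb{E}\Big\{\int_0^T\Big[\langle Q(t)(y_i(t)-y^{(N)}(t)),y_i(t)-y^{(N)}(t)\rangle\\
&\qquad-\langle Q(t)(\bar{y}_i(t)-m(t)),\bar{y}_i(t)-m(t)\rangle\Big]dt+\langle Gy_i(T),y_i(T)\rangle-\langle G\bar{y}_i(T),\bar{y}_i(T)\rangle\Big\}.
\end{aligned}
\end{equation*}

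Setting $a(t):=y_i(t)-y^{(N)}(t)$ and $b(t):=\bar{y}_i(t)-m(t)$, I would apply the polarization identity $\langle Qa,a\rangle-\langle Qb,b\rangle=\langle Q(a-b),a-b\rangle+2\langle Q(a-b),b\rangle$. Since $a-b=(y_i-\bar{y}_i)-(y^{(N)}-m)$, Lemma \ref{lemma est 2} yields $\mathbb{E}\sup_{0\leq t\leq T}|a(t)-b(t)|^2=O(1/N)$. Standard SDE estimates applied to \eqref{bar yi} and \eqref{perturbed state yj}, combined with the previously established bound $\mathbb{E}\int_0^T|u_i(t)|^2dt\leq K$ and the boundedness of the coefficients from \textup{(A1)}, give $\mathbb{E}\sup_{0\leq t\leq T}|\bar{y}_i(t)|^2\leq K$, $\mathbb{E}\sup_{0\leq t\leq T}|y_j(t)|^2\leq K$, and recall $\mathbb{E}\sup_{0\leq t\leq T}|m(t)|^2\leq K$, so $\mathbb{E}\sup_{0\leq t\leq T}|b(t)|^2\leq K$. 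Then by Cauchy--Schwarz,
\begin{equation*}
\mathbb{E}\int_0^T|\langle Q(t)(a(t)-b(t)),a(t)-b(t)\rangle|dt=O\Big(\frac{1}{N}\Big),
\end{equation*}
and
\begin{equation*}
\mathbb{E}\int_0^T|\langle Q(t)(a(t)-b(t)),b(t)\rangle|dt\leq K\Big(\mathbb{E}\int_0^T|a-b|^2dt\Big)^{1/2}\Big(\mathbb{E}\int_0^T|b|^2dt\Big)^{1/2}=O\Big(\frac{1}{\sqrt{N}}\Big).
\end{equation*}

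The terminal term is handled analogously: write $\langle Gy_i(T),y_i(T)\rangle-\langle G\bar{y}_i(T),\bar{y}_i(T)\rangle=\langle G(y_i(T)-\bar{y}_i(T)),y_i(T)-\bar{y}_i(T)\rangle+2\langle G(y_i(T)-\bar{y}_i(T)),\bar{y}_i(T)\rangle$ and invoke \eqref{est 5} together with $\mathbb{E}|\bar{y}_i(T)|^2\leq K$. Summing the two contributions produces the claimed $O(1/\sqrt{N})$ rate, which completes the argument. The only non-routine step is verifying the uniform-in-$N$ bound $\mathbb{E}\sup_{0\leq t\leq T}|y_j(t)|^2\leq K$; this is because $y^{(N)}$ appears in the coefficients of each $y_j$, so one first estimates $y^{(N)}$ itself (using a Gr\"onwall argument on the average of the SDEs) and then transfers the bound to each individual $y_j$.
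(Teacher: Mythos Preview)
Your proposal is correct and follows exactly the route the paper intends: the paper omits the proof, stating only that it ``is similar to Lemma \ref{difference of cf 1}'', and your argument is precisely that adaptation, using Lemma \ref{lemma est 2} in place of Lemma \ref{lemma est 1}. The moment bound $\mathbb{E}\sup_{0\le t\le T}|y_j(t)|^2\le K$ you flag as the only non-routine step is in fact already invoked (and implicitly established) in the paper's proof of Lemma \ref{lemma est 2}, so no additional work is needed there.
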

The proof is similar to Lemma \ref{difference of cf 1}, so we omit it here.
\smallskip

Finally, let us give the proof of Theorem \ref{main theorem}.

\emph{Proof of Theorem \ref{main theorem}}.
Combining Lemmas \ref{difference of cf 1} and \ref{difference of cf 2}, we have
\begin{equation*}
  \begin{aligned}
 &\mathcal{J}_i(\bar{u}_i(\cdot),\bar{u}_{-i}(\cdot))=J_i(\bar{u}_i(\cdot))+O(\frac{1}{\sqrt{N}})\\
\leq &J_i(u_i(\cdot))+O(\frac{1}{\sqrt{N}})
  =\mathcal{J}_i(u_i(\cdot),\bar{u}_{-i}(\cdot))+O(\frac{1}{\sqrt{N}}).
  \end{aligned}
\end{equation*}
Thus, Theorem \ref{main theorem} holds by taking $\varepsilon=O(\frac{1}{\sqrt{N}})$.\hfill$\blacksquare$

\section{Feedback decentralized strategies}\label{sec3}
In this section, we will further study the above decentralized control strategies \eqref{gcontrol} which can be represented as the feedback of filtered state by Riccati approach as given in the following theorem. Although we have obtained the decentralized control strategies \eqref{gcontrol} for \textbf{Problem (\uppercase\expandafter{\romannumeral2})}, it is not an implementable control policy, since it involves $\mathbb{E}[p_i(\cdot)|\mathcal{F}_{\cdot}^{W_i}]$, $\mathbb{E}[q_i(\cdot)|\mathcal{F}_{\cdot}^{W_i}]$ and $\mathbb{E}[q_{i,0}(\cdot)|\mathcal{F}_{\cdot}^{W_i}]$, where $(p_i(\cdot),q_i(\cdot),q_{i,0}(\cdot))$ is the solution to FBSDE \eqref{H system}. As usual, we would like to obtain a feedback representation of the decentralized control strategy via Riccati equations.

For $1\leq i \leq N$, to simplify symbols, we denote $\hat{f}_i(t)=\mathbb{E}[f_i(t)|\mathcal{F}_t^{W_i}]$ as the filtering of $f_i(t)$ w.r.t. $\mathcal{F}_t^{W_i}$. Then, the decentralized strategies \eqref{gcontrol} can be further expressed by
\begin{equation}\label{open-loop}
\begin{aligned}
\bar{u}_{i}(t)=&R(t)^{-1}\big(B^{\top }(t)\hat{\bar{p}}
_{i}(t)+D^{\top}(t)\hat{\bar{q}}_{i}(t)+D_0^{\top }(t)\hat{\bar{q}}_{i,0}(t)\big).
\end{aligned}
\end{equation}

Then we recall the consistency conditional system \eqref{CC system-1} as
\begin{equation}\label{CC system-4}
\left\{
\begin{aligned}
d\bar{z}_{i}(t)=&\{A(t)\bar{z}_{i}(t)+B(t)R^{-1}(t)(B^{\top }(t)\hat{\bar{p}}_{i}(t)+D^{\top }(t)\hat{\bar{q}}_{i}(t)\\
&+D_0^{\top}(t)\hat{\bar{q}}_{i,0}(t))+\alpha(t) \mathbb{E}[\bar{z}_{i}(t)|\mathcal{F}_t^{W_0}]+b(t)\}dt\\
&+\{C(t)\bar{z}_{i}(t)+D(t)R^{-1}(t)(B^{\top}(t)\hat{\bar{p}}_{i}(t)+D^{\top }(t)\hat{\bar{q}}_{i}(t)\\
&+D_0^{\top}(t)\hat{\bar{q}}_{i,0}(t))+\beta(t) \mathbb{E}[\bar{z}_{i}(t)|\mathcal{F}_t^{W_0}]+\sigma (t)\}dW_{i}(t)\\
&+\{C_0(t)\bar{z}_{i}(t)+D_0(t)R^{-1}(t)(B^{\top }(t)\hat{\bar{p}}_{i}(t)+D^{\top }(t)\hat{\bar{q}}_{i}(t)\\
&+D_0^{\top}(t)\hat{\bar{q}}_{i,0}(t))+\beta_0(t) \mathbb{E}[\bar{z}_{i}(t)|\mathcal{F}_t^{W_0}]+\sigma_0(t)\}dW_{0}(t), \\
d\bar{p}_{i}(t)=&-[A^{\top }(t)\bar{p}_{i}(t)+C^{\top}(t)\bar{q}_{i}\left( t\right)
+C_0^{\top }(t)\bar{q}_{i,0}(t)\\
&-Q(t)(\bar{z}_i\left(
t\right) -\mathbb{E}[\bar{z}_{i}(t)|\mathcal{F}_t^{W_0}])]dt\\
&+\bar{q}_{i}\left( t\right) dW_{i}(t)+\bar{q}_{i,0}\left( t\right) dW_{0}(t), \\
\bar{z}_{i}(0)=&x,\quad \bar{p}_{i}(T)=-G\bar{z}_{i}\left( T\right).
\end{aligned}
\right.
\end{equation}

\begin{theorem}\label{theoorem Riccati}
Let \textup{(A1)}-\textup{(A2)} hold, then the decentralized strategies for \textbf{Problem (\uppercase\expandafter{\romannumeral2})} can be represented as
\begin{equation}
\begin{aligned}\label{scontrol}
\bar{u}_i(t)=
&-\Sigma(t)^{-1}\big(B^{\top}(t)P(t)+D^{\top}(t)P(t)C(t)
+D_0^{\top}(t)P(t)C_0(t)\big)\hat{\bar{z}}_i(t)\\
&-\Sigma(t)^{-1}\big(B^{\top}(t)\Gamma(t)+D^{\top}P(t)\beta(t)+D_0^{\top}(t)P(t)\beta_0(t)\big)\mathbb{E}[m(t)]\\
&-\Sigma(t)^{-1}(B^{\top}(t)\Phi(t)+D^{\top}(t)P(t)\sigma(t)+D_0^{\top}(t)P(t)\sigma_0(t)),\quad 1\leq i \leq N,
\end{aligned}
\end{equation}
with
\begin{equation}\label{tildeR}
\Sigma(t)=R(t)+D^{\top}(t)P(t)D(t)+D_0^{\top}(t)P(t)D_0(t),
\end{equation}
where $P(\cdot)$ and $\Gamma(\cdot)$ solve the following Riccati equations, respectively
\begin{equation}\label{P-1}
\left\{
\begin{aligned}
&\dot{P}(t)+P(t)A(t)+A^{\top}(t)P(t)+C^{\top}(t)P(t)C(t)+C_0^{\top}(t)P(t)C_0(t)\\
&+Q(t)
-(P(t)B(t)+C^{\top}(t)P(t)D(t)+C_0^{\top}(t)P(t)D_0(t))\Sigma(t)^{-1}\\
&\times\big(B^{\top}(t)P(t)+D^{\top}(t)P(t)C(t)+D_0^{\top}(t)P(t)C_0(t)\big)=0,\\
&P(T)=G,
\end{aligned}
\right.
\end{equation}
and
\begin{equation}\label{Gamma-1}
\left\{
\begin{aligned}
&\dot{\Gamma}(t)+\Gamma(t)\Big(A(t)-B(t)\Sigma(t)^{-1}\big(B^{\top}(t)P(t)+D^{\top}(t)P(t)C(t)\\
&+D_0^{\top}(t)P(t)C_0(t)\big)\Big)+\big(A(t)-B(t)\Sigma(t)^{-1}\\
&\times(B^{\top}(t)P(t)+D^{\top}(t)P(t)C(t)+D_0^{\top}(t)P(t)C_0(t))\big)^{\top}\Gamma(t)\\
&-\Gamma(t)B(t)\Sigma(t)^{-1}\big(D^{\top}(t)P(t)\beta(t)+D_0^{\top}(t)P(t)\beta_0(t)\big)\\
&+C^{\top}P(t)\beta(t)+C_0^{\top}P(t)\beta_0(t)\\
&-\big(P(t)B(t)+C^{\top}P(t)D(t)+C_0^{\top}P(t)D_0(t)\big)\Sigma(t)^{-1}\\
&\times\big(D^{\top}P(t)\beta(t)+D_0^{\top}P(t)\beta_0(t)\big)\\
&+(P(t)+\Gamma(t))\alpha(t) -\Gamma(t)B(t)\Sigma(t)^{-1}B^{\top}(t)\Gamma(t)-Q(t)=0,\\
&\Gamma(T)=0,
\end{aligned}
\right.
\end{equation}
the deterministic function $\Phi(\cdot)$ solves the following standard ordinary differential equation (ODE)
\begin{equation}
\left\{
\begin{aligned}\label{Phi-11}
&\dot{\Phi}(t)+\Big(A^{\top}(t)-\big(P(t)B(t)+C^{\top}(t)P(t)D(t)+C_0^{\top}(t)P(t)D_0(t)\big)\\
&\qquad\quad\times\Sigma(t)^{-1}B^{\top}(t)-\Gamma(t)B(t)\Sigma(t)^{-1}B^{\top}(t)\Big)\Phi(t)\\
&\quad+\Big(C^{\top}(t)
-\big(P(t)B(t)+C^{\top}(t)P(t)D(t)+C_0^{\top}(t)P(t)D_0(t)\big)\\
&\qquad\quad\times\Sigma(t)^{-1}D^{\top}(t)-\Gamma(t)B(t)\Sigma(t)^{-1}D^{\top}(t)\Big)P(t)\sigma(t)\\
&\quad+\Big(C_0^{\top}(t)-\big(P(t)B(t)+C^{\top}(t)P(t)D(t)+C_0^{\top}(t)P(t)D_0(t)\big)\\
&\qquad\quad\times\Sigma(t)^{-1}D_0^{\top}(t)
-\Gamma(t)B(t)\Sigma(t)^{-1}D_0^{\top}(t)\Big)P(t)\sigma_0(t)\\
&+(P(t)+\Gamma(t))b(t)=0,\\
&\Phi(T)=0,
\end{aligned}
\right.
\end{equation}
the limit value of the state-average $m(\cdot)$ solves the following SDE
\begin{equation}\label{m00}
\left\{
\begin{aligned}
&dm(t)=\big\{(A(t)+\alpha(t))m(t) -B(t)\Sigma(t)^{-1}(B^{\top}(t)P(t)\\
&\quad\quad\quad +D^{\top}(t)P(t)C(t)+D_0^{\top}(t)P(t)C_0(t)+B^{\top}(t)\Gamma(t)\\
&\quad\quad\quad+D^{\top}(t)P(t)\beta(t)+D_0^{\top}(t)P(t)\beta_0(t)
)\mathbb{E}[m(t)]
+b(t)-B(t)\Sigma(t)^{-1}\\
&\quad \quad\quad\times(B^{\top}(t)\Phi(t)+D^{\top}(t)P(t)\sigma(t)+D_0^{\top}(t)P(t)\sigma_0(t))\big\}dt,\\
&\quad\quad\quad+\big\{(C_0(t)+\beta(t))m(t)-D_0(t)\Sigma(t)^{-1}(B^{\top}(t)P(t)+D^{\top}(t)P(t)C(t)\\
&\quad\quad\quad+D_0^{\top}(t)P(t)C_0(t)+B^{\top}(t)\Gamma(t)+D^{\top}(t)P(t)\beta(t)\\
&\quad\quad\quad+D_0^{\top}(t)P(t)\beta_0(t))\mathbb{E}m(t)+\sigma_0(t)
-D_0(t)\Sigma(t)^{-1}\\
&\quad\quad\quad\times(B^{\top}(t)\Phi(t)+D^{\top}(t)P(t)\sigma(t)+D_0^{\top}(t)P(t)\sigma_0(t))\big\}dW_0(t),\\
&m(0)=x,
\end{aligned}
\right.
\end{equation}
and the optimal filtering $\hat{\bar{z}}_i(\cdot)$ solves the following SDE
\begin{equation}
\left\{
\begin{aligned}\label{hat z}
d\hat{\bar{z}}_i(t)=&\Big\{\Big(A(t)-B(t)\Sigma(t)^{-1}\big(B^{\top}(t)P(t)+D^{\top}(t)P(t)C(t)\\
&+D_0^{\top}(t)P(t)C_0(t)\big)\Big)\hat{\bar{z}}_i(t)
+\Big(\alpha(t)-B(t)\Sigma(t)^{-1}\big(B^{\top}(t)\Gamma(t)\\
&+D^{\top}P(t)\beta(t)
+D_0^{\top}(t)P(t)\beta_0(t)\big)
\Big)\mathbb{E}[m(t)]-B(t)\Sigma(t)^{-1}(B^{\top}(t)\Phi(t)\\
&+D^{\top}(t)P(t)\sigma(t)+D_0^{\top}(t)P(t)\sigma_0(t))+b(t)\Big\}dt\\
&+\Big\{\Big(C(t)-D(t)\Sigma(t)^{-1}\big(B^{\top}(t)P(t)+D^{\top}(t)P(t)C(t)\\
&+D_0^{\top}(t)P(t)C_0(t)\big)\Big)\hat{\bar{z}}_i(t)
+\Big(\beta(t)-D(t)\Sigma(t)^{-1}\big(B^{\top}(t)\Gamma(t)\\
&+D^{\top}P(t)\beta(t)
+D_0^{\top}(t)P(t)\beta_0(t)\big)
\Big)\mathbb{E}[m(t)]\\
&-D(t)\Sigma(t)^{-1}(B^{\top}(t)\Phi(t)+D^{\top}(t)P(t)\sigma(t)+D_0^{\top}(t)P(t)\sigma_0(t))\\
&+\sigma(t)\Big\}dW_i(t),\\
&\hat{\bar{z}}_i(0)=x.
\end{aligned}
\right.
\end{equation}
\end{theorem}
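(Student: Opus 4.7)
My plan is to verify the claim by making an affine ansatz for the adjoint process in the consistency system \eqref{CC system-4}, reading off the associated BSDE diffusions via Itô's formula, substituting into the open-loop representation \eqref{open-loop}, and matching drifts to produce \eqref{P-1}, \eqref{Gamma-1} and \eqref{Phi-11}. Motivated by the appearance of $\hat{\bar z}_i$, $\mathbb E[m]$ and a pure deterministic term in the claimed feedback \eqref{scontrol}, and by the fact that $W_0 \perp W_i$ forces $\mathbb E[m|\mathcal F^{W_i}]=\mathbb E[m]$, I would postulate
\begin{equation*}
\bar p_i(t) \;=\; -P(t)\,\bar z_i(t) \;-\; \Gamma(t)\, m(t) \;-\; \Phi(t),
\end{equation*}
with $P,\Gamma,\Phi$ deterministic and $m$ the $\mathcal F^{W_0}$-adapted state-average limit solving \eqref{m-1}.

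Applying Itô's formula, inserting \eqref{CC system-4} for $\bar z_i$ and \eqref{m-1} for $m$, and matching the $W_i$- and $W_0$-diffusions against those of the BSDE in \eqref{CC system-4} identifies
\begin{align*}
\bar q_i &= -P\bigl[C\bar z_i + D\bar u_i + \beta m + \sigma\bigr], \\
\bar q_{i,0} &= -P\bigl[C_0\bar z_i + D_0\bar u_i + \beta_0 m + \sigma_0\bigr] - \Gamma\bigl[(C_0+\beta_0)m + D_0\,\mathbb E[\bar u_i] + \sigma_0\bigr].
\end{align*}
Substituting these into \eqref{open-loop} and using $\mathbb E[m|\mathcal F^{W_i}]=\mathbb E[m]$ together with $\mathbb E[\bar u_i|\mathcal F^{W_i}]=\bar u_i$ makes the equation linear in $\bar u_i$ (with an $\mathbb E[\bar u_i]$ closure term) whose principal coefficient is $R+D^\top P D+D_0^\top P D_0=\Sigma$. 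Taking expectations first to determine $\mathbb E[\bar u_i]$ in closed form and then solving the resulting linear equation gives precisely \eqref{scontrol}.

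With $\bar u_i$ now in feedback form, I would equate the $dt$-parts of $d\bar p_i$ (from Itô) and of the BSDE in \eqref{CC system-4}, substitute the feedback expression for $\bar u_i$ and the dynamics \eqref{m-1} of $m$, and group terms by their $\bar z_i$, $m$, $\mathbb E[m]$ and deterministic components. The coefficient of $\bar z_i$ yields the non-standard Riccati equation \eqref{P-1} for $P$; the coefficient of $m$, after absorbing the cross terms generated by $C_0^\top \bar q_{i,0}$ and the $(A+\alpha)m$ drift of $m$, yields \eqref{Gamma-1} for $\Gamma$; the residual inhomogeneous deterministic part yields \eqref{Phi-11} for $\Phi$. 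The SDE \eqref{m00} for $m$ follows by substituting \eqref{scontrol} directly into \eqref{m-1}, while the filter \eqref{hat z} is obtained by conditioning \eqref{s-lstate} on $\mathcal F_t^{W_i}$, using $\mathbb E[m|\mathcal F^{W_i}]=\mathbb E[m]$ and the vanishing $\mathcal F^{W_i}$-conditional expectation of the $W_0$-integrand.

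The main obstacle I foresee is the algebraic bookkeeping producing \eqref{Gamma-1}. Because $m$ carries a non-trivial $W_0$-diffusion, Itô differentiation of $\Gamma m$ injects a $-\Gamma\cdot(\textrm{diffusion of }m)$ piece into $\bar q_{i,0}$; this piece re-enters the drift via $C_0^\top\bar q_{i,0}$ and the feedback via $D_0^\top\hat{\bar q}_{i,0}$, and its contribution to the $\mathbb E[\bar u_i]$ closure must consolidate cleanly into the $\Sigma^{-1}$-coupled and $(P+\Gamma)\alpha$ blocks of \eqref{Gamma-1}. Well-posedness of the non-standard Riccati equations \eqref{P-1} and \eqref{Gamma-1} themselves is not addressed in this verification but is the content of Section \ref{sec3} (Lemma \ref{Plemma}, Theorems \ref{RCCtheorem} and \ref{RCCtheorem Gamma}); once those are in hand, \eqref{Phi-11} and \eqref{m00} are linear equations with bounded coefficients and are uniquely solvable by classical ODE/SDE theory.
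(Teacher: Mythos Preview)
Your overall strategy—affine ansatz for $\bar p_i$, read off $\bar q_i,\bar q_{i,0}$ from Itô, plug into \eqref{open-loop}, then match drifts—is exactly the paper's. The difference is in the \emph{ansatz itself}, and that difference is not cosmetic.

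The paper postulates
\[
\bar p_i(t)=-P(t)\bar z_i(t)-\Gamma(t)\,\mathbb E[\bar z_i(t)]-\Phi(t),
\]
with $\mathbb E[\bar z_i(t)]=\mathbb E[m(t)]$ \emph{deterministic}. Because the $\Gamma$–block is deterministic, it contributes nothing to either diffusion, so
\[
\bar q_{i,0}=-P\bigl[C_0\bar z_i+D_0\bar u_i+\beta_0 m+\sigma_0\bigr],
\]
and after $\mathcal F^{W_i}$–projection one obtains directly
\[
\hat{\bar q}_{i,0}=-P\bigl[C_0\hat{\bar z}_i+D_0\bar u_i+\beta_0\,\mathbb E[m]+\sigma_0\bigr].
\]
Substituting these into \eqref{open-loop} gives a linear equation in $\bar u_i$ alone (no $\mathbb E[\bar u_i]$ closure is needed), with solution exactly \eqref{scontrol}. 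The drift comparison, after a further $\mathcal F^{W_i}$–projection that collapses $m$ to $\mathbb E[m]$, then yields precisely \eqref{P-1}, \eqref{Gamma-1}, \eqref{Phi-11}.

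Your ansatz replaces $\mathbb E[\bar z_i]$ by the stochastic process $m$. Since $m$ has a nonzero $W_0$–diffusion, this injects the extra piece $-\Gamma\bigl[(C_0+\beta_0)m+D_0\,\mathbb E[\bar u_i]+\sigma_0\bigr]$ into $\bar q_{i,0}$, and hence $-D_0^\top\Gamma\bigl[(C_0+\beta_0)\mathbb E[m]+D_0\,\mathbb E[\bar u_i]+\sigma_0\bigr]$ into $D_0^\top\hat{\bar q}_{i,0}$. Your feedback equation then reads
\[
\Sigma\,\bar u_i + D_0^\top\Gamma D_0\,\mathbb E[\bar u_i]
= -(B^\top P+D^\top PC+D_0^\top PC_0)\hat{\bar z}_i - \bigl(\cdots + D_0^\top\Gamma(C_0+\beta_0)\bigr)\mathbb E[m]-\bigl(\cdots+D_0^\top\Gamma\sigma_0\bigr),
\]
which (i) forces an $\mathbb E[\bar u_i]$ closure with $(\Sigma+D_0^\top\Gamma D_0)^{-1}$, and (ii) produces coefficients on $\mathbb E[m]$ and the constant that do \emph{not} match \eqref{scontrol}. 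Consequently your drift matching will not reproduce \eqref{Gamma-1} or \eqref{Phi-11}; you will obtain a different (more coupled) Riccati system for $\Gamma$ and $\Phi$. This is not a bookkeeping issue but a structural one: the two ansätze represent different candidate decompositions of $\bar p_i$, and only the paper's deterministic choice leads to the equations stated in the theorem. The fix is simply to take $\Gamma(t)\,\mathbb E[\bar z_i(t)]$ (equivalently $\Gamma(t)\,\mathbb E[m(t)]$) in place of $\Gamma(t)m(t)$; then no $\Gamma$–term enters $\bar q_{i,0}$, no $\mathbb E[\bar u_i]$ closure arises, and the rest of your outline goes through verbatim.
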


\begin{proof}
Due to the terminal condition and structure of \eqref{CC system-4}, we suppose
\begin{equation}\label{pdecompose-0}
\bar{p}_i(t)=-P(t)\bar{z}_i(t)-\Gamma(t)\mathbb{E}[\bar{z}_i(t)]-\Phi(t), \quad 1\leq i \leq N,
\end{equation}
with $P(T)=G$, $\Gamma(T)=0$ and $\Phi(T)=0$, where $P(\cdot)$, $\Gamma(\cdot)$ and $\Phi(\cdot)$ will be specified later.
Applying It\^o's formula to \eqref{pdecompose-0}, we have
\begin{equation}\label{barp-0}
\begin{aligned}
d&\bar{p}_i(t)=\Big[-\big(\dot{P}(t)+P(t)A(t)\big)\bar{z}_i(t)
-\big(\dot{\Gamma}(t)+\Gamma(A(t)+\alpha(t))\big)\mathbb{E}[\bar{z}_i(t)]\\
&-P(t)\alpha(t)\mathbb{E}[\bar{z}_i(t)|\mathcal{F}_t^{W_0}]
-P(t)B(t)\bar{u}_i(t)-\Gamma(t)B(t)\mathbb{E}[\bar{u}_i(t)]\\
&-\dot{\Phi}(t)-(P(t)+\Gamma(t))b(t)\Big]dt\\
&-P(t)[C(t)\bar{z}_{i}(t) +D(t)\bar{u}_i(t)+\beta(t)\mathbb{E}[\bar{z}_i(t)|\mathcal{F}_t^{W_0}]+\sigma(t)]dW_i(t)\\
&-P(t)[C_0(t)\bar{z}_{i}(t) +D_0(t)\bar{u}_i(t)+\beta_0(t)\mathbb{E}[\bar{z}_i(t)|\mathcal{F}_t^{W_0}]+\sigma_0(t)]dW_0(t).
\end{aligned}
\end{equation}
Comparing with the diffusion terms in the second equation of \eqref{CC system-4}, we get
\begin{equation}\label{krelation-0}
\begin{aligned}
\bar{q}_i(t)=&-P(t)[C(t)\bar{z}_{i}(t) +D(t)\bar{u}_i(t)+\beta(t)\mathbb{E}[\bar{z}_i(t)|\mathcal{F}_t^{W_0}]+\sigma (t)],\\
\bar{q}_{i,0}(t)=&-P(t)[C_0(t)\bar{z}_{i}(t) +D_0(t)\bar{u}_i(t)+\beta_0(t)\mathbb{E}[\bar{z}_i(t)|\mathcal{F}_t^{W_0}]+\sigma_0 (t)].
\end{aligned}
\end{equation}
By taking conditional expectation w.r.t. $\mathcal{F}_t^{W_i}$ of \eqref{pdecompose-0} and \eqref{krelation-0}, we have
\begin{equation*}
\begin{aligned}
\hat{\bar{p}}_i(t)=&-P(t)\hat{\bar{z}}_i(t)-\Gamma(t)\mathbb{E}[\bar{z}_i(t)]-\Phi(t),\\
\hat{\bar{q}}_i(t)=&-P(t)[C(t)\hat{\bar{z}}_{i}(t) +D(t)\bar{u}_i(t)+\beta\mathbb{E}[\bar{z}_i(t)]+\sigma(t)],\\
\hat{\bar{q}}_{i,0}(t)=&-P(t)[C_0(t)\hat{\bar{z}}_{i}(t) +D_0(t)\bar{u}_i(t)+\beta_0\mathbb{E}[\bar{z}_i(t)]+\sigma_0(t)].
\end{aligned}
\end{equation*}
Then substituting them into \eqref{open-loop}, we can derive
\begin{equation}\label{u-0}
\begin{aligned}
\bar{u}_i(t)=
&-\Sigma(t)^{-1}\big(B^{\top}(t)P(t)+D^{\top}(t)P(t)C(t)+D_0^{\top}(t)P(t)C_0(t)\big)\hat{\bar{z}}_i(t)\\
&-\Sigma(t)^{-1}\big(B^{\top}(t)\Gamma(t)+D^{\top}P(t)\beta(t)+D_0^{\top}(t)P(t)\beta_0(t)\big)\mathbb{E}[\bar{z}_i(t)]\\
&-\Sigma(t)^{-1}(B^{\top}(t)\Phi(t)+D^{\top}(t)P(t)\sigma(t)+D_0^{\top}(t)P(t)\sigma_0(t)),
\\
&\quad 1\leq i \leq N.
\end{aligned}
\end{equation}
By recalling that $m(\cdot)=\mathbb{E}[\bar{z}_i(\cdot)|\mathcal{F}_t^{W_0}]$, we have $\mathbb{E}[m(\cdot)]=\mathbb{E}[\bar{z}_i(\cdot)]$, then \eqref{scontrol} holds.
Moreover, we have
\begin{equation}\label{Eu-0}
\begin{aligned}
\mathbb{E}[\bar{u}_i(t)]=&-\Sigma(t)^{-1}\big(B^{\top}(t)P(t)+D^{\top}(t)P(t)C(t)+D_0^{\top}(t)P(t)C_0(t)\\
&+B^{\top}(t)\Gamma(t)+D^{\top}(t)P(t)\beta(t)+D_0^{\top}(t)P(t)\beta_0(t)\big)\mathbb{E}[\bar{z}_i(t)]\\
&-\Sigma(t)^{-1}\big(B^{\top}(t)\Phi(t)+D^{\top}(t)P(t)\sigma(t)+D_0^{\top}(t)P(t)\sigma_0(t)\big), \\&\quad1\leq i \leq N.
\end{aligned}
\end{equation}
Now, let us consider the equations for $P(\cdot)$, $\Gamma(\cdot)$ and $\Phi(\cdot)$.
Comparing the drift terms of \eqref{barp-0} with second equation in \eqref{CC system-4}, by noticing \eqref{pdecompose-0} and \eqref{krelation-0}, we obtain
\begin{equation}\label{drift-0}
\begin{aligned}
&\Big(\dot{P}(t)+P(t)A(t)+A^{\top}(t)P(t)+C^{\top}(t)P(t)C(t)+C_0^{\top}(t)P(t)C_0(t)\\
&+Q(t)\Big)\bar{z}_i(t)
+\Big(P(t)B(t)+C^{\top}(t)P(t)D(t)+C_0^{\top}(t)P(t)D_0(t)\Big)\bar{u}_i(t)\\
&+\Big(\dot{\Gamma}(t)+\Gamma(t)(A(t)+\alpha(t))+A^{\top}(t)\Gamma(t)\Big)\mathbb{E}[\bar{z}_i(t)]
+\Gamma(t)B(t)\mathbb{E}[\bar{u}_i(t)]\\
&
+\Big(P(t)\alpha(t)+C^{\top}(t)P(t)D(t)+C_0^{\top}(t)P(t)D_0(t)-Q(t)\Big)
\\
&\times\mathbb{E}[\bar{z}_i(t)|\mathcal{F}_t^{W_0}]
+\dot{\Phi}(t)+P(t)b(t)+\Gamma(t)b(t)+A^{\top}(t)\Phi(t)\\
&+C^{\top}(t)P(t)\sigma(t)+C_0^{\top}(t)P(t)\sigma_0(t)=0.
\end{aligned}
\end{equation}
By substituting \eqref{u-0} and \eqref{Eu-0} into \eqref{drift-0} and taking conditional expectation w.r.t. $\mathcal{F}_t^{W_i}$ of \eqref{drift-0},
we obtain the equations for $P(\cdot)$, $\Gamma(\cdot)$ and $\Phi(\cdot)$ which solve \eqref{P-1}, \eqref{Gamma-1} and \eqref{Phi-11}, respectively.

In addition, by substituting \eqref{Eu-0} into \eqref{m-1}, it is easy to show that $m(\cdot)$ solves \eqref{m00}. Moreover, from \eqref{s-lstate} and \eqref{scontrol}, then equation \eqref{hat z} of the optimal filtering $\hat{\bar{z}}_i(\cdot)$ is obtained, which completes the proof.
\end{proof}

To summarize, we obtain that $(P(\cdot),\Gamma(\cdot),\Phi(\cdot))$ solves the following Riccati system
\begin{equation}
\left\{
\begin{aligned}\label{RCC}
&\dot{P}(t)+P(t)A(t)+A^{\top}(t)P(t)+C^{\top}(t)P(t)C(t)+C_0^{\top}(t)P(t)C_0(t)\\
&+Q(t)
-(P(t)B(t)+C^{\top}(t)P(t)D(t)+C_0^{\top}(t)P(t)D_0(t))\Sigma(t)^{-1}\\
&\times\big(B^{\top}(t)P(t)+D^{\top}(t)P(t)C(t)+D_0^{\top}(t)P(t)C_0(t)\big)=0,\\
&\dot{\Gamma}(t)+\Gamma(t)\Big(A(t)-B(t)\Sigma(t)^{-1}\big(B^{\top}(t)P(t)+D^{\top}(t)P(t)C(t)\\
&+D_0^{\top}(t)P(t)C_0(t)\big)\Big)+\big(A(t)-B(t)\Sigma(t)^{-1}\\
&\times(B^{\top}(t)P(t)+D^{\top}(t)P(t)C(t)+D_0^{\top}(t)P(t)C_0(t))\big)^{\top}\Gamma(t)\\
&-\Gamma(t)B(t)\Sigma(t)^{-1}\big(D^{\top}(t)P(t)\beta(t)+D_0^{\top}(t)P(t)\beta_0(t)\big)\\
&+C^{\top}P(t)\beta(t)+C_0^{\top}P(t)\beta_0(t)\\
&-\big(P(t)B(t)+C^{\top}P(t)D(t)+C_0^{\top}P(t)D_0(t)\big)\Sigma(t)^{-1}\\
&\times\big(D^{\top}P(t)\beta(t)+D_0^{\top}P(t)\beta_0(t)\big)\\
&+(P(t)+\Gamma(t))\alpha(t) -\Gamma(t)B(t)\Sigma(t)^{-1}B^{\top}(t)\Gamma(t)-Q(t)=0,\\
&\dot{\Phi}(t)+\Big(A^{\top}(t)-\big(P(t)B(t)+C^{\top}(t)P(t)D(t)+C_0^{\top}(t)P(t)D_0(t)\big)\\
&\qquad\quad\times\Sigma(t)^{-1}B^{\top}(t)-\Gamma(t)B(t)\Sigma(t)^{-1}B^{\top}(t)\Big)\Phi(t)\\
&\quad+\Big(C^{\top}(t)
-\big(P(t)B(t)+C^{\top}(t)P(t)D(t)+C_0^{\top}(t)P(t)D_0(t)\big)\\
&\qquad\quad\times\Sigma(t)^{-1}D^{\top}(t)-\Gamma(t)B(t)\Sigma(t)^{-1}D^{\top}(t)\Big)P(t)\sigma(t)\\
&\quad+\Big(C_0^{\top}(t)-\big(P(t)B(t)+C^{\top}(t)P(t)D(t)+C_0^{\top}(t)P(t)D_0(t)\big)\\
&\qquad\quad\times\Sigma(t)^{-1}D_0^{\top}(t)
-\Gamma(t)B(t)\Sigma(t)^{-1}D_0^{\top}(t)\Big)P(t)\sigma_0(t)\\
&+(P(t)+\Gamma(t))b(t)=0,\\
&P(T)=G,\ \Gamma(T)=0,\ \Phi(T)=0,
\end{aligned}
\right.
\end{equation}
where $\Sigma(t)=R(t)+D^{\top}(t)P(t)D(t)+D_0^{\top}(t)P(t)D_0(t)$.

Noticing that due to the term $P(t)B(t)+C^{\top}(t)P(t)D(t)+C_0^{\top}(t)P(t)D_0(t)$ in \eqref{P-1} and
\begin{equation*}
D^{\top}(t)P(t)D(t)+D_0^{\top}(t)P(t)D_0(t)  \neq(D^{\top}(t)+D_0^{\top}(t))^{\top}P(t)(D^{\top}(t)+D_0^{\top}(t)),
\end{equation*}
it means that \eqref{P-1} is not standard Riccati equation.
Concerning \eqref{Gamma-1}, it is more complicated since the non-symmetry comes from not only the term
$\Gamma(t)B(t)\Sigma(t)^{-1}\big(D^{\top}(t)P(t)\beta(t)+D_0^{\top}(t)P(t)\beta_0(t)\big)$
but also the term $C^{\top}P(t)\beta(t)+C_0^{\top}P(t)\beta_0(t)-\big(P(t)B(t)+C^{\top}P(t)D(t)+C_0^{\top}(t)P(t)D_0(t)\big)\Sigma(t)^{-1}\big(D^{\top}P(t)\beta(t)+D_0^{\top}P(t)\beta_0(t)\big)$
and the term $\Gamma(t)\alpha(t)$. Concerning the equation of $\Phi(\cdot)$, it is a ODE once $P(\cdot)$ and $\Gamma(\cdot)$ are determined. In general, we see that the solvability of $(P(\cdot),\Gamma(\cdot),\Phi(\cdot))$ is very complicated.

Firstly, we investigate the well-posedness of equation \eqref{P-1}.

\begin{lemma}\label{Plemma}
Let \textup{(A1)}-\textup{(A2)} hold. Then the Riccati equation \eqref{P-1} admits a unique solution $P(\cdot)\in C([0,T];\mathcal{S}_+^n)$.
\end{lemma}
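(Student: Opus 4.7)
The plan is to recognize \eqref{P-1} as the Riccati equation attached to a classical stochastic LQ control problem driven by two independent Brownian motions, and then transport the well-posedness of that auxiliary problem back to \eqref{P-1}. Concretely, I would consider the auxiliary forward SDE
\begin{equation*}
dz(t) = [A(t)z(t)+B(t)u(t)]dt + [C(t)z(t)+D(t)u(t)]dW_1(t) + [C_0(t)z(t)+D_0(t)u(t)]dW_0(t),
\end{equation*}
with cost $\tilde{J}(u) = \frac{1}{2}\mathbb{E}\{\int_0^T(\langle Qz,z\rangle + \langle Ru,u\rangle)dt + \langle Gz(T),z(T)\rangle\}$. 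Under (A1)--(A2), in particular $Q\geq 0$, $G\geq 0$ and $R\gg 0$, this is a standard multi-noise LQ problem with strictly convex cost; its associated Riccati equation (see e.g. Yong--Zhou, Chapter~6) is precisely \eqref{P-1}, with $\Sigma$ playing the role of the control-weighting operator. Note that $R\gg 0$ together with $P\geq 0$ forces $\Sigma(P) = R + D^{\top}PD + D_0^{\top}PD_0 \gg 0$, so $\Sigma^{-1}$ is well-defined and smooth on $\{P\geq 0\}$.

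The proof then proceeds in three standard steps. First, local existence: since the map $P\mapsto \Sigma(P)^{-1}$ is smooth on $\{P\geq 0\}$, the right-hand side of \eqref{P-1} is locally Lipschitz in $P$, so the backward Cauchy problem $P(T)=G$ has a unique $C^1$ local solution by ODE theory. Second, positivity and an a priori bound: for the auxiliary LQ problem, the dynamic programming identification gives $\langle P(s)\xi,\xi\rangle = \inf_u \tilde{J}_s(u;\xi)$ for any $\xi\in\mathbb{R}^n$ and any $s$ where $P(s)$ is defined, where $\tilde{J}_s$ is the restriction of the cost to $[s,T]$ with initial condition $\xi$ at time $s$. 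Nonnegativity of the integrand and terminal cost yields $0\leq \langle P(s)\xi,\xi\rangle \leq \tilde{J}_s(0;\xi) \leq K|\xi|^2$, where $K$ depends only on the $L^{\infty}$-bounds in (A1)--(A2). This simultaneously gives $P(s)\in\mathcal{S}_+^n$ and a uniform $L^{\infty}$-bound on $P$. Third, global extension and uniqueness: the uniform bound prevents blow-up, so the local solution extends to all of $[0,T]$; uniqueness follows from the local Lipschitz dependence and a Gronwall estimate, and continuity is automatic from the $C^1$ regularity.

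The main obstacle I anticipate is the bookkeeping needed to verify cleanly that \eqref{P-1} really is the Riccati equation of the auxiliary LQ problem: one must match the individual-noise block $(C,D)$ and the common-noise block $(C_0,D_0)$, and check that $B^{\top}P + D^{\top}PC + D_0^{\top}PC_0$ is the correct gain matrix in the completion-of-squares argument. Once this identification is pinned down, everything reduces to classical stochastic LQ theory. A purely analytic alternative would be to prove local existence by Picard iteration, propagate positivity via a comparison principle for matrix Riccati equations with nonnegative terminal data, and control blow-up directly from the structure of the quadratic nonlinearity; I would prefer the control-theoretic route since the probabilistic representation makes both positivity and the $L^{\infty}$-bound transparent without additional technical work.
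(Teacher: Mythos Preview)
Your proposal is correct but follows a genuinely different route from the paper. You identify \eqref{P-1} as the Riccati equation of an auxiliary two-noise LQ problem and then argue via local ODE existence, the value-function representation $\langle P(s)\xi,\xi\rangle=\inf_u \tilde J_s(u;\xi)$ to secure positivity and an $L^\infty$ a~priori bound, and a continuation argument to go global; uniqueness is by Gronwall on the difference of two solutions. The paper, by contrast, gives a self-contained iterative construction: it rewrites \eqref{P-1} in quasi-linear form \eqref{P2}, builds a monotone decreasing sequence $P_0\ge P_1\ge\cdots\ge 0$ by freezing the nonlinear gain $\Psi$ at the previous iterate and solving the resulting linear Lyapunov equation \eqref{Pi+1}, verifies the monotonicity $P_i\ge P_{i+1}$ through a direct algebraic computation of $\Delta_i=P_i-P_{i+1}$ (see \eqref{di}), and passes to the limit by dominated convergence. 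Uniqueness is handled in both approaches the same way. Your route is shorter and makes positivity and boundedness transparent via the probabilistic interpretation, at the cost of invoking the LQ completion-of-squares machinery; the paper's route is purely algebraic and constructive, never appealing to any control-theoretic identification, which is exactly the point the authors stress in the remark immediately following the lemma.
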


\begin{proof}
Firstly, we prove \eqref{P-1} admits at most one solution $P(\cdot)\in C([0,T];\mathcal{S}_+^n)$. Suppose $P_1(\cdot)$ and $P_2(\cdot)$ are two solutions of \eqref{P-1} and we set $\widehat{P}(\cdot)=P_1(\cdot)-P_2(\cdot)$, then $\widehat{P}(\cdot)$ satisfies
\begin{equation*}
\left\{
\begin{aligned}
&\dot{\widehat{P}}(t)+\widehat{P}(t)A(t)+A^{\top}(t)\widehat{P}(t)+C^{\top}(t)\widehat{P}(t)C(t)\\
&+C_0^{\top}(t)\widehat{P}(t)C_0(t)
-(\widehat{P}(t)B(t)+C^{\top}(t)\widehat{P}(t)D(t)\\
&+C_0^{\top}(t)\widehat{P}(t)D_0(t))\Sigma_1(t)^{-1}\\
&\times(P_1(t)B(t)+C^{\top}(t)P_1(t)D(t)+C_0^{\top}(t)P_1(t)D_0(t))^{\top}\\
&-(P_2(t)B(t)+C^{\top}(t)P_2(t)D(t)+C_0^{\top}(t)P_2(t)D_0(t))\\
&\times\Sigma_2(t)^{-1}(\widehat{P}(t)B(t)+C^{\top}(t)\widehat{P}(t)D(t)+C_0^{\top}(t)\widehat{P}(t)D_0(t))^\top\\
&+(P_2(t)B(t)+C^{\top}(t)P_2(t)D(t)+C_0^{\top}(t)P_2(t)D_0(t))\\
&\times\Sigma_1(t)^{-1}(D^{\top}(t)\widehat{P}(t)D(t)
+D_0^{\top}(t)\widehat{P}(t)D_0(t))\Sigma_2(t)^{-1}\\
&\times(P_1(t)B(t)+C^{\top}(t)P_1(t)D(t)+C_0^{\top}(t)P_1(t)D_0(t))^{\top}=0,\\
&\widehat{P}(T)=0,
\end{aligned}
\right.
\end{equation*}
where $\Sigma_i(t)=R(t)+D^{\top}(t)P_i(t)D(t)+D_0^{\top}(t)P_i(t)D_0(t)$, $i=1,2$.
Noting that for $i=1,2$, $\Sigma_i(t)^{-1}\leq R(t)^{-1}$ and $|\Sigma_i(t)^{-1}|<\infty$,
recalling $R\gg0$ (which implies $R^{-1}\in L^{\infty }(0,T;\mathcal{S}^{k})$). Consequently, $|\Sigma_1(t)^{-1}|$ and $|\Sigma_2(t)^{-1}|$ are uniformly bounded. Using Gronwall's inequality, we get $\widehat{P}(t)=0$. This means that the uniqueness of $P(\cdot)$ is proved.

Secondly, let us focus on the existence of solution to \eqref{P-1}. Motivated by \cite{YZ1999},
we can verify that equation \eqref{P-1} is equivalent to the following equation
\begin{equation}
\left\{
\begin{aligned}\label{P2}
&\dot{P}(t)+P(t)\widehat{A}(t)+\widehat{A}^\top(t)P(t)+\widehat{C}^\top(t)P(t)\widehat{C}(t)\\
&+\widehat{C}_0^\top(t)P(t)\widehat{C}_0(t)+\widehat{Q}(t)=0,\\
&P(T)=G,
\end{aligned}
\right.
\end{equation}
where
\begin{equation}\label{hat ACQ}
\left\{
\begin{aligned}
&\widehat{A}(t)=A(t)-B(t)\Psi(t),\quad \widehat{C}(t)=C(t)-D(t)\Psi(t),\\
&\widehat{C}_0(t)=C_0(t)-D_0(t)\Psi(t),\\
&\widehat{Q}(t)=Q(t)+\Psi^{\top}(t)R(t)\Psi(t),\\
&\Psi(t)=(R(t)+D^{\top}(t)P(t)D(t)+D_0^{\top}(t)P(t)D_0(t))^{-1}\\
&\quad \quad \quad \times (P(t)B(t)+C^\top(t)P(t)D(t)+C_0^\top(t)P(t)D_0(t))^\top.\\
\end{aligned}
\right.
\end{equation}
We mention that different from \cite{YZ1999}, we have additional terms in \eqref{tildeR} and \eqref{P-1}, thus we need modify  $\Psi(t)$, which includes $D_0^{\top}(t)P(t)D_0(t)$.


Next, with the help of modified iterative method and mathematical induction, we can prove the existence of solution to \eqref{P2}, which yields that \eqref{P-1} also has a solution. To do this, we set
\begin{equation}\label{P0}
\left\{
\begin{aligned}
&\dot{P}_0(t)+P_0(t)A(t)+A^\top(t)P_0(t)+C^\top(t)P_0(t)C(t)\\
&\qquad+C_0^\top(t)P_0(t)C_0(t)+Q(t)=0,\\
&P_0(T)=G,
\end{aligned}
\right.
\end{equation}
which admits a unique $P_0(\cdot)\in C([0,T];\mathcal{S}_+^n)$ by noticing $Q(\cdot)\geq 0$ (see Lemma 7.3 of \cite{YZ1999}).
For $i\ge0$, we define
\begin{equation}\label{iterative}
\left\{
\begin{aligned}
&\Psi_i(t)=(R(t)+D^{\top}(t)P_i(t)D(t)+D_0^{\top}(t)P_i(t)D_0(t))^{-1}\\
&\quad\times (P_i(t)B(t)+C^\top(t)P_i(t)D(t)+C_0^\top(t)P_i(t)D_0(t))^\top,\\
&\widehat{A}_i(t)=A(t)-B(t)\Psi_i(t),\quad \widehat{C}_i(t)=C(t)-D(t)\Psi_i(t),\\
&\widehat{C}_{0,i}(t)=C_0(t)-D_0(t)\Psi_i(t), \\ &\widehat{Q}_i(t)=Q(t)+\Psi_i^{\top}(t)R(t)\Psi_i(t).\\
\end{aligned}
\right.
\end{equation}
Noticing that \eqref{iterative} is also quite different from \cite{YZ1999} due to the appearance of $D_0^{\top}(t)P_i(t)D_0(t)$.

Let $P_{i+1}(t)$ be the solution of
\begin{equation}\label{Pi+1}
\left\{
\begin{aligned}
&\dot{P}_{i+1}(t)+P_{i+1}(t)\widehat{A}_i(t)+\widehat{A}_i^{\top}(t)P_{i+1}(t)
+\widehat{Q}_i(t)\\
&+\widehat{C}_i^\top(t)P_{i+1}(t)\widehat{C}_i(t)
+\widehat{C}_{0,i}^\top(t)P_{i+1}(t)\widehat{C}_{0,i}(t)=0,\\
&P_{i+1}(T)=G.
\end{aligned}
\right.
\end{equation}
Noticing that $R\gg0$, $Q\ge0$ and $G\geq0$, by using Lemma 7.3 of \cite{YZ1999} and mathematical induction, one can check that for $i\ge 0$, $P_i(\cdot)$ is well defined and $P_i(\cdot)\in C([0,T];\mathcal{S}_+^n)$.
Now we will show that $P_i(\cdot)$, for $i\ge0$, is a decreasing sequence in $C([0,T];\mathcal{S}_+^n)$. For simplicity, we denote $\Delta_{i-1}(t)=P_{i-1}(t)-P_{i}(t)$, $\Upsilon_i(t)=\Psi_{i-1}(t)-\Psi_i(t)$ and $\Upsilon_0(t)=-\Psi_0(t)$.
From \eqref{P0} and \eqref{Pi+1}, when $i=0$, we have
\begin{equation*}
\left\{
\begin{aligned}
&\dot{\Delta}_{0}(t)+P_{0}(t)(A(t)-\widehat{A}_0(t))+\Delta_0(t)\widehat{A}_0(t)+\widehat{A}_0^\top(t)\Delta_0(t)\\
&+(A(t)-\widehat{A}_0(t))^\top P_0(t)+Q(t)-\widehat{Q}_0(t)\\
&+C^\top(t)P_0(t)C(t)-\widehat{C}_0^\top(t)P_0(t)\widehat{C}_0(t)+
\widehat{C}_0^\top(t)\Delta_0(t)\widehat{C}_0(t)\\
&+C_0^\top(t)P_0(t)C_0(t)-\widehat{C}_{0,0}^\top(t)P_0(t)\widehat{C}_{0,0}(t)+
\widehat{C}_{0,0}^\top(t)\Delta_0(t)\widehat{C}_{0,0}(t)=0,\\
&\Delta_{0}(T)=0,
\end{aligned}
\right.
\end{equation*}
which yields that
\begin{equation*}
\begin{aligned}
&-[\dot{\Delta}_0(t)+\Delta_0(t)\widehat{A}_0(t)+\widehat{A}_0^\top(t)\Delta_0(t)+\widehat{C}_0^\top(t)\Delta_0(t)\widehat{C}_0(t)+\widehat{C}_{0,0}^\top(t)\Delta_0(t)\widehat{C}_{0,0}(t)]\\
=&\Upsilon^\top_0(t)(R(t)+D^\top(t)P_0(t)D(t)+D_0^\top(t)P_0(t)D_0(t))\Upsilon_0(t)\geq 0.
\end{aligned}
\end{equation*}
By noting  $\Delta_0(T)=0$ and Lemma 7.3 of \cite{YZ1999}, we get $P_0(t)\geq P_1(t)$, for all $t\in [0,T]$. For $i\ge1$, suppose $P_{i-1}(t)\geq P_i(t)$, $t\in[0,T]$, it is sufficient to prove $P_{i}(t)\geq P_{i+1}(t)$, $t\in[0,T]$.
Using \eqref{Pi+1} and $\Delta_i(t)=P_i(t)-P_{i+1}(t)$, we have
\begin{equation}\label{Delta2}
\left\{
\begin{aligned}
&\dot{\Delta}_i(t)+P_i(t)(\widehat{A}_{i-1}(t)-\widehat{A}_i(t))+\Delta_i(t)\widehat{A}_i(t)\\
&+(\widehat{A}_{i-1}(t)-\widehat{A}_i(t))^\top P_i(t)+\widehat{A}_i^\top(t)\Delta_i(t)\\
&+\widehat{C}_i^\top(t)\Delta_i(t)\widehat{C}_i(t)+\widehat{C}_{i-1}^\top(t)P_i(t)\widehat{C}_{i-1}(t)-\widehat{C}_i^\top(t)P_i(t)\widehat{C}_i(t)\\
&+\widehat{C}_{0,i}^\top(t)\Delta_i(t)\widehat{C}_{0,i}(t)+\widehat{C}_{0,i-1}^\top(t)P_i(t)\widehat{C}_{0,i-1}(t)-\widehat{C}_{0,i}^\top(t)P_{0,i}(t)\widehat{C}_{0,i}(t)\\
&+\widehat{Q}_{i-1}(t)-\widehat{Q}_i(t)=0,\\
&\Delta_{i}(T)=0.
\end{aligned}
\right.
\end{equation}
According to \eqref{iterative}, we have
\begin{equation*}
\begin{aligned}
&\widehat{A}_{i-1}(t)-\widehat{A}_i(t)=-B(t)\Upsilon_i(t),\quad \widehat{C}_{i-1}(t)-\widehat{C}_{i}(t)=-D(t)\Upsilon_i(t),\quad \widehat{C}_{0,i-1}(t)-\widehat{C}_{0,i}(t)=-D_0(t)\Upsilon_i(t),\\
&\widehat{C}_{i-1}^\top(t)P_i(t)\widehat{C}_{i-1}(t)-\widehat{C}_i^\top(t)P_i(t)\widehat{C}_i(t)\\
=&\Upsilon_i^\top(t)D^\top(t)P_i(t)D(t)\Upsilon_i(t)-\widehat{C}_i^\top(t)P_i(t)D(t)\Upsilon_i(t)-\Upsilon_i^\top(t)D^\top(t)P_i(t)\widehat{C}_i(t),\\
&\widehat{C}_{0,i-1}^\top(t)P_i(t)\widehat{C}_{0,i-1}(t)-\widehat{C}_{0,i}^\top(t)P_i(t)\widehat{C}_{0,i}(t)\\
=&\Upsilon_i^\top(t)D_0^\top(t)P_i(t)D_0(t)\Upsilon_i(t)-\widehat{C}_{0,i}^\top(t)P_i(t)D_0(t)\Upsilon_i(t)-\Upsilon_i^\top(t)D_0^\top(t)P_i(t)\widehat{C}_{0,i}(t),\\
&\widehat{Q}_{i-1}(t)-\widehat{Q}_i(t)\\
=&\Upsilon_i^\top(t)R(t)\Upsilon_i(t)
+\Psi_i^\top(t)R(t)\Upsilon_i(t)
+\Upsilon_i^\top(t)R(t)\Psi_i(t).
\end{aligned}
\end{equation*}
From \eqref{Delta2}, we obtain
\begin{equation}
\begin{aligned}\label{di}
&-[\dot{\Delta}_i(t)+\Delta_i(t)\widehat{A}_i(t)+\widehat{A}_i^\top(t)\Delta_i(t)+\widehat{C}_i^\top(t)\Delta_i(t)\widehat{C}_i(t)+\widehat{C}_{0,i}^\top(t)\Delta_i(t)\widehat{C}_{0,i}(t)]\\
=&\Upsilon_i^\top(t)(R(t)+D^\top(t)P_i(t)D(t)+D_0^\top(t)P_i(t)D_0(t))\Upsilon_i(t)
\geq 0.
\end{aligned}
\end{equation}
Using $\Delta_i(T)=0$ and Lemma 7.3 of \cite{YZ1999} again, we have  $P_{i}(t)\geq P_{i+1}(t)$, $t\in[0,T]$. Therefore, $\{P_i(\cdot)\}$ is a decreasing sequence in $C([0,T];\mathcal{S}_+^n)$, and thus has a limit denoted by $P(\cdot)$. Now by integrating equation \eqref{Pi+1} on the interval $[t,T]$ and then sending $i$ to infinite, with the help of dominate convergence theorem, one can show that $P(\cdot)$ solves \eqref{P2} (and hence \eqref{P-1}).
The proof is complete.
\end{proof}

\begin{remark}
One may applying the method of Bellman's Pinciple of quasi linearization and monotone convergence result of symmetric matrices as in \cite{W1968} (see also \cite{P1992,T2003}) to prove the well-posdeness of Riccati equation \eqref{P-1}. Here, we use pure algebraic technique which is different to \cite{W1968} to obtain the well-posedness.
\end{remark}

Next, we give the well-posedness of Riccati system \eqref{RCC}.
From Lemma \ref{Plemma}, we know that equation \eqref{P-1} admits a unique solution $P(\cdot)\in C([0,T];\mathcal{S}_+^n)$, once $\Gamma(\cdot)$ is uniquely solved, the well-posedness of $\Phi(\cdot)$ holds by noting that the equation for $\Phi(\cdot)$ is just an ODE.
\begin{theorem}\label{RCCtheorem}
If $\Gamma(\cdot)$ is uniquely solved, then system \eqref{RCC} admits a unique solution $(P(\cdot),\Gamma(\cdot),
\Phi(\cdot))\in $ $C([0,T];\mathcal{S}_+^n\times \mathcal{S}^n\times \mathbb{R}^n)$.
\end{theorem}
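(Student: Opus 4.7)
The plan is to exploit the triangular structure of the Riccati system \eqref{RCC}: the equation for $P(\cdot)$ is self-contained, the equation for $\Gamma(\cdot)$ depends only on $P(\cdot)$, and the equation for $\Phi(\cdot)$ depends only on $P(\cdot)$ and $\Gamma(\cdot)$. So I would attack the three components in this order and patch them together at the end.

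First, for $P(\cdot)$ I would simply invoke Lemma \ref{Plemma}, which already delivers a unique solution $P(\cdot)\in C([0,T];\mathcal{S}_+^n)$ to \eqref{P-1}. An immediate by-product that I will need later is that $\Sigma(t)=R(t)+D^{\top}(t)P(t)D(t)+D_0^{\top}(t)P(t)D_0(t)\geq R(t)\gg 0$, so $\Sigma(\cdot)^{-1}$ is well-defined, continuous on $[0,T]$, and uniformly bounded by $|R(\cdot)^{-1}|_{L^\infty}$ (using $R(\cdot)\gg 0$ from \textup{(A2)}). Next, the hypothesis of the theorem hands me a unique $\Gamma(\cdot)\in C([0,T];\mathcal{S}^n)$ solving \eqref{Gamma-1}; I would only remark here that this is the only non-trivial input to the statement and is exactly the content of the subsequent Theorem \ref{RCCtheorem Gamma}.

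With $P(\cdot)$ and $\Gamma(\cdot)$ in hand, the equation \eqref{Phi-11} for $\Phi(\cdot)$ is a linear inhomogeneous ODE of the form $\dot{\Phi}(t)+M(t)\Phi(t)+N(t)=0$, $\Phi(T)=0$, where
\begin{equation*}
M(t)=A^{\top}(t)-\bigl(P(t)B(t)+C^{\top}(t)P(t)D(t)+C_0^{\top}(t)P(t)D_0(t)+\Gamma(t)B(t)\bigr)\Sigma(t)^{-1}B^{\top}(t)
\end{equation*}
and $N(t)$ collects the remaining terms involving $\sigma(\cdot)$, $\sigma_0(\cdot)$ and $b(\cdot)$. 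Because $A,B,C,C_0,D,D_0,b,\sigma,\sigma_0\in L^{\infty}$ by \textup{(A1)}, and because $P,\Gamma\in C([0,T];\mathcal{S}^n)$ and $\Sigma^{-1}$ is bounded, both $M(\cdot)$ and $N(\cdot)$ lie in $L^{\infty}(0,T;\mathbb{R}^{n\times n})$ and $L^{\infty}(0,T;\mathbb{R}^{n})$, respectively. Classical theory for linear ODEs with bounded coefficients (variation of constants, or a direct Gronwall argument for uniqueness) then yields a unique $\Phi(\cdot)\in C([0,T];\mathbb{R}^n)$.

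Concatenating the three pieces gives existence and uniqueness of $(P(\cdot),\Gamma(\cdot),\Phi(\cdot))\in C([0,T];\mathcal{S}_+^n\times\mathcal{S}^n\times\mathbb{R}^n)$. In this proof there is no genuine obstacle: the only substantive difficulty in the whole Riccati system has been \emph{assumed away} by hypothesis, namely the solvability of the non-standard equation \eqref{Gamma-1} for $\Gamma(\cdot)$, which will be addressed separately in Theorem \ref{RCCtheorem Gamma}. The remaining work is bookkeeping — verifying that the coefficients feeding each successive equation are regular enough to apply the previously established existence/uniqueness results.
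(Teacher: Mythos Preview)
Your proposal is correct and follows exactly the approach the paper takes: the paper remarks (immediately before the theorem) that Lemma~\ref{Plemma} handles $P(\cdot)$, the hypothesis handles $\Gamma(\cdot)$, and then ``the well-posedness of $\Phi(\cdot)$ holds by noting that the equation for $\Phi(\cdot)$ is just an ODE.'' Your write-up simply fleshes out this observation with the routine regularity checks on the coefficients.
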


\begin{remark}\label{RCCtheorem}
Under certain conditions, $\Gamma(\cdot)$ is uniquely solved. One trivial example is that $n = k = 1$, in which case equation \eqref{Gamma-1} becomes a one-dimensional ODE whose well-posedness is obvious.
 \end{remark}

Now, let us consider some non-trivial case such that \eqref{Gamma-1}  has a unique solution. Firstly, to guarantee the  symmetry of Riccati equation \eqref{Gamma-1} (see the discussion before Lemma \ref{Plemma}), one natural assumption is that
$\alpha(\cdot)=\delta I$ and $\beta(\cdot)=\beta_0(\cdot)=0$, where $\delta$ is a constant. We will keep this assumption in the following arguments.

The following theorem gives a general condition to guarantee the well-posedness of Riccati equation \eqref{Gamma-1}.
\begin{theorem}\label{RCCtheorem Gamma}
Suppose that  $\alpha(\cdot)=\delta I$, $\beta(\cdot)=\beta_0(\cdot)=0$, if
\begin{equation}\label{condition for Pi}
-C^{\top}PD\Sigma^{-1}D_0^{\top}PC_0-C_0^{\top}PD_0\Sigma^{-1}D^{\top}PC\ge0,
\end{equation}
then  Riccati equation \eqref{Gamma-1} admits a unique solution.
\end{theorem}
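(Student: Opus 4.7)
The plan is to reduce Riccati equation \eqref{Gamma-1}, under the simplifications $\alpha=\delta I$ and $\beta=\beta_0=0$, to a \emph{standard} symmetric Riccati equation with non-negative data, so that its well-posedness follows from the iterative scheme already developed in the proof of Lemma \ref{Plemma}. First I would substitute these hypotheses into \eqref{Gamma-1}; the asymmetric $\beta$-driven terms all drop out, leaving
\begin{equation*}
\dot{\Gamma}+\Gamma\tilde{A}+\tilde{A}^{\top}\Gamma+\delta(P+\Gamma)-\Gamma B\Sigma^{-1}B^{\top}\Gamma-Q=0,\quad \Gamma(T)=0,
\end{equation*}
with $\tilde{A}:=A-B\Sigma^{-1}(B^{\top}P+D^{\top}PC+D_0^{\top}PC_0)$.

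The central step is the change of variable $\Pi:=\Gamma+P$, which yields the terminal condition $\Pi(T)=G\ge 0$. Using the $P$-equation \eqref{P-1} to cancel the resulting cross-terms, a direct algebraic computation shows that $\Pi$ satisfies a standard Riccati equation
\begin{equation*}
\dot{\Pi}+\Pi\hat{A}+\hat{A}^{\top}\Pi+\hat{Q}-\Pi B\Sigma^{-1}B^{\top}\Pi=0,\quad \Pi(T)=G,
\end{equation*}
with $\hat{A}=A+\tfrac{\delta}{2}I-B\Sigma^{-1}(D^{\top}PC+D_0^{\top}PC_0)$ and
\begin{equation*}
\hat{Q}=C^{\top}PC+C_0^{\top}PC_0-(C^{\top}PD+C_0^{\top}PD_0)\Sigma^{-1}(D^{\top}PC+D_0^{\top}PC_0).
\end{equation*}

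The crux is to verify that $\hat{Q}\ge 0$. I would regroup the expansion of $\hat{Q}$ as
\begin{equation*}
\hat{Q}=C^{\top}(P-PD\Sigma^{-1}D^{\top}P)C+C_0^{\top}(P-PD_0\Sigma^{-1}D_0^{\top}P)C_0-C^{\top}PD\Sigma^{-1}D_0^{\top}PC_0-C_0^{\top}PD_0\Sigma^{-1}D^{\top}PC.
\end{equation*}
The last two off-diagonal terms are non-negative by the hypothesis \eqref{condition for Pi}. For the first two diagonal blocks, a Schur-complement argument gives $P-PD\Sigma^{-1}D^{\top}P\ge 0$, because $D^{\top}PD\le \Sigma=R+D^{\top}PD+D_0^{\top}PD_0$ (using $R\gg 0$ and $D_0^{\top}PD_0\ge 0$), and similarly $P-PD_0\Sigma^{-1}D_0^{\top}P\ge 0$. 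Hence $\hat{Q}\ge 0$.

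With $\hat{Q}\ge 0$, $B\Sigma^{-1}B^{\top}\ge 0$, and $G\ge 0$ in hand, the iterative scheme from the proof of Lemma \ref{Plemma} (or equivalently Lemma~7.3 of \cite{YZ1999}) produces a unique $\Pi(\cdot)\in C([0,T];\mathcal{S}_+^n)$. Setting $\Gamma:=\Pi-P$ yields the desired solution of \eqref{Gamma-1}, and uniqueness of $\Gamma$ follows by a standard Gronwall estimate applied to the difference of two candidate solutions. The main obstacle will be the algebraic bookkeeping in the change of variable: one must carefully track how every term produced by $\Gamma=\Pi-P$ combines with the corresponding terms from the $P$-equation, so that after cancellation $\hat{Q}$ collapses into exactly the form in which the hypothesis \eqref{condition for Pi} can be applied to establish non-negativity.
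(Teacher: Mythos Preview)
Your proposal is correct and follows essentially the same route as the paper: introduce $\Pi:=P+\Gamma$, derive a standard symmetric Riccati equation for $\Pi$, verify that the zeroth-order term $\hat{Q}$ is nonnegative by splitting it into the two ``diagonal'' pieces $C^{\top}(P-PD\Sigma^{-1}D^{\top}P)C$, $C_0^{\top}(P-PD_0\Sigma^{-1}D_0^{\top}P)C_0$ plus the cross-terms handled by hypothesis \eqref{condition for Pi}, and then invoke classical Riccati well-posedness. The only cosmetic difference is that the paper proves $P-PD\Sigma^{-1}D^{\top}P\ge 0$ via the explicit factorization $\Lambda=P^{1/2}DR^{-1/2}$, $\widetilde{\Lambda}=P^{1/2}D_0R^{-1/2}$ and the identity $I-\Lambda(I+\Lambda^{\top}\Lambda)^{-1}\Lambda^{\top}=(I+\Lambda\Lambda^{\top})^{-1}$, whereas you reach the same conclusion by a Schur-complement argument; both are valid and equivalent in strength.
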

\begin{proof}
Inspired by Yong \cite{Y2013}, we transform the solvability of $\Gamma(\cdot)$ to the solvability of another Riccati equation $\Pi(\cdot)$. To do this, we set $\Pi(\cdot):=P(\cdot)+\Gamma(\cdot)$, then from  system  \eqref{RCC}, the function $\Pi(\cdot)$ solves the following equation (with $t$ being suppressed)
\begin{equation}
\left\{
\begin{aligned}\label{Pi}
&\dot{\Pi}+\Pi[A-B\Sigma^{-1}(D^{\top}PC+D_0^{\top}PC_0)]\\
&\qquad+[A-B\Sigma^{-1}(D^{\top}PC+D_0^{\top}PC_0)]^{\top}\Pi+\delta\Pi\\
&\qquad+C^{\top}(P
-PD\Sigma^{-1}D^{\top}P)C+C_0^{\top}(P
-PD_0\Sigma^{-1}D_0^{\top}P)C_0\\
&\qquad-C^{\top}PD\Sigma^{-1}D_0^{\top}PC_0-C_0^{\top}PD_0\Sigma^{-1}D^{\top}PC\\
&\qquad-\Pi B\Sigma^{-1}B^{\top}\Pi=0,\\
&\Pi(T)=G.
\end{aligned}
\right.
\end{equation}
Recalling that
$\Sigma=R+D^{\top}PD+D_0^{\top}PD_0$,
we have
\begin{equation*}
\begin{aligned}
&P-PD\Sigma^{-1}D^{\top}P
=P-PD(R+D^{\top}PD+D_0^{\top}PD_0)^{-1}D^{\top}P\\
=&P^{\frac{1}{2}}\big[I-P^{\frac{1}{2}}DR^{-\frac{1}{2}}[I+R^{-\frac{1}{2}}(D^{\top}P^{\frac{1}{2}}P^{\frac{1}{2}}D
+D_0^{\top}P^{\frac{1}{2}}P^{\frac{1}{2}}D_0)R^{-\frac{1}{2}}]^{-1} R^{-\frac{1}{2}}D^{\top}P^{\frac{1}{2}}\big]P^{\frac{1}{2}}\\
=&P^{\frac{1}{2}}[I-\Lambda(I+\Lambda^{\top}\Lambda+\widetilde{\Lambda}^{\top}\widetilde{\Lambda})^{-1}\Lambda^{\top}]P^{\frac{1}{2}},
\end{aligned}
\end{equation*}
where
$\Lambda=P^{\frac{1}{2}}DR^{-\frac{1}{2}}$ and $\widetilde{\Lambda}=P^{\frac{1}{2}}D_0R^{-\frac{1}{2}}.$

From
$(I+\Lambda^{\top}\Lambda+\widetilde{\Lambda}^{\top}\widetilde{\Lambda})^{-1}\leq (I+\Lambda^{\top}\Lambda)^{-1}$,
we have
\begin{equation*}
\begin{aligned}
I-\Lambda(I+\Lambda^{\top}\Lambda+\widetilde{\Lambda}^{\top}\widetilde{\Lambda})^{-1}
\Lambda^{\top}\geq I-\Lambda(I+\Lambda^{\top}\Lambda)^{-1}\Lambda^{\top}.
\end{aligned}
\end{equation*}
By noting that $I-\Lambda(I+\Lambda^{\top}\Lambda)^{-1}\Lambda^{\top}=(I+\Lambda\Lambda^{\top})^{-1}$,
we have
\begin{equation*}
\begin{aligned}
P-PD\Sigma^{-1}D^{\top}P
\geq & P^{\frac{1}{2}}(I+\Lambda\Lambda^{\top})^{-1}P^{\frac{1}{2}}\\
=&P^{\frac{1}{2}}(I+P^{\frac{1}{2}}DR^{-1}D^{\top}P^{\frac{1}{2}})^{-1}P^{\frac{1}{2}}\geq 0.
\end{aligned}
\end{equation*}
Therefore, the following inequality holds
\begin{equation*}
C^{\top}(P-PD\Sigma^{-1}D^{\top}P)C\geq 0.
\end{equation*}
Similarly, it holds also that
\begin{equation*}
C_0^{\top}(P-PD_0\Sigma^{-1}D_0^{\top}P)C_0\geq 0.
\end{equation*}

Recall \eqref{condition for Pi}, by the standard result (see Theorem 7.2 in \cite{YZ1999}), we have that equation \eqref{Pi} admits a unique solution $\Pi(\cdot)\in C([0,T];\mathcal{S}_+^n)$.
By recalling  $\Pi=P+\Gamma$ and Lemma \ref{Plemma}, we get the well-posedness of equation \eqref{Gamma-1}.
\end{proof}

\begin{remark} Condition \eqref{condition for Pi} can be easily satisfied, for example, $C_0=0$ or $D_0=0$ or
$C_0=-C, D_0=D$ or $C_0=C, D_0=-D$.
\end{remark}

\begin{remark}\label{remark for partial information structure}
Comparing with Huang and Wang \cite{HW2016}, our diffusion terms of \eqref{state} can depend both on the state variable and control variable. This makes the Riccati equations are more challenging to solve. Our results generalize the one of \cite{HW2016} in non-trivial manner.
Moreover, if let $C=D=D_0=\beta=\beta_0=0$, our results will be consistent with \cite{HW2016}.
\end{remark}

\section{Application in Network Security Model}\label{sec4}
With the development of information technology, there exist high-frequency network communications. Accompanying that technology is the network security problems, and the  readers can refer to \cite{ST2016,ZP2021} and the reference therein for the study of network security model by using MFGs theory.
In this section, we investigate that how the investments of the users affect network security by applying our theoretical results.

Consider a network with $N$ users $\mathcal{A}_i$, for $1\leq i\leq N$. We suppose that each user $\mathcal{A}_i$ is willing to improve internet security level at first and individual users only know their own information.
Let $x_i(\cdot)$ be the safety level of the network, which is characterized by the following SDE driven by two mutually independent standard Brownian motions $W_i$, and $W_0$,
\begin{equation*}
\left\{
\begin{aligned}
dx_{i}(t)=&[ax_{i}(t)+bu_{i}(t)+\delta x^{(N)}(t)+k]dt\\
&+[cx_{i}(t)+du_{i}(t)+\sigma ]dW_{i}(t)+[d_0u_{i}(t)+\sigma_0]dW_{0 }(t), \\
x_{i}(0)=&x.
\end{aligned}
\right.
\end{equation*}
Here, $W_i$, and $W_0$ can be interpreted as the individual noise and common noise of user $\mathcal{A}_i$. The control input
$u_i(\cdot)$  of user $\mathcal{A}_i$  represents the investment in security maintenance, such as purchasing antivirus software and updating computer hardware. $x^{(N)}(\cdot)=\frac{1}{N}\sum_{j=1}^{N}x_j(\cdot)$ is the average security level of all users which represents that the levels of security of all users are interacted, such as if a user's level of security is reduced, he may spread the virus to other users when he communicates with them online.

Each user is devoted to minimizing the following performance criteria
\begin{equation*}
\mathcal{J}_{i}(u_{i}(\cdot ),u_{-i}(\cdot ))=\frac{1}{2}\mathbb{E}\Big\{\int_{0}^{T}\big[
q\big(x_{i}(t)-x^{(N)}(t)\big)^2+ ru_{i}^2(t)\big]dt+gx_{i}^2(T) \Big\},
\end{equation*}
where $u_{-i}=(u_1,\ldots,u_{i-1},u_{i+1},\ldots,u_N)$, $1\leq i\leq N$ and $q $, $g$ are nonnegative constants and $r>0$.
Obviously, this network security model is a kind of LQ MFGs studied in section \ref{sec2}.

By applying Theorem \ref{theoorem Riccati},
we obtain that the decentralized strategy is given by, for $1\leq i\leq N$,
\begin{equation}\label{net u}
\bar{u}_i(t)=-\frac{(bP(t)+cdP(t))\hat{\bar{z}}_i(t)+b\Gamma(t) \mathbb{E}[m(t)]+b\Phi(t)+d\sigma P(t)+d_0\sigma_0P(t)}{r+d^2P(t)+d_0^2P(t)},
\end{equation}
where $P(\cdot),\Gamma(\cdot),\Phi(\cdot),\mathbb{E}[m(\cdot)]$ solve respectively (see \eqref{P-1}-\eqref{m00},
\begin{equation*}
\begin{aligned}
&\dot{P}(t)+(2a+c^2)P(t)+q-\frac{(b+cd)^2P^2(t)}{r+d^2P(t)+d_0^2P(t)}=0,\\
&P(T)=g,
\end{aligned}
\end{equation*}
and
\begin{equation*}
\begin{aligned}
&\dot{\Gamma}(t)+2\Big(a-\frac{b(b+cd)P(t)}{r+d^2P(t)+d_0^2P(t)}\Big)\Gamma(t)\\
&+(P(t)+\Gamma(t))\delta -\frac{b^2\Gamma(t)^2}{r+d^2P(t)+d_0^2P(t)}-q=0,\\
&  \Gamma(T)=0,
\end{aligned}
\end{equation*}
and
\begin{equation*}
\begin{aligned}
&\dot{\Phi}(t)+(P(t)+\Gamma(t))k\\
&+\left(a-\frac{b(b+cd)P(t)}{r+d^2P(t)+d_0^2P(t)}-\frac{b^2\Gamma(t)}{r+d^2P(t)+d_0^2P(t)}\right)\Phi(t)\\
&+\left(c-\frac{d(b+cd)P(t)}{r+d^2P(t)+d_0^2P(t)}-\frac{bd\Gamma(t)}{r+d^2P(t)+d_0^2P(t)}\right)P(t)\sigma\\
&-\left(\frac{d_0(b+cd)P(t)}{r+d^2P(t)+d_0^2P(t)}+\frac{bd_0\Gamma(t)}{r+d^2P(t)+d_0^2P(t)}\right)P(t)\sigma_0
=0,\\
&\Phi(T)=0,
\end{aligned}
\end{equation*}
and
\begin{equation*}
\begin{aligned}\label{net Em}
d\mathbb{E}[m(t)]=&\Bigg\{\left[a+\delta-\frac{b(bP(t)+cdP(t)+b\Gamma(t))}{r+d^2P(t)+d_0^2P(t)}\right]\mathbb{E}[m(t)]\\
&+k-\frac{b(b\Phi(t)+d\sigma P(t)+d_0\sigma_0P(t))}{r+d^2P(t)+d_0^2P(t)}
\Bigg\}dt,\\
\mathbb{E}[m(0)]=&x.
\end{aligned}
\end{equation*}
The optimal filtering $\hat{\bar{z}}_i(\cdot)$ is given by
\begin{equation*}
\left\{
\begin{aligned}
d\hat{\bar{z}}_i(t)=&\Bigg\{\left(a-\frac{b(b+cd)P(t)}{r+d^2P(t)+d_0^2P(t)}\right)\hat{\bar{z}}_i(t)+\left(\delta-\frac{b^2\Gamma(t)}{r+d^2P(t)+d_0^2P(t)}\right)\mathbb{E}[m(t)]\\
&-\frac{b(b\Phi(t)+d\sigma P(t)+d_0\sigma_0P(t))}{r+d^2P(t)+d_0^2P(t)}+k\Bigg\}dt\\
&+\Bigg\{\left(c-\frac{d(b+cd)P(t)}{r+d^2P(t)+d_0^2P(t)}\right)\hat{\bar{z}}_i(t)-\frac{db\Gamma(t)}{r+d^2P(t)+d_0^2P(t)}\mathbb{E}[m(t)]\\
&-\frac{d(b\Phi(t)+d\sigma P(t)+d_0\sigma_0P(t))}{r+d^2P(t)+d_0^2P(t)}+\sigma\Bigg\}dW_i(t),\\
\hat{\bar{z}}_i(0)=&x.
\end{aligned}
\right.
\end{equation*}

In order to better illustrate the application in network security, and solve explicitly the Nash equilibrium, we set $a=b=\delta=\sigma=\sigma_0=r=g=1$, $c=d=d_0=k=0$ and $q=3$.
From \eqref{net u}, the decentralized strategy becomes
\begin{equation*}
\bar{u}_i(t)=-\big(P(t)\hat{\bar{z}}_i(t)+\Gamma(t)\mathbb{E}[m(t)]+\Phi(t)\big),
\end{equation*}
where
\begin{equation}\label{net P-L-Phi}
\left\{
\begin{aligned}
&\dot{P}(t)+2P(t)+3-P^2(t)=0,\\
&\dot{\Gamma}(t)+2\big(1-P(t)\big)\Gamma(t)
+\big(P(t)+\Gamma(t)\big) -\Gamma(t)^2-3=0,\\
&\dot{\Phi}(t)+\big(1-P(t)-\Gamma(t)\big)\Phi(t)=0,\\
&P(T)=1,\ \Gamma(T)=0,\ \Phi(T)=0,
\end{aligned}
\right.
\end{equation}
and
\begin{equation*}
\left\{
\begin{aligned}\label{m}
dm(t)&=\Big[2m(t)-\big(P(t)+\Gamma(t)\big)\mathbb{E}[m(t)]-\Phi(t)\Big]dt,\\
m(0)&=x.
\end{aligned}
\right.
\end{equation*}
Then, we have $m(t)=\mathbb{E}[m(t)]$, thus
\begin{equation*}
\left\{
\begin{aligned}
dm(t)&=\Big[\big(2-P(t)-\Gamma(t)\big)m(t)-\Phi(t)\Big]dt,\\
m(0)&=x.
\end{aligned}
\right.
\end{equation*}
Moreover, the optimal filtering $\hat{\bar{z}}_i(\cdot)$ is given by
\begin{equation*}
\left\{
\begin{aligned}
d\hat{\bar{z}}_i(t)=&\Big[\big(1-P(t)\big)\hat{\bar{z}}_i(t)+\big(1-\Gamma(t)\big)\mathbb{E}[m(t)]
-\Phi(t)\Big]dt\\
&+dW_i(t),\\
\hat{\bar{z}}_i(0)=&x.
\end{aligned}
\right.
\end{equation*}
From \eqref{net P-L-Phi}, it is easy to get
\begin{equation*}
P(t)=\frac{3-e^{4(t-T)}}{1+e^{4(t-T)}},\quad
\Phi(t)=0, \quad t\in [0,T].
\end{equation*}
Moreover, let $\Pi(\cdot):=P(\cdot)+\Gamma(\cdot)$, then $\Pi(\cdot)$ solves the following Riccati equation
\begin{equation*}
\left\{
\begin{aligned}
&\dot{\Pi}(t)+3\Pi(t)-\Pi^2(t)=0,\quad  t\in [0,T]\\
&\Pi(T)=1,
\end{aligned}
\right.
\end{equation*}
which yields that $\Gamma(t)=\Pi(t)-P(t)=\frac{3}{1+2e^{3(t-T)}}-\frac{3-e^{4(t-T)}}{1+e^{4(t-T)}}$.

In this setting, the decentralized strategy is given by
\begin{equation*}
\begin{aligned}
\bar{u}_i(t)=&\frac{e^{4(t-T)}-3}{1+e^{4(t-T)}}\big(\hat{\bar{z}}_i(t)-m(t)\big)
-\frac{3}{1+2e^{3(t-T)}}m(t),
\end{aligned}
\end{equation*}
where
\begin{equation*}
\begin{aligned}
m(t)=&xe^{2t-\int_{0}^{t}\frac{3}{1+2e^{3(s-T)}}ds},\\
\hat{\bar{z}}_i(t)=&xe^{t-\int_{0}^{t}\frac{3-e^{4(s-T)}}{1+e^{4(s-T)}}ds}
+\int_{0}^{t}e^{\int_{s}^{t}(1-\frac{3-e^{4(u-T)}}{1+e^{4(u-T)}})du}dW_i(s)\\
&+x\int_{0}^{t}\big(1-\frac{3}{1+2e^{3(t-T)}}+\frac{3-e^{4(s-T)}}{1+e^{4(s-T)}}\big)
e^{2s-\int_{s}^{t}\frac{3-e^{4(u-T)}}{1+e^{4(u-T)}}du-\int_{0}^{s}\frac{3}{1+2e^{3(u-T)}}du}ds.
\end{aligned}
\end{equation*}

Fig 1 gives the numerical solutions of $P(\cdot)$, $\Gamma(\cdot)$ and $\Phi(\cdot)$. One can see that $P(\cdot)$ is gradually decreasing, $\Phi(\cdot)$ is always 0. $\Gamma(\cdot)$ changes slowly at the beginning, and there is an upward trend near the terminal time. Fig 2 gives the numerical solution of $m(\cdot)$ which shows that  $m(\cdot)$ decreases first and then increases. Fig 3 presents the optimal filtering of decentralized states of 50 users. By comparing Fig 3 and Fig 2, we can find the overall trend of the states of 50 users is consistent with the trend  of $m(\cdot)$, moreover Fig 3 shows  clearly  the fluctuation of the individual noise. Fig 4 draws the control strategies of 50 users, we can find that the investment in security
maintenance of each user is increasing, although they may have different fluctuations.

In general, it is difficult to find an explicit solution. To better illustrate our results, we give some numerical simulations below. Suppose that $N=50$, $T=1$. Set the initial data as $x=1$, $a=1.5$, $b=2.8$, $\delta=1$, $k=2$, $c=0.6$, $d=2.5$, $d_0=6$, $\sigma=0.8$, $\sigma_0=0.3$, $q=3.3$, $r=2.5$, $g=5$.
Figure \ref{fig1} gives the numerical solutions of $P(\cdot)$, $\Gamma(\cdot)$ and $\Phi(\cdot)$. Figure \ref{fig2} gives the numerical solutions of $m(\cdot)$ and $\mathbb{E}[m(\cdot)]$.
Figure \ref{fig3} and Figure \ref{fig4} show, respectively, the optimal filtering of decentralized states and control strategies of 50 users.

\begin{figure}[h]
\begin{minipage}[t]{0.5\linewidth}
\centering
\includegraphics[width=3.3in]{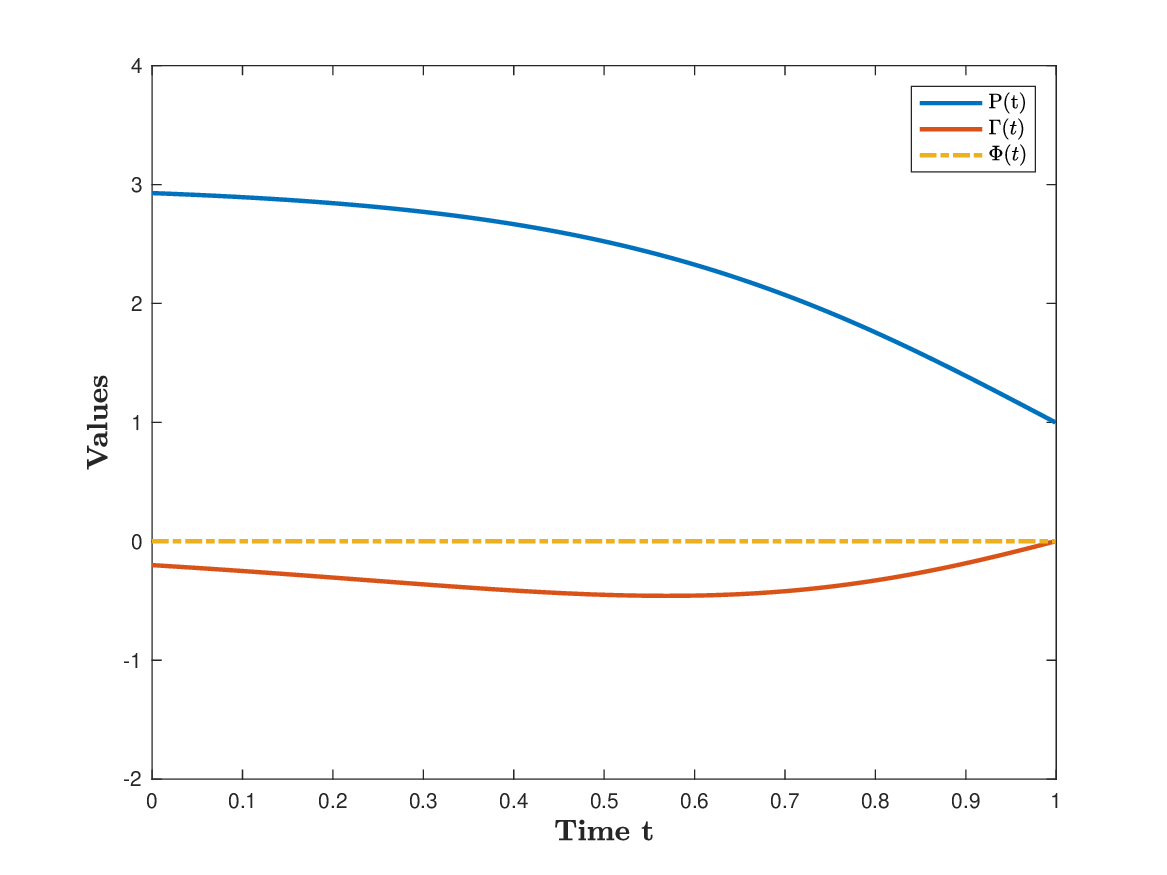}
\caption{The numerical solutions of Riccati equations $P(t)$, $\Gamma(t)$ and $\Phi(t)$.}
\label{fig1}
\end{minipage}%
\begin{minipage}[t]{0.5\linewidth}
\centering
\includegraphics[width=3.3in]{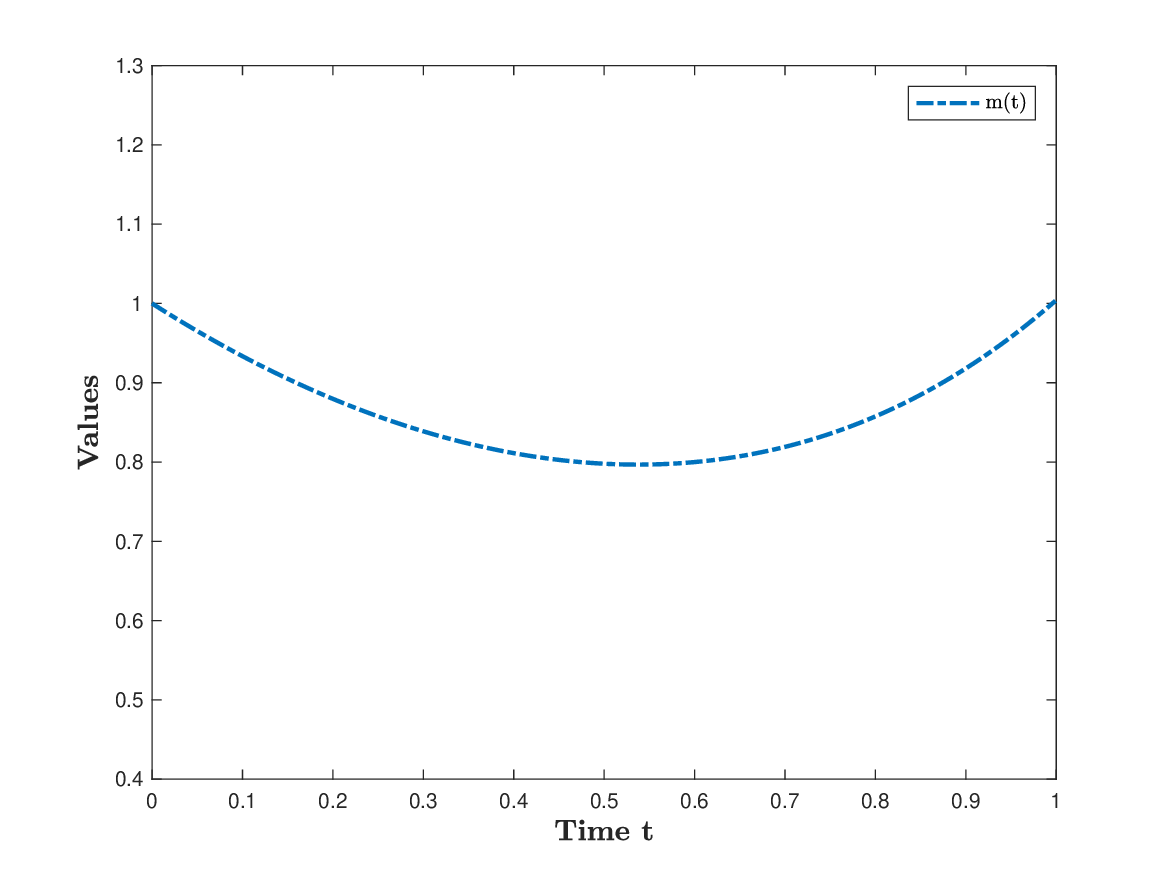}
\caption{The numerical solutions of state- average limit $m(t)$.}
\label{fig2}
\end{minipage}
\end{figure}

\begin{figure}[h]
\begin{minipage}[t]{0.5\linewidth}
\centering
\includegraphics[width=3.3in]{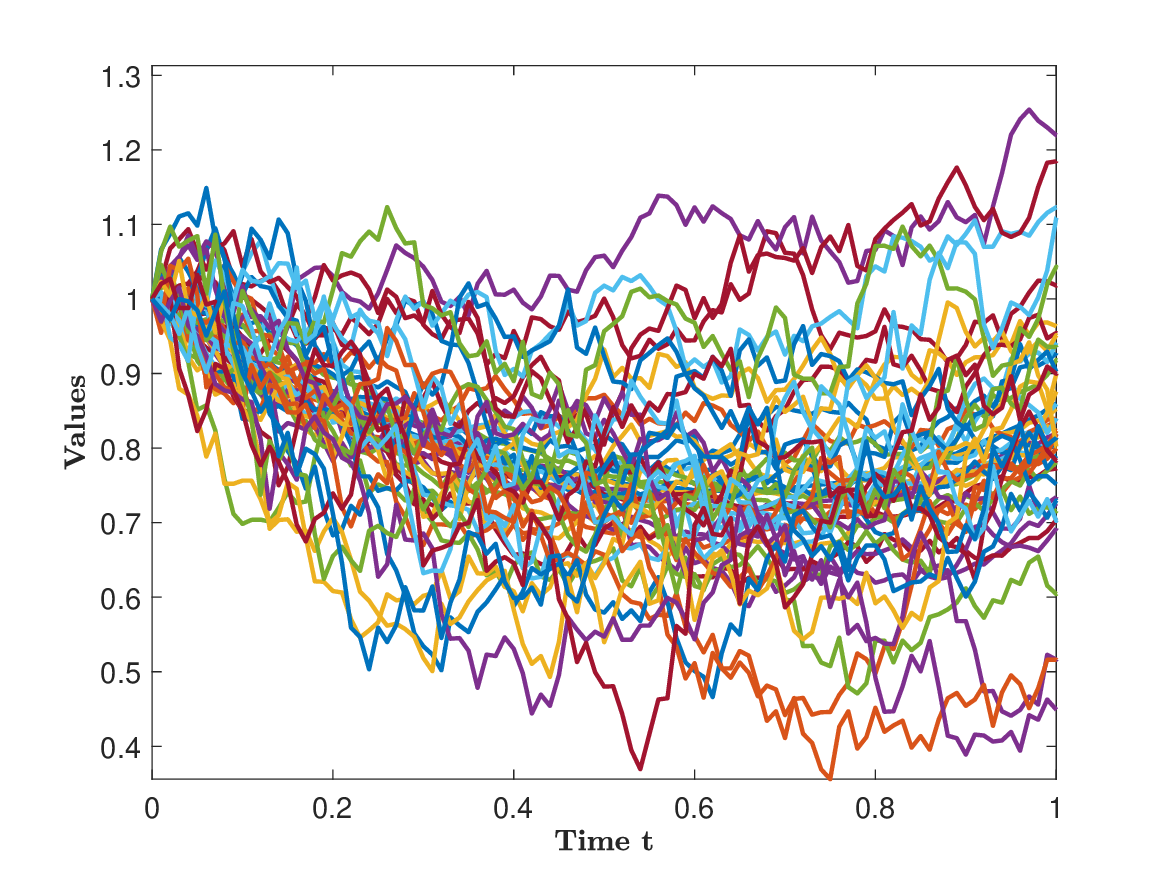}
\caption{The optimal filtering of decentralized states $\hat{\bar{z}}_i(t)$.}
\label{fig3}
\end{minipage}%
\begin{minipage}[t]{0.5\linewidth}
\centering
\includegraphics[width=3.3in]{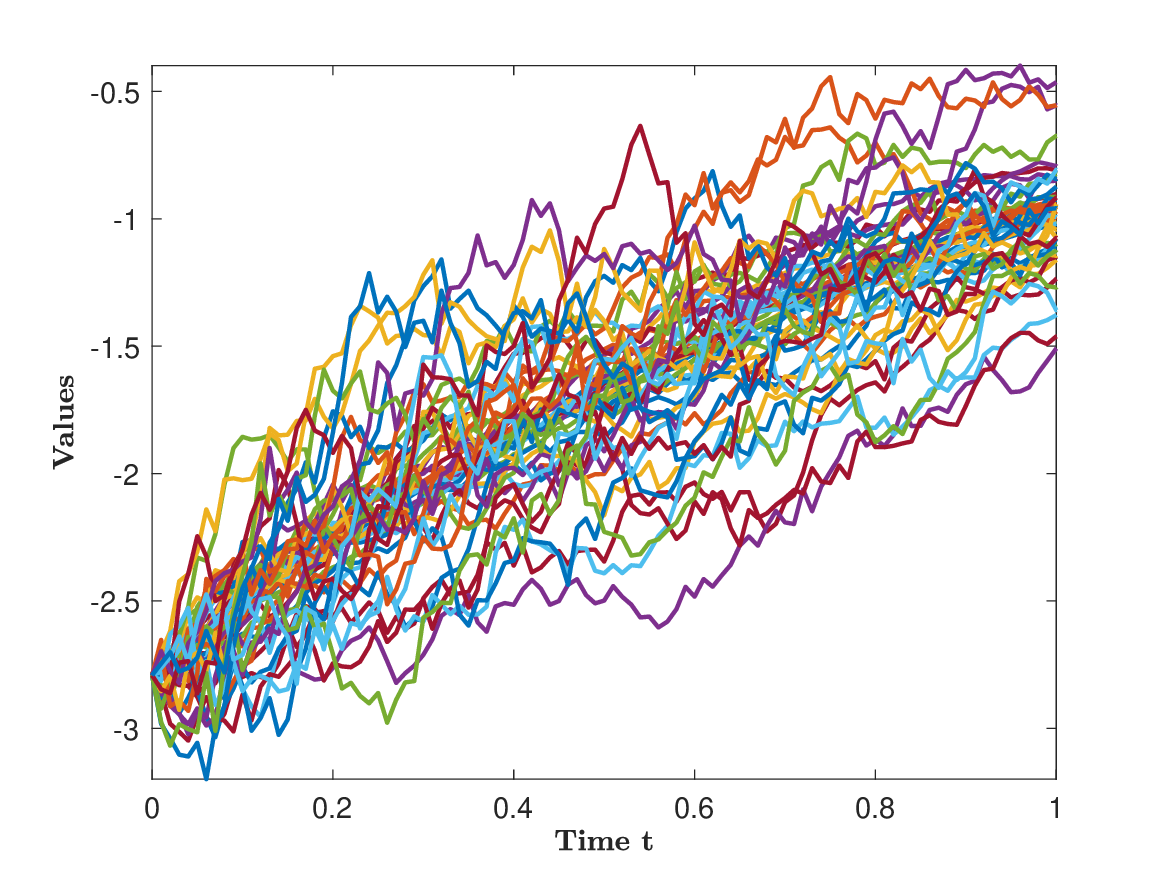}
\caption{The decentralized control strategies $\bar{u}_i(t)$.}
\label{fig4}
\end{minipage}
\end{figure}

\begin{figure}[h]
\begin{minipage}[t]{0.5\linewidth}
\centering
\includegraphics[width=3.3in]{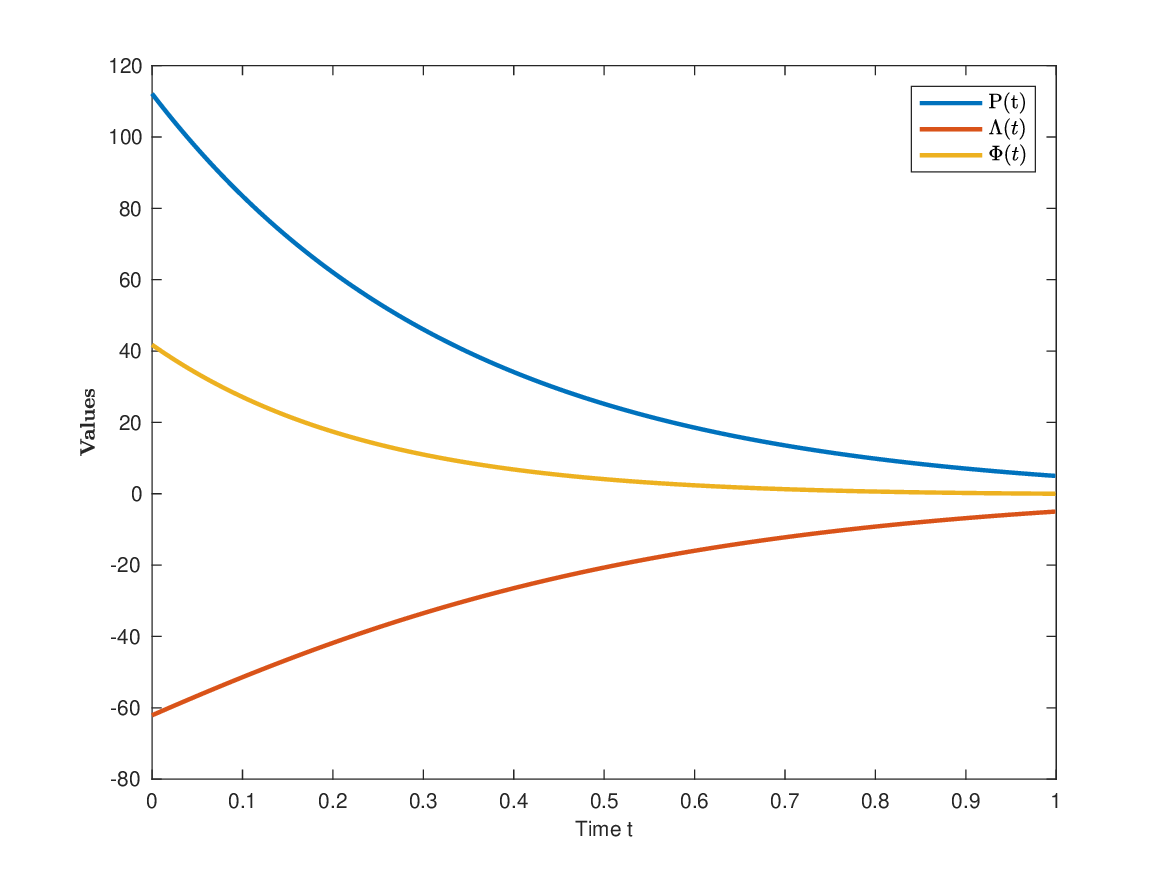}
\caption{The numerical solutions of Riccati equations $P(t)$, $\Gamma(t)$ and $\Phi(t)$.}
\label{fig1}
\end{minipage}%
\begin{minipage}[t]{0.5\linewidth}
\centering
\includegraphics[width=3.3in]{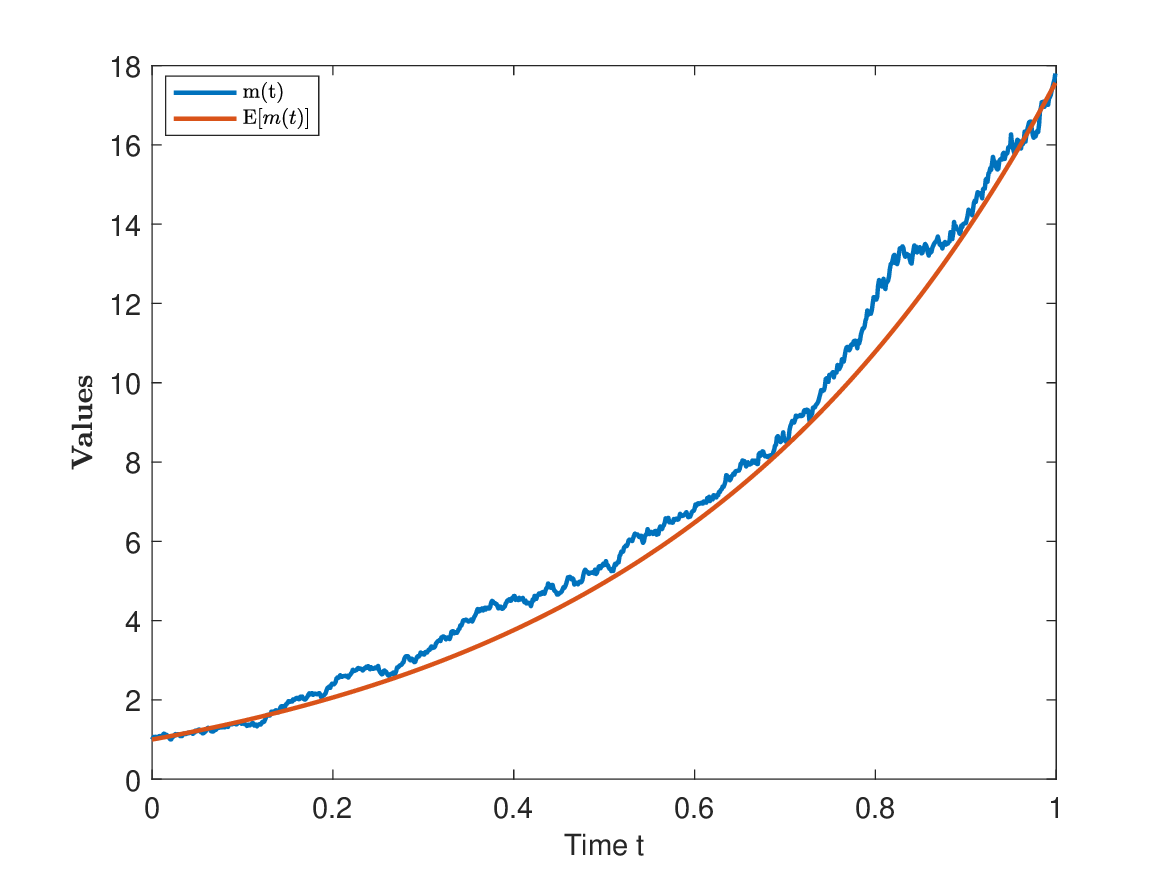}
\caption{The numerical solutions of state-average limit $m(t)$ and $\mathbb{E}[m(t)]$.}
\label{fig2}
\end{minipage}
\end{figure}

\begin{figure}[h]
\begin{minipage}[t]{0.5\linewidth}
\centering
\includegraphics[width=3.3in]{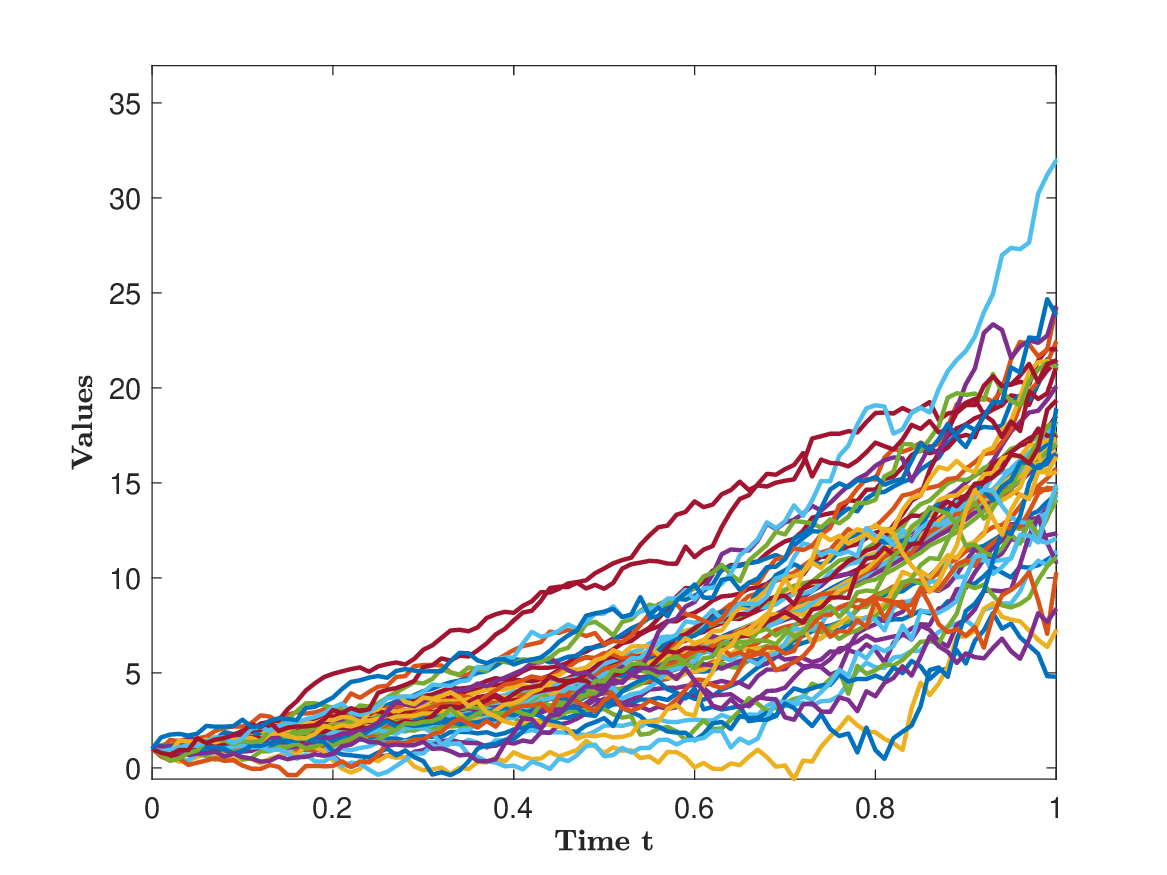}
\caption{The optimal filtering of decentralized states $\hat{\bar{z}}_i(t)$.}
\label{fig3}
\end{minipage}%
\begin{minipage}[t]{0.5\linewidth}
\centering
\includegraphics[width=3.3in]{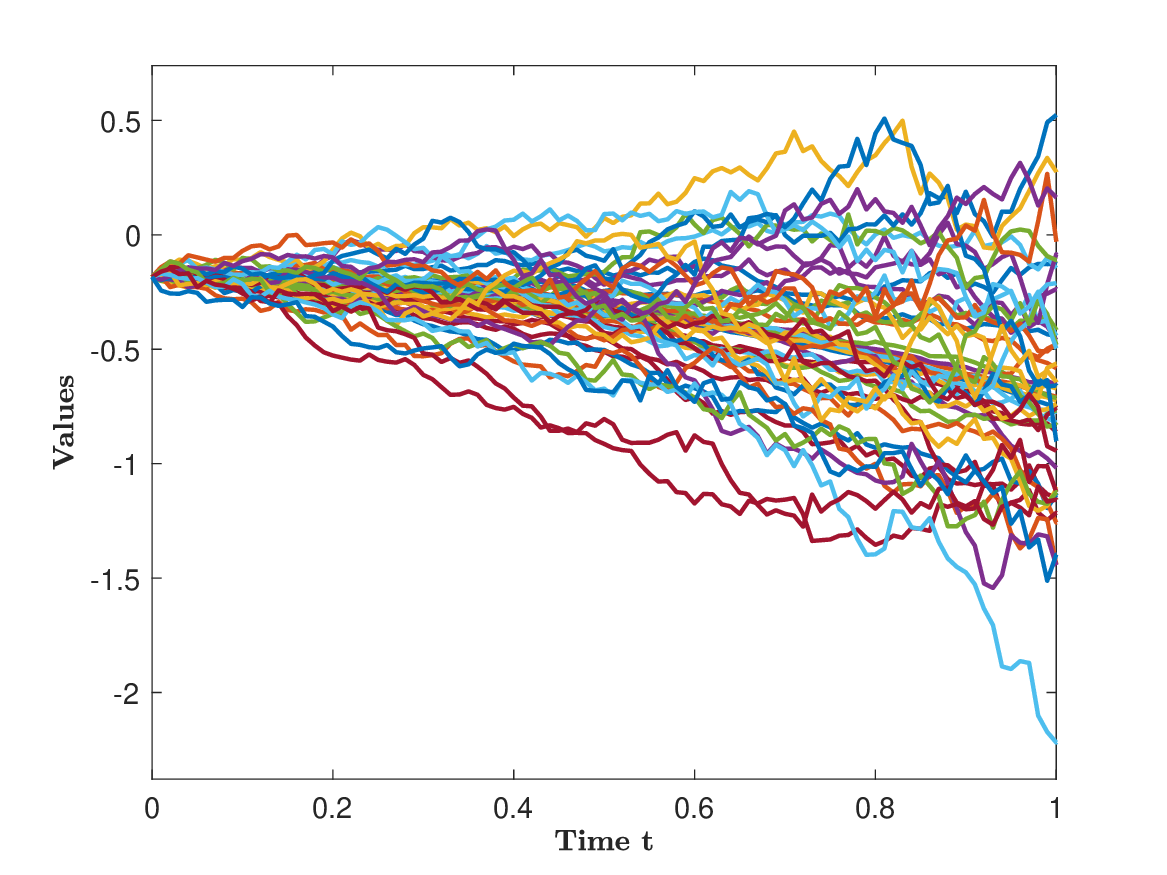}
\caption{The decentralized control strategies $\bar{u}_i(t)$.}
\label{fig4}
\end{minipage}
\end{figure}

\section*{Declaration of competing interest}
The authors declare that they have no known competing financial
interests or personal relationships that could have appeared
to influence the work reported in this paper.

\end{document}